\begin{document}

\title{Stationary stochastic Navier-Stokes on the plane at and above criticality}
\author{G. Cannizzaro and J. Kiedrowski}

\institute{University of Warwick, CV4 7AL, UK\\
	\email{\mbox{giuseppe.cannizzaro@warwick.ac.uk, j.kiedrowski@warwick.ac.uk}}}

\maketitle
\begin{abstract}
In the present paper, we study the fractional incompressible Stochastic Navier-Stokes equation on $\R^2$, 
formally defined as
\begin{equation}\label{e:snsabs}
\partial_t v = -\tfrac12 (-\Delta)^\theta v - \lambda v \cdot \grad v + \grad p -  \pgrad (-\Delta)^{\frac{\theta-1}{2}} \xi, \qquad \grad \cdot v = 0 \, ,
\end{equation}
where $\theta\in(0,1]$, $\xi$ is the space-time white noise on $\R_+\times\R^2$ and $\lambda$ is the coupling constant. 
For any value of $\theta$ the previous equation is ill-posed due to the singularity of the noise, and is 
critical for $\theta=1$ and 
supercritical for $\theta\in(0,1)$. For $\theta=1$, we prove that the weak coupling regime for the equation, 
i.e. regularisation at scale $N$ and coupling constant $\lambda=\hat\lambda/\sqrt{\log N}$, 
is meaningful in that the sequence $\{v^N\}_N$ of regularised solutions is tight and 
the nonlinearity does not vanish as $N\to\infty$. Instead, for $\theta\in(0,1)$ 
we show that the large scale behaviour of $v$ is trivial, as 
the nonlinearity vanishes and $v$ is simply converges to the solution of~\eqref{e:snsabs} 
with $\lambda=0$. 
%
%
%
%
\end{abstract}

\setcounter{tocdepth}{1}       

\section{Introduction}

The incompressible Navier-Stokes equation is a partial differential equation (PDE) describing the motion of 
an incompressible fluid subject to an external forcing. It is given by 
\begin{equ}[e:ns]
	\partial_t v = \tfrac12 \Delta v - \hat\lambda v \cdot \grad v + \grad p -  f\,,\qquad\nabla\cdot v=0\,,
\end{equ}
where $v=v(t,x)$ is the velocity of the fluid at $(t,x)\in\R_+\times \R^d$, 
$\hat\lambda\in\R$ is the {\it coupling constant} which tunes the strength of the nonlinearity,
$p$ is the pressure, $f$ the forcing and the second equation is the incompressibility condition. 
For $f$ a random noise (which will be the case throughout the paper), we will refer to 
the above as to the Stochastic Navier-Stokes (SNS) equation. 

The SNS equation has been studied under a variety of assumptions on $f$. Most of the literature focuses 
on the case of trace-class noise, for which existence, uniqueness of solutions and ergodicity 
were proved (see e.g.~\cite{FG, DaD, HM, FR, RZZ} and the references therein). 
The case of even rougher noises, e.g. space-time white noise and its derivatives, which is relevant 
in the description of motion of turbulent fluids~\cite{mikuleviciusStochasticNavierStokes2004}, 
was first considered in $d=2$ in~\cite{DaD2}, 
and later, thanks to the theory of Regularity Structures~\cite{Hai} and the paracontrolled calculus approach~\cite{GIP}, 
in dimension three~\cite{ZZ}. 

In the present work, we focus on dimension $d=2$ and consider the fractional stochastic Navier-Stokes equation 
driven by a conservative noise, which formally reads
\begin{equ}[e:sns]
	\partial_t v = -\tfrac12 (-\Delta)^\theta v - \hat\lambda v \cdot \grad v + \grad p -  \pgrad (-\Delta)^{\frac{\theta-1}{2}} \wn\,, \qquad \grad \cdot v = 0 \, .
\end{equ}
Here, $\theta$ is a strictly positive parameter, $(-\Delta)^\theta$ is the usual fractional laplacian 
(see~\eqref{e:FracLaplacian} below for the definition of its Laplace transform),  
$\pgrad  \eqdef (\partial_2 ,- \partial_1  )$ and $\wn$ is a space-time white noise on $\R_+\times\R^2$, 
i.e. a Gaussian process whose covariance is given by 
\begin{equ}[e:CovWN]
	\Ex [\wn(\phi) \wn(\psi) ] = \qvar{\phi , \psi }_{L^2 (\R_+ \times \R^2 ) }\,, \qquad \forall \,\phi, \psi \in L^2 (\R_+ \times \R^2)\, .
\end{equ}
The choice of the forcing $f=\pgrad (-\Delta)^{\frac{\theta-1}{2}} \wn$ in~\eqref{e:sns} ensures that, 
at least formally, the spatial white noise on $\R^2$, i.e. the Gaussian process 
whose covariance is that in~\eqref{e:CovWN} but 
with $\R^2$-valued square-integrable $\phi,\,\psi$, is invariant for the dynamics. 

A rigorous analysis of~\eqref{e:sns} has so far only been carried out for 
$\theta>1$, which in the language of~\cite[Ch. 8]{Hai}, corresponds to the so-called
{\it subcritical} regime - in~\cite{gubinelliRegularizationNoiseStochastic2013}, the authors 
proved existence of stationary solutions while uniqueness 
was established in~\cite{Gubinelli.TurraHyperviscous2020}. 
The goal of the present paper is instead to study the large-scale behaviour of the 
fractional SNS in the {\it critical} and {\it supercritical} cases, i.e. $\theta=1$ and $\theta\in(0,1)$ respectively,  
for which not only the classical stochastic calculus tools but also 
the pathwise theories of Regularity Structures~\cite{Hai} and paracontrolled calculus~\cite{GIP} 
are not applicable. 

To motivate our results, let us first consider the vorticity formulation of~\eqref{e:sns}.  
Setting $\omega\eqdef \grad^{\perp} \cdot v$, $\omega$ solves 
\begin{equ}[e:vsns]
	\partial_t \omega = -\tfrac12 (-\Delta)^\theta \omega - \hat\lambda\, (K\ast \w) \cdot \grad \omega -  (-\Delta)^{\frac{\theta+1}{2}} \wn\,, 
\end{equ}
where $K$ is the Biot-Savart kernel on $\R^2$ given by 
\begin{equ}[e:biotsavart]
	K(x) \eqdef \frac{1}{ \iota} \int_{\R^2} \frac{y^{\perp}}{ |y|^2 } e^{-\iota y \cdot x}\dd y\, ,
\end{equ}
for $y^{\perp} \eqdef (y_2, -y_1)$. Note that the velocity $v$ can be fully recovered 
from the vorticity $\w$ via $v=K\ast \omega$, so that~\eqref{e:sns} and~\eqref{e:vsns} are indeed equivalent. 

Due to the roughness of the noise, 
as written~\eqref{e:vsns} is purely formal for any value of $\theta\in(0,1]$. 
Therefore, in order to work with a well-defined object, we first regularise the equation. 
Let $\rho^1$ be a smooth spatial mollifier, the superscript $1$ representing the scale of the regularisation,
and consider the regularised vorticity equation 
\begin{equs}[e:vsnsNTsup]
	\partial_t \w^{1} = -\tfrac{1}{2}\D{\theta} \w^{1} -  \hat\lambda \grad^{\perp} \cdot \rho^1 \ast \left( (K\ast (\rho^1 \ast w)) \cdot \grad (\rho^1 \ast w) \right)   + \D{\frac{1+\theta}{2}} \xi \, .
\end{equs}
Since we are interested in the large-scale behaviour of~\eqref{e:vsns}, we rescale 
$\w^1$ according to
\begin{equ}[e:rescaling]
	\w^{N}(t,x) \eqdef N^{2} \w^1 ( t{N^{2\theta}} , x{N}) \, ,
\end{equ}
so that $\wN$ solves
\begin{equs}[e:vsnsNTsupintro]
	\partial_t \w^N = -\tfrac{1}{2}\D{\theta} \w^N -  \hat\lambda N^{2\theta-2} \nnlin [\w^N]   + \D{\frac{1+\theta}{2}} \xi \, ,
\end{equs}
and the nonlinearity $\nnlin$ is defined according to 
\begin{equs}[e:SNSnonlin]
	\nnlin[\w] \eqdef \div \mol \ast \left( (K\ast (\mol \ast \w)) (\mol \ast \w) \right)\, .
\end{equs}
where $\mol(\cdot)\eqdef N^{2}\rho(N^2\cdot)$ and we used $\grad \cdot v =0$. 

Note that as an effect of the scaling~\eqref{e:rescaling}, the coupling constant $\hat\lambda$ gains an 
$N$-dependent factor which, for large $N$, is order $1$ for $\theta=1$, i.e. in the critical regime, 
while it vanishes polynomially for $\theta\in(0,1)$, which instead is the supercritical regime. 
The goal of the present paper is twofold. For $\theta\in(0,1)$, 
we will show that the nonlinearity {\it simply goes to $0$} 
and that the equation trivialises, in the sense that $\wN$ converges to the solution 
of the original fractional stochastic heat equation obtained by setting $\hat\lambda=0$ in~\eqref{e:vsnsNTsupintro}. 
At criticality, i.e. $\theta=1$, instead the situation is more subtle. 
Logarithmic corrections due to the nonlinear term are to be expected (see~\cite{WAG,LRY} 
and~\cite{cannizzaro2021stationary, cannizzaro2021sqrtlog} for other models in the same universality class) 
and need to be taken into account. In the present setting, we will do so by imposing that 
the coupling constant vanishes at a suitable logarithmic order (see~\eqref{e:CouplingConstant}). 
We will then show that this is 
indeed meaningful since on the one hand subsequential limits for $\wN$ exist 
and on the other the nonlinear term does not vanish but is uniformly (in $N$) of order $1$. 
 
Before delving into the details, let us state assumptions, scalings and results more precisely. 
To unify notations, for $N\in\N$ let $\wN$ be the solution of 
\begin{equ}[e:vsnsN]
	\partial_t \wN =-\tfrac{1}{2}\D{\theta} \w^N - \l \nnlin [\w^N]   + \D{\frac{1+\theta}{2}} \xi  \, ,\qquad \w(0,\cdot)=\w_0(\cdot)
\end{equ}
where $\w_0$ is the initial condition, the value of $\l$ depends on both $N$ and $\theta$ via
\begin{equ}[e:CouplingConstant]
\l\eqdef
\begin{cases}
\frac{\hat\lambda}{ \sqrt{\log N}}\,,& \text{for $\theta=1$}\\
\hat\lambda N^{2\theta-2}\,,& \text{for $\theta\in(0,1)$,}
\end{cases}
\end{equ}
$\nnlin$ is defined according to~\eqref{e:SNSnonlin} with $\mol$ satisfying 
the following assumption. 

\begin{assumption}\label{a:mol}
For all $N\in\N$, $\mol$ is a radially symmetric smooth function such that $\|\mol\|_{L^1 (\R^2)}=1$ 
and whose Fourier transform $\molf$ is compactly supported on $\{k :1/N < |k | < N\}$. Furthermore, there 
exists a constant $c_\rho>0$ such that
\begin{equs}[ass:rho]
  |\molf(k)| \geq c_{\rho}, \quad \forall k \in \{k :2/N < |k | < N/2 \}\, .
\end{equs}
We also define $\mol_y (\cdot) \eqdef \mol (\cdot -y)$. 
\end{assumption}

\begin{remark}\label{rem:a}
Assumption~\ref{a:mol} guarantees that for every $N$, $\nnlin[\w]$ is smooth even if $\w$ is 
merely a distribution. In particular, $\molf$ is assumed to be vanishing in a neighbourhood of $0$ 
in order to avoid problems arising from  
the singularity of the Fourier transform Biot-Savart kernel $K$~\eqref{e:biotsavart} on $\R^2$ at the origin.
\end{remark}

Our first result concerns existence, markovianity and stationarity of the regularised vorticity equation~\eqref{e:vsnsN} 
for fixed $N$ and any value of $\theta\in(0,1]$. 

\begin{theorem}\label{thm:introSNSstation}
Let $\theta\in(0,1]$. For any $T>0$ and $N\in\N$ fixed,~\eqref{e:vsnsN} 
admits a weak solution $\wN$ in $C([0,T],\cS'(\R^2))$, which is a strong Markov process and 
has as invariant measure the Gaussian field $\mu$ whose covariance is 
\begin{equ}[e:muintro]
\E[\mu(\phi)\mu(\psi)]\eqdef \qvar{\phi,\psi}_{\dot H^1(\R^2)}\,,\qquad \phi,\,\psi\in \dot H^1(\R^2)\,.
\end{equ}
(For the definition of the space $\dot H^s(\R^2)$, $s\in\R$, see~\eqref{e:Hsnorm} below.)
\end{theorem}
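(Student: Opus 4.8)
The plan is to prove existence of a stationary weak solution via a Galerkin-type approximation combined with a compactness/tightness argument, using the Gaussian field $\mu$ as the candidate invariant measure. First I would verify that $\mu$ is formally invariant for~\eqref{e:vsnsN} by checking the generator. The equation is of the form $\partial_t \wN = -\tfrac12 \D{\theta}\wN - \l\nnlin[\wN] + \D{\frac{1+\theta}{2}}\xi$, so its (formal) generator splits as $\mathcal{L} = \mathcal{L}_0 + \mathcal{A}$, where $\mathcal{L}_0$ is the linear Ornstein–Uhlenbeck part associated with the fractional stochastic heat equation and $\mathcal{A}$ is the first-order transport operator coming from the nonlinearity $\nnlin$. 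The key structural fact is that $\mu$, the $\dot H^1$-Gaussian field, is reversible for $\mathcal{L}_0$ because the drift $-\tfrac12\D{\theta}$ and the noise $\D{\frac{1+\theta}{2}}\xi$ satisfy the fluctuation–dissipation balance: the covariance $\D{-1}$ of $\mu$ (i.e.\ the $\dot H^1$ inner product corresponds to multiplier $|k|^2$, hence covariance operator with symbol $|k|^{-2}$) is exactly tuned so that $\tfrac12\D{\theta}$ acting against $\D{-1}$ reproduces the noise intensity $\D{\theta+1}$. I would then show that the nonlinear transport term $\mathcal{A}$ annihilates $\mu$, i.e.\ it is antisymmetric in $L^2(\mu)$; this is the hallmark of these Burgers/Navier–Stokes-type models and follows from the incompressibility ($\grad\cdot v = 0$) together with the skew-symmetric structure of $K$, so that the nonlinearity preserves the Gaussian energy and contributes no net drift to the invariant measure.

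Next, to turn this formal statement into a genuine weak solution I would introduce a sequence of fully regularised/truncated approximations $\wN_n$ — for instance by a Fourier-space Galerkin truncation projecting onto finitely many modes $|k|\le n$ — for which~\eqref{e:vsnsN} becomes a finite-dimensional SDE with smooth coefficients (this is where Assumption~\ref{a:mol} is used: the compact Fourier support and the lower bound make $\nnlin$ a smooth, well-behaved map even on distributions). For each such finite system, standard SDE theory yields a unique global strong solution that is a strong Markov process, and the projected Gaussian measure $\mu_n$ is exactly invariant (and the process is stationary if started from $\mu_n$) by the same generator computation as above, now rigorous in finite dimensions. I would start each $\wN_n$ from $\mu_n$ so that the approximating processes are stationary by construction, which immediately gives uniform-in-$n$ bounds: $\E\|\wN_n(t)\|^2_{\dot H^{-s}}$ is controlled by the moments of $\mu$ for suitable $s$, uniformly in $t$ and $n$.

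I would then establish tightness of $\{\wN_n\}_n$ in $C([0,T],\cS'(\R^2))$. The spatial regularity is controlled by the stationarity bound (moments under $\mu$ in an appropriate negative Sobolev/weighted space), while temporal equicontinuity follows from a Kolmogorov-type estimate on time increments of $\wN_n(\phi)$ for fixed test functions $\phi\in\cS(\R^2)$: writing the mild or integral form, the linear part gives Hölder-in-time control via the fractional heat semigroup, the stochastic integral is handled by Gaussian/Burkholder estimates, and the nonlinear drift is bounded uniformly using the stationarity together with the smoothing from $\mol$. By Prokhorov and a diagonal argument over a countable dense set of test functions, I extract a subsequential limit $\wN$ in $C([0,T],\cS'(\R^2))$. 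Passing to the limit in the weak (martingale-problem) formulation identifies $\wN$ as a solution of~\eqref{e:vsnsN}; the limit inherits stationarity, hence $\mu$ is invariant, and the strong Markov property is obtained from uniqueness of the martingale problem or directly from the approximating Markov structure.

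The main obstacle is the limit passage in the nonlinear term $\l\nnlin[\wN_n]$ and the identification of the limit as a solution of the martingale problem. Although $\nnlin$ is smooth for fixed $N$ thanks to Assumption~\ref{a:mol}, it is quadratic, so convergence in distribution of $\wN_n$ alone does not immediately yield convergence of $\nnlin[\wN_n]$; one needs the uniform stationarity estimates to control the second-chaos component and to show that no mass escapes, i.e.\ that $\E[\nnlin[\wN_n](\phi)]$ and its second moments are uniformly integrable and converge. I expect this to require a careful Wick/chaos decomposition of the nonlinearity under $\mu$ together with the Fourier-support cutoff from $\mol$ to guarantee that the relevant kernels are in $L^2$ uniformly in $n$, so that the quadratic functional is continuous along the convergent subsequence. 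Once this continuity is secured, verifying that $\mu$ is invariant reduces to checking that the antisymmetry of $\mathcal{A}$ survives in the limit, which is a direct consequence of the construction.
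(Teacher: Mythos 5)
Your overall architecture --- an approximating sequence for which invariance of the Gaussian field can be checked via a generator computation (fluctuation--dissipation for the Ornstein--Uhlenbeck part, antisymmetry of the transport part coming from incompressibility and the conservation of the $\dot H^{-1}$ norm by the nonlinearity), followed by tightness from stationarity and identification of the limit through a martingale problem --- is exactly the skeleton of the paper's proof. The structural facts you single out are the content of Lemma~\ref{l:H-1invariance}, Proposition~\ref{p:SNStorinvar} and Proposition~\ref{p:SNSstraightoperators}, and your treatment of the limit passage in the nonlinearity (using that Assumption~\ref{a:mol} makes $K\ast\rho^N_\cdot$ a Schwartz function, plus uniform Gaussian moment bounds) matches the argument in the proof of Theorem~\ref{thm:InvR2}.

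However, there is a genuine gap in your choice of approximation. On $\R^2$ a Fourier--Galerkin truncation to $\{|k|\le n\}$ does \emph{not} produce a finite-dimensional SDE: the frequency variable is a continuum, so the truncated system is still infinite-dimensional, and the step ``standard SDE theory yields a unique global strong solution \dots and the projected Gaussian measure $\mu_n$ is exactly invariant \dots by the same generator computation, now rigorous in finite dimensions'' does not go through. More fundamentally, the verification of invariance via the generator (Echeverr\`ia's criterion) is not available for measures on $\cS'(\R^2)$ --- this is precisely the obstruction the paper flags, citing \cite[Remark 3.1-(2)]{Funaki.QuastelKPZ2015}. The missing idea is to compactify \emph{space} rather than frequency: the paper approximates by the periodic problem on $\T^2_M$, where the mollifier's compact Fourier support decouples finitely many low modes (a genuine finite-dimensional SDE preserving the $\dot H^{-1}$ norm) from the remaining Ornstein--Uhlenbeck modes, so that Echeverr\`ia's criterion applies and $\mu^M$ is rigorously invariant (Proposition~\ref{p:SNStorinvar}); one then sends the torus size $M\to\infty$ with $N$ fixed, using the It\^o trick for uniform-in-$M$ estimates, Mitoma's and Kolmogorov's criteria for tightness, and the martingale problem of Definition~\ref{d:martprob} to identify the limit and transfer invariance. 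Without replacing your Galerkin scheme by such a periodization (or an equivalent device that reduces the invariance check to a setting where Echeverr\`ia applies), the central claim that $\mu$ is invariant remains unproved.
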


The previous theorem is shown in Section~\ref{s:invmeas} and Theorem~\ref{thm:InvR2}. 
As noted in~\cite[Remark 3.1-(2)]{Funaki.QuastelKPZ2015}, 
since we are working on the full space $\R^2$, invariance of the field 
$\mu$ in~\eqref{e:muintro} cannot be obtained via the classical Echeverria's 
criterion~\cite{echeverriaCriterionInvariantMeasures1982}. 
Instead, we suitably modify a strategy first appeared in~\cite{Funaki.QuastelKPZ2015} 
which consists of approximating $\wN$ with the sequence of periodic solutions 
to~\eqref{e:vsnsN} on a large torus of size $M$ for which invariance can be easily established.  

From now on we will only work with the stationary solution to~\eqref{e:vsnsN}. 
In the next theorem, we study the critical case of $\theta=1$ and prove that, 
if $\lambda_{N,1}$ is chosen according to~\eqref{e:CouplingConstant}, then the sequence 
$\{\wN\}_N$ is tight and that the subsequential limits of the non-linearity do not vanish. 

\begin{theorem}\label{thm:Main}
For $N\in\N$, let $\wN$ be the stationary solution of~\eqref{e:vsnsN} with $\theta=1$, $\lambda_{N,1}$ defined 
according to~\eqref{e:CouplingConstant} for $\hat\lambda>0$, $\mol$ satisfying Assumption~\ref{a:mol} 
and initial condition $\w_0=\mu$, for $\mu$ the Gaussian field 
with covariance as in~\eqref{e:muintro}. For $\phi\in\cS(\R^2)$ and $t\geq 0$,  
set 
\begin{equ}[e:IntNonlin]
\cB^N_t(\phi)\eqdef \lambda_{N,1}\int_0^t\nnlin_t[\w^N_s](\phi)\dd s\,.
\end{equ}
Then, for any $T>0$, the law of the couple $(\w^N, \cB^N)$ is tight in $C([0,T],\cS'(\R^2))$. 
Moreover, letting $(\w,\cB)$ be any limit point, we have that 
there exists a constant $C>1$ such that 
for all $\phi\in\cS(\R^2)$ and $\kappa>0$
\begin{equ}[e:UpperLowerB]
C^{-1}\frac{\hat\lambda^2}{\la^2}\| \phi \|_{\dot{H}^2 (\R^2)}^2\leq \int_0^{\infty} e^{-\la t} \Ex \Big[ \Big| \cB_t(\phi) \Big|^2 \Big] \dd t \leq C\frac{\hat\lambda^2}{\la^2} \| \phi \|_{\dot{H}^2 (\R^2)}^2\,.
\end{equ} 
\end{theorem}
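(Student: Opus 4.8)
The plan is to recast~\eqref{e:UpperLowerB} as a resolvent estimate for the generator of the stationary process and to read off tightness from the same bounds. Denote by $\mathcal{L}_N$ the generator of $\w^N$ on $L^2(\mu)$ and decompose it as $\mathcal{L}_N=\mathcal{S}+\mathcal{A}_N$, where $\mathcal{S}$ is the symmetric Ornstein--Uhlenbeck generator associated with $-\tfrac12\D{1}$ and the Gaussian law $\mu$ (and is therefore $N$-independent), while $\mathcal{A}_N$ is the antisymmetric part generated by the nonlinearity $\nnlin$ in~\eqref{e:vsnsN}. Since $\mu$ is Gaussian, $L^2(\mu)$ splits into Wiener chaoses: $\mathcal{S}$ acts diagonally on the $n$-th chaos as multiplication by $-\tfrac12\sum_{i=1}^n|k_i|^2$ in the Fourier variables, whereas $\mathcal{A}_N$, being a first-order derivation built from the quadratic $\nnlin$, maps the $n$-th chaos into the $(n\pm1)$-th ones. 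Setting $V_N\eqdef\l\,\nnlin[\,\cdot\,](\phi)$, the functional $V_N$ lives in the second chaos and, by~\eqref{e:SNSnonlin} and~\eqref{e:biotsavart}, has an explicit Fourier kernel carrying a factor $\hat\phi(k_1+k_2)$, the Biot--Savart symbol and the cut-offs $\molf$.

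First, the Green--Kubo reduction. For the stationary process one has $\E[|\cB^N_t(\phi)|^2]=2\int_0^t(t-u)\langle V_N,P_uV_N\rangle_{L^2(\mu)}\dd u$ with $P_u$ the semigroup, and taking the Laplace transform yields, for every $\la>0$,
\[
\int_0^\infty e^{-\la t}\,\E\big[|\cB^N_t(\phi)|^2\big]\dd t=\frac{2}{\la^2}\,\big\langle V_N,(\la-\mathcal{L}_N)^{-1}V_N\big\rangle_{L^2(\mu)}\,.
\]
Hence~\eqref{e:UpperLowerB} is equivalent to the two-sided bound $\langle V_N,(\la-\mathcal{L}_N)^{-1}V_N\rangle\asymp\hat\lambda^2\|\phi\|_{\dot H^2(\R^2)}^2$, uniform in $N$ and $\la$. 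For the upper bound, antisymmetry of $\mathcal{A}_N$ gives the elementary inequality $\langle V_N,(\la-\mathcal{L}_N)^{-1}V_N\rangle\le\langle V_N,(\la-\mathcal{S})^{-1}V_N\rangle$, which reduces matters to the second chaos, where $(\la-\mathcal{S})^{-1}$ acts as multiplication by $(\la+\tfrac12(|k_1|^2+|k_2|^2))^{-1}$. Inserting the kernel of $V_N$ and using Assumption~\ref{a:mol} to localise frequencies to $\{1/N<|k|<N\}$, the resulting Fourier integral is logarithmically divergent of size $\asymp(\log N)\,\|\phi\|_{\dot H^2(\R^2)}^2$, and the prefactor $\l^2=\hat\lambda^2/\log N$ compensates it exactly, giving the upper bound uniformly in $\la$ (the high frequencies carrying the divergence dominate the resolvent shift $\la$). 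The very same estimate $\langle V_N,(-\mathcal{S})^{-1}V_N\rangle\lesssim\hat\lambda^2\|\phi\|_{\dot H^2(\R^2)}^2$ feeds the It\^o trick / Kipnis--Varadhan inequality to give $\E[|\cB^N_t(\phi)-\cB^N_s(\phi)|^2]\lesssim|t-s|\,\hat\lambda^2\|\phi\|_{\dot H^2(\R^2)}^2$; upgrading the time exponent by Gaussian hypercontractivity (legitimate since $V_N$ sits in a fixed chaos) and summing over a basis of a suitable $H^{-s}(\R^2)$, the Kolmogorov criterion yields tightness of $\cB^N$, and then of $(\w^N,\cB^N)$ through~\eqref{e:vsnsN}.

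The lower bound is the crux and the main obstacle, as it encodes non-triviality of the nonlinearity. Here I would use the variational formula
\[
\langle V_N,(\la-\mathcal{L}_N)^{-1}V_N\rangle=\sup_{f}\Big\{2\langle V_N,f\rangle-\langle f,(\la-\mathcal{S})f\rangle-\langle\mathcal{A}_Nf,(\la-\mathcal{S})^{-1}\mathcal{A}_Nf\rangle\Big\}\,,
\]
and test it with $f=(\la-\mathcal{S})^{-1}V_N$, which produces
\[
\langle V_N,(\la-\mathcal{L}_N)^{-1}V_N\rangle\ge\langle V_N,(\la-\mathcal{S})^{-1}V_N\rangle-\big\langle g_N,(\la-\mathcal{S})^{-1}g_N\big\rangle\,,\qquad g_N\eqdef\mathcal{A}_N(\la-\mathcal{S})^{-1}V_N\,.
\]
The first term is the order-$\hat\lambda^2$ quantity identified above; the crucial point is to control the correction. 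Since $V_N$ lies in the second chaos, $g_N$ has components only in the first and third chaos, and $\langle g_N,(\la-\mathcal{S})^{-1}g_N\rangle$ is a higher-order Fourier integral. The key estimate, carried out through the frequency localisation of Assumption~\ref{a:mol} and the explicit chaos kernels, is that after the $\l^2$ prefactor this correction is no larger than a fixed fraction of the leading term, so that a positive multiple of $\hat\lambda^2\|\phi\|_{\dot H^2(\R^2)}^2$ survives uniformly in $N$ and $\la$; I expect this diagrammatic bound to be the most delicate part of the argument. Finally, both bounds transfer to an arbitrary limit point $(\w,\cB)$ by combining weak convergence in $C([0,T],\cS'(\R^2))$ with the uniform moment bounds above (uniform integrability for the upper bound, Fatou for the lower bound), the Laplace transform passing to the limit by dominated convergence.
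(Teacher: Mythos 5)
Your architecture (Green--Kubo resolvent identity, It\^o trick for the upper bound and tightness, variational formula for the lower bound) coincides with the paper's, and the upper-bound/tightness half is sound modulo standard details. The genuine gap is in the lower bound, and it is quantitative, not cosmetic. You test the variational formula with $f=(\la-\cL_0)^{-1}V_N$ and assert that the correction $\qvar{g_N,(\la-\cL_0)^{-1}g_N}$, $g_N=\ngena(\la-\cL_0)^{-1}V_N$, is ``no larger than a fixed fraction of the leading term''. Count powers of the coupling: $V_N\propto\l$ and $\ngena\propto\l$, so the leading term is of order $\l^2\log N=\hat\lambda^2$ while the correction is of order $\l^4(\log N)^2=\hat\lambda^4$. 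This is exactly what the operator bound of Lemma~\ref{l:OpBounds} delivers: the correction is $\leq C\,\qvar{(-\cL_0)f,f}\leq C\,\qvar{V_N,(\la-\cL_0)^{-1}V_N}$ with $C\asymp\hat\lambda^2$. Hence the ratio is not a fixed fraction below one; for $\hat\lambda$ large your lower bound is negative and the conclusion is vacuous, whereas the theorem is claimed for every $\hat\lambda>0$. The step can be repaired either by inserting a tunable amplitude $f=\alpha(\la-\cL_0)^{-1}V_N$ and optimising over $\alpha$, or, as the paper does, by first absorbing the operator bound into the quadratic form --- replacing $(\la-\cL_0)$ by $(\la-c\cL_0)$ with $c=1+C$ --- and only then choosing $\mathfrak{g}=\lambda_{N,1}(\la-c\cL_0)^{-1}\fn^N_\phi$ so that the error term vanishes identically. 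As written, the step fails.

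Two further items are asserted rather than proved, and both carry real content. First, the bound on $-\ngenam(\la-\cL_0)^{-1}\ngenap$ and $-\ngenap(\la-\cL_0)^{-1}\ngenam$ on $\fock_2$ (Lemma~\ref{l:OpBounds}) is precisely the ``diagrammatic bound'' you defer; without it neither your correction term nor the absorbed quadratic form is controlled, so it cannot be left as an expectation. Second, the lower bound $\qvar{V_N,(\la-\cL_0)^{-1}V_N}\gtrsim\hat\lambda^2\|\phi\|^2_{\dot{H}^2(\R^2)}$ does not follow from ``the integral is logarithmically divergent'': the symbol $\K{k_1,k_2}$ vanishes on a codimension-one set of angular configurations, so one must restrict to a frequency sector (the paper's $\CC^N_k$) on which it is bounded below and verify that this sector still produces the full $\log N$. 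Finally, the transfer to a limit point is stated backwards: Fatou (lower semicontinuity of second moments under weak convergence) handles the \emph{upper} bound, while the \emph{lower} bound requires actual convergence of $\Ex[|\cB^N_t(\phi)|^2]$, obtained from uniform integrability via the $p$-th moment bound \eqref{e:UpperB} with $p>2$ followed by dominated convergence in $t$.
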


The proof of the previous statement will occupy Sections~\ref{s:TightUp} and~\ref{s:lowerbound} and 
is based on techniques similar to those used in~\cite{Cannizzaro.etal2D2021}, which though 
works on a torus of fixed size instead of $\R^2$. 
For both tightness and the upper bound in~\eqref{e:UpperLowerB},
we exploit the so-called It\^o's trick introduced in~\cite{gubinelliRegularizationNoiseStochastic2013} 
(see Theorem~\ref{t:tightvsnsN} and Remark~\ref{rem:UB}), 
while the lower bound is achieved via a variational problem and suitable operator bounds 
on the generator of $\wN$ (see Proposition~\ref{p:lowerboundsns} and Lemma~\ref{l:OpBounds}). 

\begin{remark}
The previous theorem suggests that, under the scaling determined by 
the choice of the coupling constant in~\eqref{e:CouplingConstant} (the so-called weak coupling scaling), 
the equation~\eqref{e:vsnsN} is 
{\it diffusive} at large scales. We believe that,  following the approach developed in~\cite{CETweak} 
it should be possible to show that $\wN$ indeed converges in law and that the limit is 
a stochastic heat equation with renormalised $\hat\lambda$-dependent coefficients. 
\end{remark}

At last, we consider the supercritical regime $\theta\in(0,1)$. As previously anticipated, 
in this case the nonlinearity simply converges to $0$ so that $\wN$ 
trivialises. 

\begin{theorem}\label{thm:theta<1}
For $N\in\N$ and $\theta\in(0,1)$, 
let $\wN$ be the stationary solution of~\eqref{e:vsnsN} with $\l$ defined 
according to~\eqref{e:CouplingConstant} for $\hat\lambda>0$, $\mol$ satisfying Assumption~\ref{a:mol} 
and initial condition $\w_0=\mu$, for $\mu$ the Gaussian field 
with covariance as in~\eqref{e:muintro}. Then, the sequence $\{\wN\}_N$ converges as $N\to\infty$ 
to the unique solution of the fractional stochastic heat equation
\begin{equ}[e:fracSHE]
\partial_t \omega=-\tfrac12(-\Delta)^\theta \omega +(-\Delta)^{\frac{1+\theta}{2}}\xi\,,\qquad \w_0=\mu\,.
\end{equ}
\end{theorem}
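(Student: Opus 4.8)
The plan is to show that the only $N$-dependent contribution to the weak formulation of~\eqref{e:vsnsN}, namely the integrated nonlinearity $\cB^N$ of~\eqref{e:IntNonlin}, vanishes as $N\to\infty$, so that $\wN$ collapses onto the solution of the linear equation~\eqref{e:fracSHE}. Testing~\eqref{e:vsnsN} against $\phi\in\cS(\R^2)$ and writing the stochastic convolution as a martingale $\mathcal M_t(\phi)$, the stationary solution satisfies
\begin{equ}
\w^N_t(\phi)=\mu(\phi)-\tfrac12\int_0^t\w^N_s(\D{\theta}\phi)\dd s-\cB^N_t(\phi)+\mathcal M_t(\phi)\,.
\end{equ}
Since all the $\wN$ are driven by the same noise and started from the same field $\mu$, both $\mathcal M$ and the linear drift are $N$-independent, and the analogous identity without $\cB^N$ characterises the unique stationary solution $\w$ of~\eqref{e:fracSHE}. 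Subtracting the two identities, the error $e^N\eqdef\w^N-\w$ solves a linear equation driven by $-\cB^N$; by Duhamel and the contractivity of the fractional heat semigroup, $e^N$ is a bounded linear functional of $\cB^N$, so it suffices to prove that $\cB^N\to0$. Tightness of $\{\wN\}$ and identification of every limit point with $\w$ then follow at once, and uniqueness for~\eqref{e:fracSHE} upgrades this to convergence of the full sequence.

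The core estimate is therefore a quantitative control on $\cB^N$. As for the upper bound in Theorem~\ref{thm:Main}, I would invoke It\^o's trick (Theorem~\ref{t:tightvsnsN} and Remark~\ref{rem:UB}), which bounds the additive functional of the stationary Markov process $\wN$ by the resolvent norm of its integrand relative to the symmetric part $\mathcal L_0=-\tfrac12\D{\theta}$ of the generator, i.e. the fractional Ornstein--Uhlenbeck operator with invariant measure $\mu$. This yields, for every $T>0$ and $\phi\in\cS(\R^2)$,
\begin{equ}
\Ex\Big[\sup_{t\leq T}\big|\cB^N_t(\phi)\big|^2\Big]\lesssim \l^2\,T\,\big\langle \nnlin[\cdot](\phi),\,(-\mathcal L_0)^{-1}\nnlin[\cdot](\phi)\big\rangle_{L^2(\mu)}\,,
\end{equ}
with $\l=\hat\lambda N^{2\theta-2}$ as in~\eqref{e:CouplingConstant}.

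It remains to estimate the resolvent norm. Since $\nnlin[\cdot](\phi)$ is quadratic in $\w$, it lives in the second Wiener chaos of $\mu$, and $(-\mathcal L_0)^{-1}$ acts on the Fourier kernel $f(k_1,k_2)$ of this functional by division by $\tfrac12(|k_1|^{2\theta}+|k_2|^{2\theta})$; recalling that $\mu$ has spectral density $|k|^2$, the resolvent norm equals a constant times
\begin{equ}
\int_{(\R^2)^2}\frac{|f(k_1,k_2)|^2\,|k_1|^2|k_2|^2}{|k_1|^{2\theta}+|k_2|^{2\theta}}\dd k_1\dd k_2\,.
\end{equ}
The kernel $f$ carries the factors $\molf(k_1)\molf(k_2)\molf(k_1+k_2)\hat\phi(k_1+k_2)$ together with the geometric factor from the Biot--Savart kernel; the decisive point is that, after symmetrisation in $(k_1,k_2)$, this factor is $(k_1^\perp\cdot k_2)\big(|k_1|^{-2}-|k_2|^{-2}\big)$, whose cancellation on the diagonal $k_2\approx -k_1$ (where $\hat\phi(k_1+k_2)$ is concentrated) is exactly what suppresses the otherwise divergent high-frequency contribution. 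Setting $p=k_1+k_2$ and $k=k_1$, a power count gives an integrand of order $|p|^4|\hat\phi(p)|^2|\molf(k)|^4|k|^{-2\theta}$, so that integrating the smooth, compactly concentrated $p$-variable produces $\|\phi\|_{\dot H^2(\R^2)}^2$, while the $k$-integral over the annulus $\{1/N<|k|<N\}$ from Assumption~\ref{a:mol} behaves like $\int_{1/N}^N r^{1-2\theta}\dd r\sim N^{2-2\theta}$ for $\theta<1$ (the same computation returning the $\log N$ of the critical case at $\theta=1$). Combining with the previous display,
\begin{equ}
\Ex\Big[\sup_{t\leq T}\big|\cB^N_t(\phi)\big|^2\Big]\lesssim \hat\lambda^2 N^{2(2\theta-2)}\,T\,N^{2-2\theta}\,\|\phi\|_{\dot H^2(\R^2)}^2=\hat\lambda^2\,T\,N^{2\theta-2}\,\|\phi\|_{\dot H^2(\R^2)}^2\,,
\end{equ}
which tends to $0$ since $\theta<1$, completing the argument.

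I expect the main obstacle to be this last resolvent estimate. Unlike a naive bound, which would produce a positive power of $N$ and force the wrong conclusion, one must exploit the full antisymmetric structure of the Biot--Savart nonlinearity and the cancellation $|k_1|^{-2}-|k_2|^{-2}$ near the diagonal; the singularity of $K$ at the origin is harmless thanks to the spectral support of $\molf$ away from $0$ (Assumption~\ref{a:mol} and Remark~\ref{rem:a}). A secondary technical point is to justify the applicability of It\^o's trick --- in particular that $\nnlin[\cdot](\phi)$ lies in the domain controlled by $\mathcal L_0$ --- which however parallels the critical analysis underlying Theorem~\ref{thm:Main}.
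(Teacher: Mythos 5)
Your proposal is correct and follows essentially the same route as the paper: the It\^o trick reduces the problem to a resolvent/energy estimate on the second-chaos kernel of $\nnlin[\cdot](\phi)$, which yields $\mathbf{E}[\sup_{s\le t}|\cB^N_s(\phi)|^2]\lesssim \hat\lambda^2 N^{2\theta-2}(t\vee t^{1/2})^2\|\phi\|_{\dot H^2}^2\to0$, and then tightness of $(\wN,\cB^N)$ plus uniqueness of the stationary weak solution of~\eqref{e:fracSHE} identifies all limit points (this is exactly Theorem~\ref{t:tightvsnsN}, estimate~\eqref{e:UBtheta}, combined with Section~\ref{s:triv}; the paper works through the Poisson equation $(1-\cL_\theta)H=\l\nnlin$ rather than $(-\cL_0)^{-1}$ directly, but that is a technical variant).

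Two remarks. First, your ``decisive point'' is a red herring: you claim that without the symmetrised cancellation $(k_1^\perp\cdot k_2)(|k_1|^{-2}-|k_2|^{-2})$ a naive bound ``would produce a positive power of $N$''. That is not so. The paper uses precisely the crude bound $|\mathcal K^N_{\ell,m}|\le\hat\rho^N_\ell\,|\ell+m|^2/(|\ell||m|)$ (coming from $|\ell^\perp\cdot(\ell+m)|\le|\ell|\,|\ell+m|$ and $|m\cdot(\ell+m)|\le|m|\,|\ell+m|$), which already yields $|k_1|^2|k_2|^2|\hat{\mathfrak n}^N_\phi(k_{1:2})|^2\lesssim|k|^4|\hat\phi(k)|^2(\hat\rho^N)^2$ and hence the integral $\int_{|k_1|\le N}(|k_1|^{2\theta}+|k_2|^{2\theta})^{-1}\dd k_1\sim N^{2-2\theta}$ — exactly your power count. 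The antisymmetric structure of the Biot--Savart nonlinearity is \emph{not} needed for the supercritical (nor the critical) upper bound; it enters only through Lemma~\ref{l:H-1invariance} to establish invariance of $\mu$. Second, your Duhamel/coupling step (``all the $\wN$ are driven by the same noise'') is not justified as stated, since the solutions are weak solutions obtained from a martingale problem and need not live on a common probability space; however, this step is superfluous, because your alternative argument — tightness of the pair, $\cB^N\to0$ in $L^p$, identification of limit points with the unique stationary weak solution of the linear equation — is self-contained and is what the paper does.
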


The proof of the previous theorem is given in Theorem~\ref{t:tightvsnsN} and Section~\ref{s:triv}. 
We believe that a similar type of statement will hold true also for other fractional equations, 
e.g. the fractional Anisotropic KPZ equation with $\theta<1$ or the one dimensional fractional KPZ equation 
with $\theta<1/2$ (see~\cite{KiedrowskiSPDES2021} for the first and~\cite{gubinelliRegularizationNoiseStochastic2013, gubinelliInfinitesimalGeneratorStochastic2018} 
for the other). 

\section*{Acknowledgements}
The authors would like to thank Mario Maurelli and Marco Romito 
for helpful discussions. 
G. C. gratefully acknowledges financial support via the EPSRC grant EP/S012524/1. 
\subsection*{Notations and function spaces}

For $M \in \N$ let  $\T_M^2$ be the two dimensional torus of side length $2\pi M$ and 
$\Z_M^2 \eqdef (\Z_0/M)^2$ where $\Z_0 \eqdef \Z \backslash \{ 0 \}$. 
Denote by $\{ e_k\}_{k \in \Z_M^2}$ the usual Fourier basis, i.e. 
$e_k (x) \eqdef \frac{1}{2\pi} e^{ik \cdot x}$, 
and for $\phi \in L^2(\T_M^2)$ let the Fourier transform of $\phi$ be
\begin{equ}
	\F_M(\phi)(k)  = \hat\phi (k) = \phi_k \eqdef \int_{\T_M^2} \phi (x) e_{-k}(x) \dd x\, ,
\end{equ}
so that, in particular, for all $x \in \T^2_M$ we have
\begin{equ}[e:RiemSum]
	\phi (x) = \frac{1}{M^2} \sum_{k \in \Z_M^2} \hat\phi(k) e_k (x)\, .
\end{equ}
When the space considered is clear, the subscript M may be omitted. 
The previous definitions straightforwardly translate to $\R^2$ by replacing the integral over the torus 
to the full space, the Riemann-sum to an integral and taking $k\in\R^2$. 

For $\theta\in \R$ and $T=\T^2_M$ or $\R^2$, 
we define the fractional Laplacian $(-\Delta)^{\theta}$ via its Fourier transform, i.e.
\begin{equ}[e:FracLaplacian]
	\mathcal{F} ( (-\Delta)^{\theta} u ) (z) = |z|^{2\theta} \mathcal{F}(u) (z)\, ,
\end{equ}
for $\phi \in L^2(T)$ and $z \in T$ for $\theta \geq 0$ and $z \in T \backslash \{ 0\}$ otherwise. 

We denote by $\mathcal{S} (\R^2)$, the classical space of Schwartz functions, i.e. infinitely differentiable functions 
whose derivatives of all orders decay at faster than any polynomial. 
Similarly to~\cite[Section 7]{Gubinelli.TurraHyperviscous2020}, 
for $s \in \R$, we say $\phi \colon (\R^2)^n \rightarrow \R$ is in the \textit{homogeneous Sobolev space} 
$(\dot{H}^s (\R^2))^{\otimes n}$, understood as a tensor product of Hilbert spaces, 
if there exists a tempered distribution $\tilde{\phi} \in \mathcal{S}^{\prime} ((\R^2)^n)$ such that
\begin{equs}
	\qvar{\phi,\psi}_{(\dot{H}^s (\R^2))^{\otimes n}} = \qvar{\tilde\phi,\psi}_{(\dot{H}^s (\R^2))^{\otimes n}}\, \forall \psi \in \mathcal{S}((\R^2)^n)\,.
\end{equs}
and 
\begin{equ}[e:Hsnorm]
	\| \tilde\phi \|^2_{(\dot{H}^s (\R^2))^{\otimes n} } \eqdef \int_{(\R^2)^n} \left( \prod_{i=1}^{n} |k_i|^{2s} \right) |\hat{\tilde\phi} (k_{1:n})|^2 \dd k_{1:n}<\infty
\end{equ}
where we introduced the notation $k_{1:n}\eqdef (k_1,\dots,k_n)$. 
Clearly, for $s\geq 0$, $\tilde\phi$ can be taken to be $\phi$ itself. 
The same conventions apply to $\dot{H}^s (\T_M^2)$, 
but in the definition of the norm the integral is replaced by a Riemann-sum (as in~\eqref{e:RiemSum}). 

For $s=1$, which will play an important role in what follows, we point out that the norm 
on $(\dot H^1(\R^2))^{\otimes n}$ can be equivalently written as
\begin{equ}
\| \phi \|^2_{(\dot{H}^1 (\R^2))^{\otimes n} }\eqdef\int_{(\R^2)^n} |\nabla \phi(x_{1:n})|^2\dd x_{1:n}\,.
\end{equ}


\subsection{Preliminaries on Wiener space analysis}\label{sec:Mall}

Let $(\Omega, \mathcal{F},\mathbb{P})$ be a complete probability space and 
$H$ be a separable Hilbert space with scalar product $\qvar{\cdot,\cdot} $. 
A stochastic process $\mu$ is called \textit{isonormal Gaussian process} 
(see~\cite[Definition 1.1.1]{nualartMalliavinCalculusRelated2006})
if $\{ \mu(h) : h \in H \} $ is a family of centred jointly Gaussian random variables 
with correlation $\E ( \mu(h) \mu(g) ) = \qvar{h,g}$. 
Given an isonormal Gaussian process $\mu$ on $H$ and $n \in \N$, 
we define the \textit{n-th homogeneous Wiener chaos} $\wc_n$ 
as the closed linear subspace of $L^2 (\mu) = L^2 (\Omega)$ 
generated by the random variables $H_n ( \mu(h) )$, for $h \in H$ of norm $1$, 
where $H_n$ is the $n$-th Hermite polynomial. 
For $m \neq n$, $\wc_n$ and $\wc_m$ are orthogonal
and, by \cite[Theorem 1.1.1]{nualartMalliavinCalculusRelated2006}, $L^2 (\mu) = \oplus_n \wc_n$. 

The isonormal Gaussian process $\mu$ we will be mainly working with is such that $H=\dot H^1(T)$, 
$T$ being either the $2$-dimensional torus $\T^2_M$ or $\R^2$, and has covariance  
\begin{equ}[e:mu]
\E[\mu(\phi)\mu(\psi)]\eqdef \qvar{\phi,\psi}_{\dot H^1(T)}\,,\qquad \phi,\,\psi\in \dot H^1(T)\,.
\end{equ}
Thanks to the results in~\cite[Chapter 1]{nualartMalliavinCalculusRelated2006}, 
there exists an isomorphism $I$ between the Fock space $\fock \eqdef \oplus_{n \geq 0 } \fock_n$ 
and $L^2 (\mu)$, where $\fock_n$ is the closure of $(\dot H^1 (T))^{\otimes n}$ 
with respect to the norm in~\eqref{e:Hsnorm}\footnote{Equivalently, $\fock_n$ is 
the space of functions in $\dot H^1 (T^n) $ which are symmetric with respect to permutations of variables}. 
For $n\in\N$, the projection $I_n$ of the isomorphism above to $\wc_n$ is itself an
isomorphism between $\fock_n$ and $\wc_n$ and is given by 
\begin{equ}
I_n(\otimes^n h)\eqdef n! H_n(\mu(h))\,,\qquad \text{for all $h\in \dot H^1(T)$ such that $\|h\|_{\dot H^1(T)}=1$.}
\end{equ}
By~\cite[Theorem 1.1.2]{nualartMalliavinCalculusRelated2006}, 
for every $F \in L^2 (\mu)$ there exists unique sequence of symmetric functions 
$\{ f_n \}_{n \geq 0} \in \fock$ such that $F = \sum_{n=0}^\infty I_n (f_n)$ and 
\begin{equ}[e:wcbreakdown]
\E[F^2] = \sum_{n=0}^\infty n! \| f_n\|_{\fock_n}^2\, .
\end{equ}
Since the Hilbert space on which $\mu$ is defined is $\dot H^1(T)$ (and not $L^2(T)$ as in~\cite{Cannizzaro.etal2D2021}),
the isomorphism $I$ must be handled with care and the results in~\cite[Ch. 1.1.2]{nualartMalliavinCalculusRelated2006} 
applied accordingly. In particular, in the present context~\cite[Proposition 1.1.3]{nualartMalliavinCalculusRelated2006} 
translates as follows. Let $f\in\fock_n$ and $g\in \fock_m$, then 
\begin{equ}[e:ProdI]
I_n(f) I_m(g)=\sum_{p=0}^{n\wedge m} p! {n \choose p}{m \choose p}I_{m+n-2p}(f\otimes_{p}g)
\end{equ}
where 
\begin{equ}[e:contraction]
f\otimes_{p}g(x_{1:m+n-2p})\eqdef\int_{T^p} \qvar{\nabla_{y_{1:p}} f(x_{1:n-p},y_{1:p}), \nabla_{y_{1:p}} g(x_{n-p+1:m+n-2p},y_{1:p})} \dd y_{1:p}
\end{equ}
and $\qvar{\cdot,\cdot}$ denotes the usual scalar product in $\R^p$, the gradient $\nabla_{y_{1:p}}$ 
is only applied to the variables 
$y_{1:p}$ and, as in~\eqref{e:Hsnorm}, $x_{1:n}= (x_1,\dots,x_n)$. 
\medskip


We say that $F: \cS'(T) \rightarrow \R$ is a \textit{cylinder function} if there exist $\phi_1, \dots, \phi_n \in \cS(T)$
and a smooth function $f: \R^n \rightarrow \R$ whose partial derivatives grow at most polynomially at infinity
such that $F[u] = f(u(\phi_1), \dots , u(\phi_n) )$. 
A random variable $F \in L^2 (\mu)$ is said to be smooth if it is a cylinder function on $\cS'(T)$ 
endowed with the measure $\mu$, i.e. there exist $\phi_1, \dots, \phi_n \in H$
and $f : \R^n \rightarrow \R$ as above such that $F= \f{\mu}$.
The \textit{Malliavin derivative} of a smooth random variable $F= \f{\mu}$ is the $H$-valued random variable 
given by
\begin{equ}[e:MalliavinD]
	DF \eqdef \sum_{i=1}^n \partial_i \f{\mu} \phi_i\, ,
\end{equ}
and we will denote by $D_xF$ the evaluation of $DF$ at $x$ and by $D_k F$ its Fourier transform at $k$. 
A commonly used tool in Wiener space analysis is 
\textit{Gaussian integration by parts}~\cite[Lemma 1.2.2]{nualartMalliavinCalculusRelated2006} 
which states that for any two smooth random variables $F,G \in L^2 (\mu)$ we have
\begin{equ}[e:ibp]
	\Ex [G \langle D F, h\rangle_{H} ] = \Ex [ -F\langle D G, h\rangle_{H} + FG \mu (h) ]\, .
\end{equ}
When on the torus, we will mostly work with the Fourier transform of $\mu$ 
which is a family of complex valued Gaussian random variables. 
Even though, strictly speaking, the results above do not cover this case, 
in \cite[Section 2]{Cannizzaro.etal2D2021} 
it was shown that one can naturally extend $\dot{H}^0 (\T^2, \R) = L^2 (\T^2,\R)$ to $L^2(\T^2, \C)$. 
Such extension can also be performed in the present context, 
and we are therefore allowed to exploit~\eqref{e:ibp} also in case of complex-valued $h$. 

\section{Invariant measures of the regularised equation}\label{s:invmeas}

The goal of this section is to construct a stationary solution to the regularised critical Navier-Stokes equation on $\R^2$.  
We will first consider the analogous equation on the torus of fixed size, where invariance is easier to obtain.
Subsequently, via a compactness argument, we will scale the size of the torus to infinity and 
characterise the limit of the corresponding solutions via a martingale problem.

 \subsection{The regularised Vorticity equation on $\T_M^2$}\label{s:SNSperiodic}

For $\theta\in(0,1]$, we consider the periodic version on $\T_M^2$ of~\eqref{e:vsnsN} given by
\begin{equ}[e:vsnsNT]
	\partial_t \tw =  -\tfrac12(-\Delta)^{\theta} \tw - \l \cN^{N,M}[\tw] + (-\Delta)^{\frac{\theta+1}{2}} \xi^M\,, \quad \tw(0,\cdot) = \omega_0^M \,,
\end{equ}
where $\omega_0^M$ is the initial condition, $\xi^M$ is a space-time white noise on $\R\times\T^2_M$ 
and $\cN^{N,M}$ is the non-linearity defined in \eqref{e:SNSnonlin}. 
In Fourier variables,~\eqref{e:vsnsNT} becomes
\begin{equs}
	\dd\,\ftw_k = -\tfrac12 |k|^{2\theta} \ftw_k - \l\cN^{N,M}_k [\tw] + |k|^{\theta+1} \dd B_k(t)\,, \qquad k\in\Z^2_M\,
\end{equs}
where the complex-valued Brownian motions $B_k$ are defined via $B_k(t)\eqdef\int_0^t\hat\xi^M_k(\dd s)$, 
$\hat\xi^M_k$ being the $k$-th Fourier mode of $\xi^M$, and the Fourier transform of 
the non-linearity $\cN^{N,M}$ takes the form
\begin{equ}[e:nonlintorus]
	\cN^{N,M}_k [\tw] = \frac{1}{M^2}\sum_{\ell + m = k} \mathcal{K}_{\ell,m}^{N} \tw_{\ell} \tw_{m} \, ,
\end{equ}
for
\begin{equ}[e:nonlintorusCoeff]
	\quad \K{\ell,m} \eqdef \frac{1}{2\pi} \molf_{\ell,m}  \frac{(\ell^\perp\cdot (\ell+m))(m\cdot (\ell+m))}{|\ell|^2|m|^2} \,,\qquad\text{with}\quad \molf_{\ell,m}\eqdef \molf_\ell\molf_m\molf_{\ell+m}
\end{equ}
and the variables $\ell$ and $m$ appearing in the previous equations range over $\Z^2_M$. 

As a first step in our analysis, we determine  basic properties of the
solution of~\eqref{e:vsnsNT}.

\begin{proposition}\label{p:ExistenceMarkov}
Let $M , N\in\N$ and $\theta\in(0,1]$. Then, for every deterministic initial condition 
$\tw_0\in \dot{H}^{-2}(\T^2_M)$,~\eqref{e:vsnsNT} has a unique
strong solution $\tw\in C(\R_+,\dot{H}^{-2}(\T^2_M))$. Further, $\tw$ is a strong Markov process.
\end{proposition}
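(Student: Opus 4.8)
The plan is to pass to Fourier coordinates and exploit the structural consequence of Assumption~\ref{a:mol}, namely that $\molf$ is supported in the annulus $\{1/N<|k|<N\}$, so that the nonlinearity couples only finitely many modes. Writing $\Lambda_{N,M}\eqdef\{k\in\Z_M^2:1/N<|k|<N\}$, the coefficient $\molf_{\ell,m}=\molf_\ell\molf_m\molf_{\ell+m}$ in~\eqref{e:nonlintorusCoeff} forces $\ell$, $m$ and $\ell+m$ all to lie in $\Lambda_{N,M}$; in particular $\cN^{N,M}_k[\tw]$ vanishes unless $k\in\Lambda_{N,M}$ and, when non-zero, depends only on the finitely many \emph{active} modes $\{\ftw_\ell:\ell\in\Lambda_{N,M}\}$. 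Hence the infinite Fourier system splits into two decoupled pieces: a finite-dimensional SDE for $\{\ftw_k:k\in\Lambda_{N,M}\}$, driven by $\{B_k:k\in\Lambda_{N,M}\}$, and, for each inactive mode $k\notin\Lambda_{N,M}$, an independent one-dimensional complex Ornstein--Uhlenbeck equation $\dd\ftw_k=-\tfrac12|k|^{2\theta}\ftw_k\,\dd t+|k|^{\theta+1}\dd B_k$. Since the two families of driving noises are independent and there is no drift coupling between them, the full solution is the independent superposition of the two pieces.

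For the active system the drift is the sum of the linear dissipative term $-\tfrac12|k|^{2\theta}\ftw_k$ and the quadratic term $-\l\cN^{N,M}_k$, both smooth with at most polynomial growth, and the diffusion is constant; standard SDE theory therefore yields a unique strong solution up to a possible explosion time $\tau$. The crux is to prove $\tau=\infty$. I would do this via the Lyapunov function given by the (truncated) kinetic energy $V(\tw)\eqdef\|\tw\|_{\dot H^{-1}(\T_M^2)}^2=\tfrac1{M^2}\sum_{k\in\Lambda_{N,M}}|k|^{-2}|\ftw_k|^2$, for which the decisive input is the conservation identity $\langle\cN^{N,M}[\tw],\tw\rangle_{\dot H^{-1}(\T_M^2)}=0$. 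This is the mollified analogue of the conservation of kinetic energy by the Euler/Navier--Stokes transport nonlinearity: in Fourier it reduces, after using $\overline{\ftw_k}=\ftw_{-k}$, to the cancellation of $\sum_{\ell+m+n=0}|n|^{-2}\K{\ell,m}\ftw_\ell\ftw_m\ftw_n$ upon symmetrising $\K{\ell,m}$ over the three frequencies (here one uses that $\molf$ is real and radial). Granting this, Itô's formula gives $\mathcal L V(\tw)\le C_{N,M}-c_{N,M}V(\tw)$ with $\mathcal L$ the generator of the active system, the constant coming from the finite Itô correction and the dissipation term contributing the negative part. Hence $\E[V(\tw_{t\wedge\tau})]\le V(\tw_0)+C_{N,M}t$ and, by the standard Khasminskii non-explosion criterion, $\tau=\infty$.

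Each inactive mode is then solved explicitly as a globally defined, continuous complex Ornstein--Uhlenbeck process, and a standard regularity estimate for the stochastic convolution controls the high-frequency tail, so that the superposition with the finite active component defines an a.s.\ continuous $\dot H^{-2}(\T_M^2)$-valued process. Uniqueness holds modewise and hence for $\tw$. Finally, the active system is a non-explosive diffusion with locally Lipschitz coefficients, hence a strong Markov Feller process; the inactive modes form an independent strong Markov Gaussian process; and the independent superposition of two strong Markov processes is again strong Markov, which gives the last assertion. I expect the main obstacle to be establishing the energy-conservation identity for the \emph{mollified} nonlinearity, on which the non-explosion argument rests, together with the tail estimate upgrading the modewise solution to a genuinely continuous trajectory in the stated Sobolev space.
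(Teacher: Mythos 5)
Your proposal is correct and follows essentially the same route as the paper: the paper likewise splits the Fourier system into the finitely many modes coupled by the mollified nonlinearity and the remaining independent Ornstein--Uhlenbeck modes, and obtains non-explosion of the finite-dimensional part from the conservation identity $\langle\cN^{N,M}[\tw],\tw\rangle_{\dot H^{-1}}=0$, which is exactly the paper's Lemma~\ref{l:H-1invariance}. The only cosmetic difference is that the paper verifies this cancellation in physical space via the divergence-free structure of the Navier--Stokes nonlinearity rather than by Fourier symmetrisation, and otherwise defers the remaining standard details to the references.
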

\begin{proof}
The regularisation of the non-linearity is chosen in such a way that the first $N$ Fourier modes 
of $\omega^{N,M}$ are decoupled from $\{\omega_k^{N,M}\}_{|k|\geq N}$. Now, the latter 
is an Ornstein–Uhlenbeck process which is well-known to belong to $ C(\R_+,\dot{H}^{-2}(\T^2_M))$. 
The former instead solves a non-linear SPDE that preserves the $\dot{H}^{-1}$ norm 
as shown in Lemma \ref{l:H-1invariance} below. 
The conclusion can therefore be reached arguing 
as in~\cite[Section 7]{gubinelliRegularizationNoiseStochastic2013} 
(see also~\cite[Proposition 3.4]{Cannizzaro.etal2D2021}). 
\end{proof}

\begin{lemma}\label{l:H-1invariance}
Let $T=\T^2_M$ or $\R^2$.  Then for any distribution $\mu \in\CS'(T)$ such that 
$\grad \cdot (K\ast (\mu \ast \mol)) = 0$ we have
	\begin{equ}
		\langle \nnlin[\mu],\mu\rangle_{\dot{H}^{-1}(T)}=0 \, .
	\end{equ}
\end{lemma}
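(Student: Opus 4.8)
The plan is to reduce the claim to a pointwise algebraic cancellation in Fourier space, working on the torus $\T^2_M$ first (the $\R^2$ case being identical with sums replaced by integrals). Recall from \eqref{e:nonlintorus}--\eqref{e:nonlintorusCoeff} that the $k$-th Fourier mode of the nonlinearity is $\nnlin_k[\mu]=\frac{1}{M^2}\sum_{\ell+m=k}\K{\ell,m}\mu_\ell\mu_m$. Since the $\dot H^{-1}$ pairing in Fourier variables carries a weight $|k|^{-2}$ and since $\mu$ is real so that $\mu_{-k}=\overline{\mu_k}$, I would write
\begin{equ}
\langle \nnlin[\mu],\mu\rangle_{\dot H^{-1}(\T^2_M)}=\frac{1}{M^2}\sum_{k}\frac{1}{|k|^2}\nnlin_k[\mu]\,\overline{\mu_k}=\frac{1}{M^4}\sum_{\ell,m}\frac{\K{\ell,m}}{|\ell+m|^2}\,\mu_\ell\mu_m\,\mu_{-(\ell+m)}\,.
\end{equ}
The point is that the summand is a product $\mu_\ell\mu_m\mu_{-(\ell+m)}$ which is \emph{fully symmetric} under permuting the three Fourier labels $\ell$, $m$, and $-(\ell+m)$, whereas the coefficient $\K{\ell,m}/|\ell+m|^2$ is not manifestly so; the strategy is to symmetrise the coefficient over these permutations and show the symmetrised version vanishes identically.

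First I would symmetrise the whole expression by relabelling. Setting $n\eqdef-(\ell+m)$ so that $\ell+m+n=0$ and $\molf_{\ell,m}=\molf_\ell\molf_m\molf_{\ell+m}=\molf_\ell\molf_m\molf_{n}$ is already symmetric in $\{\ell,m,n\}$ (using radial symmetry of $\molf$, giving $\molf_{n}=\molf_{-n}$), the entire dependence on the labels that must be symmetrised sits in the purely geometric factor
\begin{equ}
G(\ell,m)\eqdef\frac{1}{|\ell+m|^2}\cdot\frac{(\ell^\perp\cdot(\ell+m))(m\cdot(\ell+m))}{|\ell|^2|m|^2}\,.
\end{equ}
Because the monomial $\mu_\ell\mu_m\mu_n$ is invariant under the full symmetric group on $\{\ell,m,n\}$ subject to $\ell+m+n=0$, the sum is unchanged if $G$ is replaced by its average over this group. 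I would therefore compute the symmetrised coefficient and show it is zero. Using $\ell+m=-n$, so $|\ell+m|^2=|n|^2$, and $\ell^\perp\cdot(\ell+m)=\ell^\perp\cdot m=-\ell^\perp\cdot n$ (since $\ell^\perp\cdot\ell=0$), together with $m\cdot(\ell+m)=-m\cdot n$, the factor rewrites cleanly in terms of the cross and dot products of the three vectors summing to zero. The expected mechanism is the standard inviscid-invariant (enstrophy-conservation) cancellation: after symmetrisation the numerator becomes an antisymmetric combination of terms like $(\ell^\perp\cdot m)(m\cdot n)/(|\ell|^2|m|^2|n|^2)$ that cancels in pairs under the odd permutations, which change the sign of every cross product $\ell^\perp\cdot m$ while leaving the denominators and the dot products invariant.

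The main obstacle I anticipate is purely bookkeeping: verifying that the six-term symmetrised numerator vanishes, i.e. that
\begin{equ}
\sum_{\sigma\in S_3}\operatorname{sgn-free\ sum of}\ \frac{(\ell_\sigma^\perp\cdot m_\sigma)(m_\sigma\cdot n_\sigma)}{|\ell|^2|m|^2|n|^2}=0
\end{equ}
once the common denominator $|\ell|^2|m|^2|n|^2$ is factored out. The clean way to handle this is to exploit the hypothesis $\grad\cdot(K\ast(\mu\ast\mol))=0$: this incompressibility of the mollified velocity is exactly the structural reason the transport nonlinearity conserves the $\dot H^{-1}$ norm, and it should let me recognise the symmetrised numerator as the Fourier-side image of $\int (K\ast\tilde\mu)\cdot\grad(\tfrac12\tilde\mu^2)$ with $\tilde\mu\eqdef\mol\ast\mu$, which integrates to zero by the divergence-free condition after integration by parts. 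Thus rather than a brute-force trigonometric identity, I would argue in physical space: the divergence form $\div\mol\ast(\,\cdot\,)$ in \eqref{e:SNSnonlin} together with $\langle\,\cdot\,,\mu\rangle_{\dot H^{-1}}=\langle(-\Delta)^{-1}\,\cdot\,,\mu\rangle_{L^2}$ and the self-adjointness of the mollifier reduces the pairing to $\langle(K\ast\tilde\mu)\cdot\grad\tilde\mu,\,\psi\rangle$ for $\psi$ the stream function of $\tilde\mu$, which is $\int(K\ast\tilde\mu)\cdot\grad(\tfrac12|\grad\psi|^2\text{-type density})$ and vanishes upon integrating by parts against the divergence-free field $K\ast\tilde\mu$. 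I expect the physical-space route to be shorter and less error-prone than the Fourier symmetrisation, though I would keep the Fourier computation as a cross-check.
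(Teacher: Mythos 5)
Your preferred (physical-space) route is essentially the paper's own proof: the paper likewise rewrites the pairing in terms of the mollified velocity $\psi^N=K\ast(\mu\ast\mol)$, reduces it to $\qvar{\psi\cdot\nabla\psi,\psi}_{L^2(T)}$, and concludes by the classical integration-by-parts cancellation for the divergence-free transport term. Your Fourier symmetrisation over $\ell+m+n=0$ is a correct equivalent cross-check but is not needed.
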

\begin{proof}
Let $\psi^{N} = K \ast (\mu \ast \mol)$ so that $\grad \cdot \psi^{N} =0$. 
Since $N$ is fixed throughout the proof, we will omit the superscript of $\psi$. 
Notice first that 
\begin{equs}
\qvar{\nnlin[\mu],\mu}_{\dot H^{-1} (T)} &= \qvar{\grad^{\perp} \cdot (\psi \cdot \grad \psi), \mu}_{\dot H^{-1} (T)} = \qvar{\grad^{\perp} \cdot (\psi \cdot \grad \psi), \grad^{\perp} \cdot \psi }_{\dot H^{-1} (T)} \\
		&= \qvar{\psi \cdot \grad \psi, (-\Delta) \cdot \psi }_{\dot H^{-1} (T)} = \qvar{\psi \cdot \grad\psi, \psi }_{L^2(T)}\,.
\end{equs}
The result now follows since 
the first term in the last scalar product at the right hand side is nothing 
but the Navier-Stokes non-linearity (see~\eqref{e:sns})) 
for which the equality is well-known. (Alternatively, one can perform a simple integration by parts 
and exploit the divergence free assumption $\grad \cdot \psi = 0$.) 
\end{proof}

Even though the generator $\tgen$ of the Markov process $\tw$, is a complicated operator,
its action on cylinder functions $F$ can be easily obtained by applying It\^o's formula and singling out the
drift term. By doing so, we deduce that for any such $F$, $\tgen F$ can be written as $\tgen F= \CL_\theta^M F+ \tgena F$,
where $\CL_0^M $ and $\tgena$ are given by
\begin{equs}[e:SNSgent]
	\CL_\theta^M F(\w) &\eqdef \half \sum_{i=1}^n \w (-(-\Delta)^{\theta} \phi_i )  \partial_i f  +  \half\sum_{i,j=1}^n \qvar{ \phi_i, \phi_j}_{\dot H^{\theta+1}(\T^2_M)}\, \partial^2_{i,j} f,\\
	\tgena F(\w) &\eqdef - \l \sum_{i=1}^n  \cN^{N,M} [\w](\phi_i)\, \partial_i f,
\end{equs}
where we abbreviated $\partial_i f=\partial_i \f{\w}$. We are now ready to prove the following proposition.

\begin{proposition}
	\label{p:SNStorinvar}
Let $\mu^M$ be the Gaussian spatial noise on $\T_M^2$ with covariance given by~\eqref{e:mu}. 
Then, for every $\theta\in(0,1]$, $\mu^M$ is an invariant measure of the solution $\tw$ of \eqref{e:vsnsNT}.
\end{proposition}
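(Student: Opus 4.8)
The plan is to prove \emph{infinitesimal invariance}, i.e. that $\int \tgen F\,\dd\mu^M=0$ for every cylinder function $F$, and then to upgrade this to genuine invariance of $\mu^M$. Given that $\tw$ is a well-defined strong Markov process (Proposition~\ref{p:ExistenceMarkov}), the upgrade is standard: cylinder functions form a core on which the generator is explicit, so one argues as in~\cite[Section 7]{gubinelliRegularizationNoiseStochastic2013} and~\cite{Cannizzaro.etal2D2021}. I would exploit at the outset the decoupling used in the proof of Proposition~\ref{p:ExistenceMarkov}: since $\molf$ is supported in $\{1/N<|k|<N\}$, the coefficients $\K{\ell,m}$ in~\eqref{e:nonlintorusCoeff} vanish unless $\ell$, $m$ and $\ell+m$ all lie in this annulus, so the modes $\{\tw_k\}_{k\in\mathrm{supp}\,\molf}$ evolve as a \emph{finite-dimensional} closed nonlinear system (finitely many points of $\Z^2_M$ in a bounded annulus), while the remaining modes form an independent Ornstein--Uhlenbeck process. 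This reduces the nonlinear part of the argument to a finite-dimensional computation.

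Next I would split $\tgen=\CL_\theta^M+\tgena$ as in~\eqref{e:SNSgent} and treat the two pieces separately. For the Ornstein--Uhlenbeck generator $\CL_\theta^M$, the field $\mu^M$ is precisely the stationary law: in Fourier variables mode $k$ solves a linear SDE with drift rate $\tfrac12|k|^{2\theta}$ and diffusivity $|k|^{2(\theta+1)}$, so that its stationary variance equals $|k|^{2(\theta+1)}/|k|^{2\theta}=|k|^2$, which is exactly the covariance of $\mu^M$ in~\eqref{e:mu}. Concretely, I would apply Gaussian integration by parts~\eqref{e:ibp} with $h_i=-(-\Delta)^\theta\phi_i$ to the first-order term of $\CL_\theta^M F$; using $\qvar{\phi_j,(-\Delta)^\theta\phi_i}_{\dot H^1(\T^2_M)}=\qvar{\phi_i,\phi_j}_{\dot H^{\theta+1}(\T^2_M)}$, the resulting second-derivative expression cancels the diffusion term of $\CL_\theta^M F$ upon integration, giving $\int\CL_\theta^M F\,\dd\mu^M=0$.

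The core of the proof is to show $\int\tgena F\,\dd\mu^M=0$, where two structural facts conspire. Recall that $\mu^M$ has formal density proportional to $\exp(-\tfrac12\|\cdot\|_{\dot H^{-1}(\T^2_M)}^2)$, so on the finite nonlinear block invariance of the Gaussian under the drift $-\l\,\cN^{N,M}$ amounts to (i) the drift being divergence-free and (ii) the drift conserving the quadratic form $\|\cdot\|_{\dot H^{-1}}^2$. For (i), I would read off from~\eqref{e:nonlintorus} that $\cN^{N,M}_k$ is a sum of products $\tw_\ell\tw_m$ with $\ell+m=k$ and $\ell,m\in\Z^2_M$; since $0\notin\Z^2_M$, the mode $\tw_k$ itself never appears in $\cN^{N,M}_k$, whence $\partial_{\tw_k}\cN^{N,M}_k=0$ and the drift has vanishing divergence. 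For (ii), the relevant pairing is exactly $\qvar{\nnlin[\w],\w}_{\dot H^{-1}(\T^2_M)}$, which vanishes by Lemma~\ref{l:H-1invariance} (whose hypothesis $\grad\cdot(K\ast(\w\ast\mol))=0$ holds identically, the Biot--Savart velocity being divergence-free). Assembling these, the finite-dimensional Gaussian-invariance computation — equivalently, a direct application of~\eqref{e:ibp} to each term of $\tgena F$ — yields $\int\tgena F\,\dd\mu^M=0$, and combining with the previous paragraph completes the infinitesimal invariance.

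I expect the main obstacle to be the rigorous bookkeeping of the Gaussian integration by parts~\eqref{e:ibp} in the complex-valued, $\dot H^1$-based Wiener space: one must carefully identify the two contributions produced by~\eqref{e:ibp} — the diagonal self-contraction term, which vanishes because the constraint $\ell+m=k$ with $\ell,m\neq 0$ forbids diagonal interactions, and the energy term, which is controlled by Lemma~\ref{l:H-1invariance} — while ensuring the requisite integrability, so that $\cN^{N,M}[\w](\phi_i)\,\partial_i f\in L^1(\mu^M)$ (guaranteed by the smoothing in Assumption~\ref{a:mol}). The finite-dimensionality of the nonlinear block, together with the antisymmetry of $\K{\ell,m}$ under $\ell\leftrightarrow m$ modulo the energy factor $|m|^2-|\ell|^2$, is what renders these cancellations transparent.
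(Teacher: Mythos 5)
Your proposal is correct and follows essentially the same route as the paper: reduce to infinitesimal invariance on cylinder functions (the paper invokes Echeverr\`ia's criterion for this step), kill the symmetric part $\CL_\theta^M$ by Gaussian integration by parts, and for $\tgena$ observe that the self-contraction (divergence) term vanishes because $\K{\ell,m}=0$ whenever $\ell$ or $m$ is $0$, while the remaining cubic term is exactly $\qvar{\cN^{N,M}[\mu],\mu}_{\dot H^{-1}(\T^2_M)}$, which vanishes by Lemma~\ref{l:H-1invariance}. Your "divergence-free drift plus conservation of $\|\cdot\|_{\dot H^{-1}}^2$" framing and the finite-dimensional reduction are just a more conceptual packaging of the identical integration-by-parts computation carried out in Fourier modes in the paper.
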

\begin{proof}
	The proof of this statement follows the steps of~\cite[Section 7]{gubinelliRegularizationNoiseStochastic2013} but we
	provide it here for completeness.
By Echeverr\`ia's criterion~ \cite{echeverriaCriterionInvariantMeasures1982}, it suffices to show that for any cylinder function
	$F=f(\mu^M(\phi_1),\dots,\mu^M(\phi_n))$ with $f$ at least twice continuous differentiable,
	we have $\E [ \tgen F (\mu^M) ] = 0$, where $\E$ is the expectation taken with respect to the law of $\mu^M$.
	Since, throughout the proof $M$ is fixed, we will omit it as a superscript to lighten the notation. 
	We will use the Fourier representation of the operators $\CL_\theta^M$ and $\tgena$, 
	which can be deduced by~\eqref{e:SNSgent} simply taking $F$ depending on (finitely many) Fourier modes of $\mu$ 
	and is 
	\begin{equs}
	\CL_\theta^M F(\mu) &= \frac{1}{2M^2} \sum_{k } |k|^{2\theta} \left( -\mu_{-k} D_k + |k|^2 D_{-k}D_k  \right) F(\mu)\,,\label{e:SNSgenst}\\
	\tgena F(\mu) &
	= -\frac{\l}{M^4}\sum_{i,j }   \K{i,j} \mu_i \mu_j  D_{-i-j} F(\mu)\,.\label{e:SNSgenat}
\end{equs}
	Let  us first show that $\E [\CL_\theta^M F (\mu) ] = 0$.
	Let $k\in\Z^2_M$. Exploiting $|k|^2 e_k = (-\Delta)e_k$ and applying Gaussian integration by parts~\eqref{e:ibp}
	with $h = e_k$, $G = 1$ and $F = D_k F$, we obtain
	\begin{equs}
		\E  [|k|^2 D_{-k} D_k F(\mu)] 
		= \E [\qvar{D (D_k F(\mu)) , e_k }_{\dot{H}^1} ] = \E[\mu^M_{-k} D_k F (\mu)]
	\end{equs}
	which immediately implies $\E [\CL_\theta F(\mu) ] =0$. We now turn to $\E [\tgena F (\mu) ]$.
	Let $i,\,j\in\Z^2_M$ such that $i+j\neq 0$. We apply once more Gaussian integration by parts,
	this time choosing $G= \mu_i \mu_j$ and $h = e_{i+j}$, so that we have
	\begin{equs}
		\E [\mu_i \mu_j D_{-i-j} F (\mu) ] &= -\frac{1}{|i+j|^2}\E [\mu_i \mu_j \qvar{ DF(\mu), e_{i+j} }_{\dot{H}^1(\T^2_M)}  ]\\
		&= -\frac{1}{|i+j|^2} \E[-F(\mu) \qvar{D(\mu_i \mu_j) ,e_{i+j}}_{\dot{H}^1(\T^2_M)} + \mu_i \mu_j \mu_{-i-j} F(\mu)]\\
		&= \E[-F(\mu) D_{-i-j}(\mu_i \mu_j) ] - \E\left[\left(\frac{1}{|i+j|^2}\mu_i \mu_j \mu_{-i-j}\right)F(\mu)\right]
	\end{equs}
	Now, $D_{-i-j}(\mu_i \mu_j)\neq 0$ if and only if either $i$ or $j$ are $0$ in which case
	$\K{i,j}$ in~\eqref{e:nonlintorusCoeff} is $0$.
	Hence, the first summand above does not contribute to $\E [\tgena F (\mu) ]$ and we obtain
	\begin{equs}
		\E [\tgena F (\mu) ] &= \E\left[\left(-\frac{\l}{M^4}\sum_{i,j}\frac{\K{i,j}}{|i+j|^2}\mu_i \mu_j \mu_{-i-j}\right)F(\mu)\right] \\
		&=\E\left[\qvar{\cN^{N,M}[\mu],\mu}_{\dot{H}^{-1}(\T^2_M)}F(\mu)\right]=0
	\end{equs}
	where the last equality follows by Lemma~\ref{l:H-1invariance} so that the proof is concluded.
\end{proof}

From now on, we will only work with the stationary solution of~\eqref{e:vsnsNT}, i.e. 
the initial condition will always be taken to be 
\begin{equ}[e:IC]
\omega_0^{N,M}\eqdef\mu^M
\end{equ}
where $\mu^M$ is as in Proposition~\ref{p:SNStorinvar}. 

In the following statements, we aim at obtaining estimates on the solution $\tw$ to~\eqref{e:vsnsNT}
which are uniform in both $N$ and $M$. A crucial tool is the so-called It\^o's trick, 
first introduced in~\cite{gubinelliRegularizationNoiseStochastic2013}. 
To the reader's convenience, we now recall its statement,
adapted to the present context.

\begin{lemma}[It\^o-Trick]
	\label{l:itotrick}
	Let $\theta\in(0,1]$. Let $\cL^{N,M}$ be the generator of the Markov process $\tw$, solution to~\eqref{e:vsnsN} started from
	the invariant measure $\mu^M$ in~\eqref{e:IC}, and $\cL_\theta^M$ and $\tgena$
	be defined according to~\eqref{e:SNSgent}. Let $T>0$ and $F$ a cylinder function on $\CS'(\T^2_M)$.
	Then, for every $p\geq 2$, there exists a constant $C>0$ depending only on $p$ such that
	\begin{equs}[e:itotrick]
		\mathbf{E} \left[ \sup_{t \leq T} \Big| \int_0^t \CL_\theta^M F(\tw_s) \dd s \Big|^p \right]^{1/p} \leq C  T\phalf \E \left[ \mathcal{E}(F) \right]^{1/2}\, ,
	\end{equs}
	where the energy $\mathcal{E} (F)$ is given by
	\begin{equ}[e:SNSTenergy]
		\mathcal{E}^{M}(F)(\mu^M) \eqdef \frac{1}{M^2} \sum_{k \in \Z_M^2 } |k|^{2+2\theta}  |D_k F(\mu^M ) |^2=\int_{\T^2_M} |(-\Delta_x)^{\frac{1+\theta}{2}} D_x F(\mu^M)|^2\dd x\,,
	\end{equ}
	the laplacian above clearly acting on the $x$ variable. 
	Here and throughout, $\mathbf{E}$ denotes the expectation with respect to the law of the process
	$\{\tw_t\}_{t\in[0,T]}$, while $\E$ that with respect to the invariant measure $\mu^M$.
\end{lemma}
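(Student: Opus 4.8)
The plan is to run the forward–backward martingale decomposition that underlies the It\^o trick. Applying Dynkin's formula to the cylinder function $F$ along the stationary solution gives the forward martingale
\[
M_t \eqdef F(\tw_t) - F(\tw_0) - \int_0^t \cL^{N,M} F(\tw_s)\,\dd s\,,
\]
which is a martingale for the forward filtration. Since $\tw$ is stationary with invariant measure $\mu^M$ (Proposition~\ref{p:SNStorinvar}), the time-reversed process $\hat\w_s\eqdef\tw_{T-s}$ is again Markov, and the crux of the argument is to identify its generator as $\hat\cL\eqdef\cL_\theta^M-\tgena$. For this I would show that $\cL_\theta^M$ is symmetric and $\tgena$ is antisymmetric in $L^2(\mu^M)$. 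Symmetry of $\cL_\theta^M$ is the reversibility of the Ornstein--Uhlenbeck part and follows from the Gaussian integration-by-parts computation already performed in Proposition~\ref{p:SNStorinvar}. Antisymmetry of $\tgena$ is a direct consequence of that same proposition: as $\tgena$ is a first-order operator it is a derivation, so $\tgena(FG)=G\,\tgena F+F\,\tgena G$ and hence $\E[G\,\tgena F+F\,\tgena G]=\E[\tgena(FG)]=0$, since $\E[\tgena H]=0$ for every cylinder function $H$ (the content of the $\tgena$-part of Proposition~\ref{p:SNStorinvar}, which rests on Lemma~\ref{l:H-1invariance}). Granting this, Dynkin's formula for $\hat\w$ produces a backward martingale $\hat M$.

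Next I would combine the two. Writing the forward increment of $F$ over $[0,t]$ and the reversed increment $\hat M_T-\hat M_{T-t}$ and adding them, the antisymmetric contributions cancel while the symmetric ones add, since $\cL^{N,M}+\hat\cL=2\cL_\theta^M$. This yields the identity
\[
\int_0^t \cL_\theta^M F(\tw_s)\,\dd s = -\tfrac12 M_t - \tfrac12\big(\hat M_T - \hat M_{T-t}\big)\,.
\]
Taking the supremum over $t\le T$ and using $\sup_{t\le T}|\hat M_T-\hat M_{T-t}|\le 2\sup_{s\le T}|\hat M_s|$, the left-hand side is controlled by the running maxima of the two martingales.

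The remaining steps are quantitative. By the Burkholder--Davis--Gundy inequality, for each $p\ge2$ one has $\mathbf E[\sup_{t\le T}|M_t|^p]^{1/p}\le C_p\,\mathbf E[\langle M\rangle_T^{p/2}]^{1/p}$, and likewise for $\hat M$. The bracket is computed through the carr\'e du champ $\Gamma(F)\eqdef\cL^{N,M}(F^2)-2F\cL^{N,M}F$; the first-order drift $\tgena$ cancels, leaving only the diffusion contribution, and a short Fourier computation identifies $\Gamma(F)=\mathcal E^M(F)$, so that $\langle M\rangle_T=\int_0^T\mathcal E^M(F)(\tw_s)\,\dd s$ (and the same for $\hat M$, whose generator shares the diffusion part $\cL_\theta^M$). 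Invoking stationarity together with Minkowski's integral inequality then gives $\mathbf E[(\int_0^T\mathcal E^M(F)(\tw_s)\,\dd s)^{p/2}]^{2/p}\le T\,\E[\mathcal E^M(F)^{p/2}]^{2/p}$, hence $\mathbf E[\sup_{t\le T}|M_t|^p]^{1/p}\le C_p\,T^{1/2}\E[\mathcal E^M(F)^{p/2}]^{1/p}$. For $p=2$ this is already the claimed bound; for $p>2$, one passes from the $(p/2)$-th moment to the first moment of $\mathcal E^M(F)$ by the equivalence of $L^p$-norms on a fixed Wiener chaos (Gaussian hypercontractivity), which applies since in all our applications $\mathcal E^M(F)$ lies in a finite sum of chaoses.

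The main obstacle is the time-reversal step, namely making rigorous the identification of the reversed generator as $\cL_\theta^M-\tgena$. Once the symmetry of $\cL_\theta^M$ and the antisymmetry of $\tgena$ are established as above, one also uses the decoupling of the low and high Fourier modes noted in Proposition~\ref{p:ExistenceMarkov}, which reduces the reversal to a finite-dimensional statement for the nonlinear modes plus a reversible Ornstein--Uhlenbeck process for the remaining ones; the rest is the routine BDG-plus-stationarity chain described above.
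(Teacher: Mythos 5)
Your argument is the standard forward--backward martingale decomposition (Dynkin's formula for the process and its time reversal, cancellation of the antisymmetric part $\tgena$, BDG, identification of the bracket with the energy via the carr\'e du champ, stationarity, and finally Gaussian hypercontractivity to pass from the $p/2$-moment of $\mathcal{E}^M(F)$ to its first moment), which is exactly the proof the paper invokes by citing \cite{gubinelliRegularizationNoiseStochastic2013,gubinelliEnergySolutionsKPZ2018} together with the hypercontractivity refinement. Your proposal is correct and follows essentially the same route.
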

\begin{proof}
	The proof of the It\^o's trick can be found in several works,
	e.g.~\cite{gubinelliRegularizationNoiseStochastic2013,gubinelliEnergySolutionsKPZ2018}.
	With respect to those references, we further exploited Gaussian hypercontractivity~\cite[Theorem 1.4.1]{nualartMalliavinCalculusRelated2006}
	to replace the $p/2$ moment at the right hand side of~\eqref{e:itotrick} 
	with the square-root of the expectation of the energy.
\end{proof}

The It\^o's trick allows us to upper-bound moments of the integral in time of certain functionals of $\tw$
in terms of the first moment of the energy $\CE$ with respect to the law of $\tw$ at fixed time. Such a law
is explicit and Gaussian making the bound particularly useful. 
In the following proposition, we determine suitable estimates on the non-linearity.

\begin{proposition}\label{p:EnergyEstimates}
	Let $\theta\in(0,1]$, $T>0$ be fixed and $p\geq 2$. For $M,\,N\in\N$, let $\cN^{N,M}$ be defined according to~\eqref{e:nonlintorus} and
	$\l$ be as in~\eqref{e:CouplingConstant}. Then, there exists a constant $C=C(p)>0$, independent of $M,\,N\in\N$
	such that for all $\phi\in\CS(\T_M^2)$ and all $t\in[0,T]$, we have
	\begin{equs}
		\mathbf{E} \left[ \sup_{s \leq t} \Big| \int_0^s \tw_r(-(-\Delta)^\theta\phi) \dd r \Big|^p \right]^{1/p} &\leq C  t\phalf \|\phi\|_{\dot{H}^{1+\theta}(\T_M^2)}\, ,\label{e:Delta}\\
		\mathbf{E} \left[ \sup_{s \leq t} \Big|\l \int_0^s \cN^{N,M}[\tw_r](\phi)] \dd r \Big|^p \right]^{1/p} &\leq C N^{\theta-1} (t\vee t\phalf) \|\phi\|_{\dot{H}^2(\T_M^2)}  \, .\label{e:nonlinbound}
	\end{equs}
\end{proposition}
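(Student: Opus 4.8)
The plan is to prove both bounds via the It\^o trick of Lemma~\ref{l:itotrick}, which reduces the time-integral estimates to first-moment energy computations under the explicit Gaussian law $\mu^M$. The key observation is that both integrands are of the form $\int_0^s \CL_\theta^M F(\tw_r)\,\dd r$ for an appropriate cylinder-type functional $F$, so that~\eqref{e:itotrick} applies once I identify $F$ and compute $\Ex[\CE^M(F)]$.

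For the linear bound~\eqref{e:Delta}, I would take $F(\w)\eqdef \w(\phi)$, so that by~\eqref{e:SNSgenst} the generator $\CL_\theta^M$ acting on $F$ produces exactly $\tfrac12\,\w(-(-\Delta)^\theta\phi)$ in its first-order part (the second-order part vanishes since $F$ is linear). Thus $\int_0^s \w_r(-(-\Delta)^\theta\phi)\,\dd r$ is, up to the factor $2$, of the form to which the It\^o trick applies, and I get the right-hand side $C\,t^{1/2}\Ex[\CE^M(F)]^{1/2}$. Since $D_k F = \phi_{-k}$ is deterministic, the energy is simply
\begin{equ}
\CE^M(F) = \frac{1}{M^2}\sum_{k}|k|^{2+2\theta}|\phi_k|^2 = \|\phi\|_{\dot H^{1+\theta}(\T^2_M)}^2\,,
\end{equ}
which yields~\eqref{e:Delta} immediately.

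The nonlinear bound~\eqref{e:nonlinbound} is the substantive one. Here I would choose $F$ so that $\CL_\theta^M F$ reproduces (up to a harmless additive term that can be absorbed) the nonlinearity $\l\,\cN^{N,M}[\w](\phi)$; concretely $F$ should be a quadratic functional whose coefficients invert the quadratic form defining $\cN^{N,M}$ against the fractional Laplacian, in the spirit of the construction in~\cite{Cannizzaro.etal2D2021}. Writing $F$ in Fourier variables as a sum over pairs $(\ell,m)$ with coefficients built from $\K{\ell,m}/|\ell+m|^{2\theta}$, the energy $\CE^M(F)$ becomes a double sum over momenta that I must bound uniformly in $M$ and $N$. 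The main obstacle, and the crux of the whole proposition, will be extracting the correct power $N^{\theta-1}$ from this sum: one must control the weights $\K{\ell,m}$, which involve the Biot-Savart geometry $(\ell^\perp\!\cdot(\ell+m))(m\cdot(\ell+m))/(|\ell|^2|m|^2)$ together with the compactly-supported factors $\molf_{\ell,m}$, against the gain from the fractional denominators. The support condition in Assumption~\ref{a:mol} confines $\ell,m,\ell+m$ to the annulus $\{1/N<|k|<N\}$, and it is precisely this range together with the scaling of the kernel that produces the factor $N^{\theta-1}$; I expect this to require a careful dyadic or scaling decomposition of the momentum sum and an estimate of the resulting integral in $\|\phi\|_{\dot H^2}$. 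The factor $\l=\hat\lambda N^{2\theta-2}/\sqrt{\log N}$ (for $\theta=1$) or $\hat\lambda N^{2\theta-2}$ (for $\theta<1$) must be tracked through, and combining it with the energy bound should leave exactly $N^{\theta-1}$, with the $(t\vee t^{1/2})$ reflecting that the It\^o trick contributes a $t^{1/2}$ while any leftover drift term contributes a linear-in-$t$ piece. I would treat the $\theta=1$ and $\theta<1$ cases in parallel, since the logarithmic versus polynomial nature of $\l$ only affects the final bookkeeping and not the structure of the energy estimate.
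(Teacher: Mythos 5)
Your treatment of~\eqref{e:Delta} is exactly the paper's: take the linear functional $F(\w)=\w(\phi)$, note $\CL_\theta^M F(\w)=\tfrac12\w(-(-\Delta)^\theta\phi)$ with energy $\|\phi\|^2_{\dot H^{1+\theta}(\T^2_M)}$, and apply the It\^o trick. For~\eqref{e:nonlinbound} your strategy is also the paper's (solve a Poisson-type equation $(1-\CL_\theta^M)H=\lambda_{N,\theta}\cN^{N,M}[\cdot](\phi)$ on the second chaos, apply the It\^o trick to $\CL_\theta^M H$, and handle the leftover zeroth-order term $\int_0^s H\,\dd r$ by Jensen and hypercontractivity, which is where the linear-in-$t$ factor comes from). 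However, two points prevent this from being a proof. First, the one explicit formula you write for the inverted coefficients is wrong: since $\CL_\theta^M$ acts on a kernel $\fh\in\fock_2$ as multiplication by $-\tfrac12(|\ell|^{2\theta}+|m|^{2\theta})$ in Fourier variables, the solution of the Poisson equation has kernel $\lambda_{N,\theta}\,\mathcal{K}^N_{\ell,m}\phi_{-\ell-m}\big(1+\tfrac12(|\ell|^{2\theta}+|m|^{2\theta})\big)^{-1}$, not coefficients "built from $\mathcal{K}^N_{\ell,m}/|\ell+m|^{2\theta}$". This is not a cosmetic slip: with the denominator $|\ell+m|^{2\theta}$, which is constant on the fibre $\ell+m=k$ fixed by the test function, the inner sum over $\ell$ in the energy carries an uncompensated factor $|\ell|^{2\theta}$ and grows like $N^{2+2\theta}$, so no choice of bookkeeping recovers $N^{\theta-1}$; the gain comes precisely from the denominator being large when \emph{either} $|\ell|$ or $|m|$ is large.

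Second, the computation that is the actual content of the proposition — extracting $N^{\theta-1}$ from the energy sum — is announced but not performed. You anticipate "a careful dyadic or scaling decomposition", but none is needed and none is what closes the argument: after the elementary pointwise bound $|\mathcal{K}^N_{\ell,m}|\leq \molf_\ell|\ell+m|^2/(|\ell|\,|m|)$ one factors out $|k|^4|\phi_k|^2$ (whence the $\dot H^2$ norm) and is left with $\lambda_{N,\theta}^2\,M^{-2}\sum_{|\ell|\leq N}(1+\tfrac12|\ell|^{2\theta})^{-1}\lesssim \lambda_{N,\theta}^2\int_{|x|\leq N}(1+|x|^{2\theta})^{-1}\dd x$, which is $\lambda_{N,\theta}^2\log N$ for $\theta=1$ and $\lambda_{N,\theta}^2 N^{2-2\theta}$ for $\theta<1$; only at this point does the definition~\eqref{e:CouplingConstant} of $\lambda_{N,\theta}$ enter and produce $N^{2\theta-2}=(N^{\theta-1})^2$ in both cases. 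A parallel (and slightly different, because the weight is $|\ell|^2|m|^2$ rather than $|\ell|^{2+2\theta}|m|^2$) estimate is needed for $\|\fh\|_{\fock_2}$ in the zeroth-order term, where the cases $\theta\gtrless 1/2$ must be distinguished. As written, the proposal identifies the right framework but leaves the quantitative core of~\eqref{e:nonlinbound} unproven, and its only concrete ansatz would fail.
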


The proof of the previous proposition (and in particular of~\eqref{e:nonlinbound}) is based on the following lemma. 

\begin{lemma}\label{l:CaosNnlin}
For $M,\,N\in\N$, $\phi\in\CS(\T^2_M)$, let
$\cN^{N,M}[\mu^M](\phi)$ be the smooth  random variable defined according to~\eqref{e:nonlintorus}, 
with $\mu^M$ replacing $\tw$.
Then, $\cN^{N,M}_k[\mu^M](\phi)$ belongs to the second homogeneous Wiener chaos $\wc_2$. 
Further, for all $\theta\in(0,1]$ the Poisson equation
\begin{equ}[e:Poisson]
	(1 -\CL_\theta^M ) H^{N,M}[\mu^M] (\phi)= \l \cN^{N,M} [\mu^M] (\phi)
\end{equ}
has a unique solution whose energy satisfies
\begin{equ}[e:EnergyPoisson]
	\E[\mathcal{E}^{N,M} (H^{N,M}[\mu^M](\phi))] =\frac{4\l^2}{M^4}\sum_{\ell,m}|\ell|^{2+2\theta}|m|^2\frac{ (\K{\ell,m})^2}{( 1+ \frac12 (|\ell|^{2\theta}+|m|^{2\theta}))^2} |\phi_{-\ell-m}|^2\,.
\end{equ}
\end{lemma}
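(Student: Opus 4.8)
The plan is to work entirely within the chaos expansion associated to the isonormal process $\mu^M$ and reduce everything to explicit algebra on the Fourier modes $\{\mu_k\}_{k\in\Z^2_M}$.

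First, for the chaos membership I would expand the nonlinearity tested against $\phi$ using~\eqref{e:nonlintorus}, obtaining (up to the overall normalisation) $\cN^{N,M}[\mu^M](\phi)=\frac{1}{M^4}\sum_{\ell,m}\phi_{-\ell-m}\K{\ell,m}\,\mu_\ell\mu_m$, which is a quadratic polynomial in the Gaussian family $\{\mu_k\}$. Decomposing each product into its Wick part and its mean, $\mu_\ell\mu_m={:}\mu_\ell\mu_m{:}+\E[\mu_\ell\mu_m]$, the chaos-$0$ piece is supported on $\ell+m=0$, since $\E[\mu_\ell\mu_m]$ is a multiple of $|\ell|^2\delta_{\ell+m=0}$. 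But at $\ell+m=0$ the coefficient $\K{\ell,m}$ vanishes, because both factors $\ell^\perp\cdot(\ell+m)$ and $m\cdot(\ell+m)$ in~\eqref{e:nonlintorusCoeff} are zero there. Hence no chaos-$0$ contribution survives and $\cN^{N,M}[\mu^M](\phi)\in\wc_2$. That it lies in $L^2(\mu)$ (finite variance) follows from the rapid decay of $\phi$ together with the boundedness and compact Fourier support of $\molf$.

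For the Poisson equation the key point is that $\CL_\theta^M$ generates the Gaussian (Ornstein–Uhlenbeck) dynamics fixing $\mu^M$ and therefore acts diagonally on Wiener chaos: using the Fourier form~\eqref{e:SNSgenst} of $\CL_\theta^M$ together with the fact that $D_k\mu_j\propto\delta_{k+j=0}$, a direct computation gives $\CL_\theta^M{:}\mu_\ell\mu_m{:}=-\tfrac12(|\ell|^{2\theta}+|m|^{2\theta}){:}\mu_\ell\mu_m{:}$. Consequently $1-\CL_\theta^M$ acts on $\wc_2$ by multiplication by $1+\tfrac12(|\ell|^{2\theta}+|m|^{2\theta})\geq 1$. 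Since on every chaos its eigenvalues are $\geq 1$, the operator is a bijection of $L^2(\mu)$ with contractive inverse, which yields both existence and uniqueness of $H^{N,M}[\mu^M](\phi)$; moreover, because the right-hand side lies in $\wc_2$, so does the solution, and it is obtained by dividing each mode by $1+\tfrac12(|\ell|^{2\theta}+|m|^{2\theta})$.

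Finally, to compute the energy I would use the explicit solution $H=\frac{\l}{M^4}\sum_{\ell,m}\frac{\phi_{-\ell-m}\K{\ell,m}}{1+\frac12(|\ell|^{2\theta}+|m|^{2\theta})}{:}\mu_\ell\mu_m{:}$, take its Malliavin derivative $D_k$ (which maps $\wc_2\to\wc_1$ and, by the product rule and $D_k\mu_j\propto\delta_{k+j=0}$, collapses one slot), insert it into $\mathcal{E}^{N,M}(H)=\frac{1}{M^2}\sum_k|k|^{2+2\theta}|D_kH|^2$, and take the expectation via the isometry on $\wc_1$ with covariance $\E[\mu_m\overline{\mu_{m'}}]=M^2|m|^2\delta_{m,m'}$. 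The weight $|k|^{2+2\theta}$ lands on the differentiated slot, producing $|\ell|^{2+2\theta}$, while the surviving mode contributes its $\dot H^1$-variance $|m|^2$; after re-indexing this gives~\eqref{e:EnergyPoisson}. (Equivalently one may bypass $D_k$ through the Dirichlet-form identity $\E[\mathcal{E}(H)]=2\,\E[H(-\CL_\theta^M H)]$ and the chaos isometry, which delivers the same expression.) The main obstacle is precisely this final bookkeeping: because $\mu^M$ is modelled on $\dot H^1$ rather than $L^2$, every covariance and the isomorphism $I_2$ carry $|k|^2$-weights that must be propagated consistently through the complex Fourier modes, and it is the interplay of the factor $2$ from differentiating a quadratic, the symmetry ${:}\mu_\ell\mu_m{:}={:}\mu_m\mu_\ell{:}$ (which selects the symmetric part of the kernel and doubles the count), and the variances $M^2|\ell|^2,\,M^2|m|^2$ that must conspire to yield exactly the prefactor $4\l^2/M^4$ and the asymmetric weight $|\ell|^{2+2\theta}|m|^2$.
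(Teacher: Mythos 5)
Your proposal is correct and follows essentially the same route as the paper: identify the chaos-$2$ kernel of $\cN^{N,M}[\mu^M](\phi)$ (with the vanishing of the chaos-$0$ part), use that $1-\CL_\theta^M$ acts on $\fock_2$ by multiplication by $1+\tfrac12(|\ell|^{2\theta}+|m|^{2\theta})$ to invert explicitly, and compute the energy via $D\,I_2(\fh)=2I_1(\fh(x,\cdot))$ together with the $\dot H^1$-weighted isometry, which is exactly how the factors $4$, $|\ell|^{2+2\theta}$ and $|m|^2$ arise in the paper. The only cosmetic differences are that you verify the diagonal action of $\CL_\theta^M$ on Wick products by hand where the paper cites a lemma of Gubinelli--Turra, and you argue the vanishing of the zeroth chaos from $\K{\ell,-\ell}=0$ rather than from $\langle 1,\nabla\phi\ast\mol\rangle=0$; both are valid.
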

\begin{proof}
Note that, by~\eqref{e:SNSnonlin} the non-linearity $\cN^{N,M}$ tested against $\phi$ 
can be written as 
\begin{equ}[e:NonlinNew]
\cN^{N,M}[\mu^M](\phi)=- \langle \mu^M(K\ast\mol_{\cdot}) \,\mu^M(\mol_{\cdot})\,,\, \nabla \phi\ast\mol_{\cdot} \rangle\,,
\end{equ}
the scalar product at the right hand side being the usual $L^2$ pairing.
Now, thanks to our choice of the mollifier $\rho$ in~\eqref{ass:rho}, and in particular the fact that 
its Fourier transform is $0$ in a neighbourhood of the origin, both $K\ast\mol_{\cdot}$ and $\mol_{\cdot}$ 
live in $\cS(\T^2_M)$ so that the expectation of the right hand side of~\eqref{e:NonlinNew} 
is finite by~\eqref{e:mu}. Hence, further using translation invariance, we have 
\begin{equ}
\E[\cN^{N,M}[\mu^M](\phi)]=\langle \E[\mu^M(K\ast\mol) \,\mu^M(\mol)], \nabla \phi\ast\mol \rangle\rangle=\langle K\ast\mol, \mol\rangle_{\dot H^1(\T^2_M)}\langle1, \nabla \phi\ast\mol \rangle\,,
\end{equ} 
which is zero since, by integration by parts, $\langle1, \nabla \phi\ast\mol \rangle=0$.  
Now, $\cN^{N,M} [\mu](\phi)$ is quadratic in $\mu$ and its component in the $0$-th chaos is $0$, hence $\nnlin[\mu^M](\phi)\in \wc_2$ and $\cN^{N,M}[\mu^M](\phi)=I_2(\fn^{N,M}_{\phi})$, 
for $\fn^{N,M}_{\phi}$ such that 
\begin{equ}[e:KernelNnlin]
\hat \fn^{N,M}_{\phi}(\ell, m)=\mathcal{K}_{\ell,m}^{N} \phi_{-\ell-m}\,.
\end{equ}
Let $\fh^{N,M}_{\phi}\in \fock_2$ and 
$H^{N,M}[\mu^M](\phi)=I_2(\fh^{N,M}_{\phi})$. 
Then, \cite[Lemma 2.3]{Gubinelli.TurraHyperviscous2020}  implies that 
\begin{equ}
(1-\CL_\theta^M) H^{N,M}[\mu^M](\phi) = (1-\CL_\theta^M) I_2(\fh^{N,M}_{\phi}) = I_2 \left((1 -\tfrac12(-\Delta)^\theta)\fh^{N,M}_{\phi}\right)\,.
\end{equ}
Equating the right hand side above and $\l I_2(\fn^{N,M}_{\phi})$, 
we immediately deduce that~\eqref{e:Poisson} has a unique solution 
which must necessarily satisfy
\begin{equ}[e:PoissonSol]
\hat \fh^{N,M}_{\phi}(\ell, m)=\l\frac{\mathcal{K}_{\ell,m}^{N}}{1 +\frac12(|\ell|^{2\theta}+|m|^{2\theta})}\phi_{-\ell-m}\,,\qquad \text{for all $\ell, m\in \Z^2_M$.}
\end{equ}
In order to compute the energy of $H^{N,M}[\mu^M](\phi)$, notice that by~\cite[Proposition 1.2.7]{nualartMalliavinCalculusRelated2006},
\begin{equ}
D_x H^{N,M}[\mu^M](\phi)= D_x I_2(\fh^{N,M}_{\phi})=2 I_1(\fh^{N,M}_{\phi}(x,\cdot))
\end{equ}
which implies, by linearity of $I_1$, 
\begin{equ}
\mathcal{E}^{N,M} (H^{N,M}[\mu^M](\phi))=4\int_{\T^2_M} \left| I_1 \left((-\Delta_x)^{\frac{1+\theta}{2}} \fh^{N,M}_{\phi}(x,\cdot)\right)\right|^2\dd x\,.
\end{equ}
Consequently, since $I_1$ is an isometry from $\wc_1$ and $\fock_1=\dot H^1(\T^2_M)$, we get 
\begin{equ}
\E[\mathcal{E}^{N,M} (H^{N,M}[\mu^M](\phi))]=4\int_{\T^2_M}\|(-\Delta_x)^{\frac{1+\theta}{2}} \fh^{N,M}_{\phi}(x,\cdot)\|^2_{\dot H^1(\T^2_M)}\dd x
\end{equ}
from which~\eqref{e:EnergyPoisson} simply follows by Plancherel's identity and~\eqref{e:PoissonSol}
\end{proof}

\begin{proof}[of Proposition~\ref{p:EnergyEstimates}]
For both~\eqref{e:Delta} and~\eqref{e:nonlinbound}, we will exploit the It\^o's trick Lemma~\ref{l:itotrick}. 
Let us begin with the former. Set $K^{N,M} [\mu^M] (\phi) \eqdef \mu^M (\phi)$, and 
notice that by~\eqref{e:SNSgent}, it is immediate to verify that 
\begin{equs}
	\CL_\theta^M K^{N,M} [\mu^M] (\phi) =\mu^M \left(-\tfrac12(-\Delta)^{\theta} \phi\right)\,,\quad\text{and}\quad \mathcal{E}^{N,M} (K^{N,M}[\mu^M](\phi) )=\| \phi \|_{\dot{H}^{1+\theta}(\T_M^2)}^2\,.
\end{equs}
Hence, the left hand side of~\eqref{e:Delta} equals
\begin{equs}[e:uTbound]
2\Ex \Big[ \sup_{s \leq t}\Big| \int_0^s \CL_\theta^M K^M [\omega_r^{N,M}] (\phi) \dd r \Big|^p \Big]^{1/p}\lesssim t\phalf  \| \phi \|_{\dot{H}^{1+\theta}(\T_M^2)}\, ,
\end{equs}
where in the last passage we applied~\eqref{e:itotrick}. 

We now turn to~\eqref{e:nonlinbound} for which we proceed similarly 
to~\cite[Proposition 3.15]{gubinelliEnergySolutionsKPZ2018}. 
Let $H^{N,M}$ be the solution to \eqref{e:Poisson} determined in Lemma \ref{l:CaosNnlin}. 
Then 
\begin{equs}[e:TwoSummands]
&\Ex \Big[ \sup_{s \leq t}\Big|  \int_0^s  \l\nnlin [\tw_r] (\phi) \dd r \Big|^p \Big]^{\frac1p}= \Ex \Big[ \sup_{s \leq t}\Big| \int_0^s  ( 1-\CL_\theta^M) H^{N,M} [\tw_r] (\phi) \dd r \Big|^p \Big]^{\frac1p}  \\
&\leq  \Ex \Big[ \sup_{s \leq t}\Big| \int_0^s  H^{N,M} [\tw_r] (\phi) \dd r \Big|^p \Big]^{\frac1p}+\Ex \Big[ \sup_{s \leq t}\Big| \int_0^s  \CL_\theta^M H^{N,M} [\tw_r] (\phi) \dd r \Big|^p \Big]^{\frac1p} \, .
\end{equs}
We will separately estimate the two summands above. 
For the second, we apply once more~\eqref{e:itotrick}, which, together with~\eqref{e:EnergyPoisson}, gives 
\begin{equs}
\Ex \Big[ \sup_{s \leq t}&\Big| \int_0^s  \CL_\theta^M H^{N,M} [\tw_r] (\phi) \dd r \Big|^p \Big]^{\frac2p}\lesssim t\frac{\l^2}{M^4}\sum_{\ell,m}|\ell|^{2+2\theta}|m|^2\frac{ (\K{\ell,m})^2 |\phi_{-\ell-m}|^2}{( 1+ \frac12 (|\ell|^{2\theta}+|m|^{2\theta}))^2} \\
&\lesssim t \frac{1}{M^2}\sum_{k}|k|^4|\phi_k|^2\frac{\l^2}{M^2}\sum_{\ell+m=k}(\molf_{\ell,m})^2\frac{|\ell|^{2\theta}}{( 1+ \frac12 (|\ell|^{2\theta}+|m|^{2\theta}))^2}\\
&\lesssim t \frac{1}{M^2}\sum_{k}|k|^4|\phi_k|^2\frac{\l^2}{M^2}\sum_{\ell}(\molf_{\ell})^2\frac{1}{1+ \frac12 |\ell|^{2\theta}}\leq t\|\phi\|^2_{\dot H^2(\T^2_M)}\frac{\l^2}{M^2}\sum_{|\ell|\leq N}\frac{1}{1+ \frac12 |\ell|^{2\theta}}
\end{equs}
where we  bounded $|\mathcal{K}_{\ell,m}^{N}|\leq \molf_{\ell} |\ell+m|^2/(|\ell||m|)$ and 
applied a simple change of variables. Now, the remaining sum can be controlled via
\begin{equs}
\frac{\l^2}{M^2}\sum_{|\ell|\leq N}\frac{1}{1+ \frac12 |\ell|^{2\theta}}\lesssim \l^2 \int_{|x|\leq N}\frac{\dd x}{1+\frac12|x|^{2\theta}}\lesssim 
\begin{cases}
\l^2 \log N\lesssim 1\,, & \text{if $\theta=1$,}\\
\l^2N^{2-2\theta}\lesssim N^{2\theta-2} \,, & \text{if $\theta\in(0,1)$,}
\end{cases}
\end{equs}
the last inequality being a consequence of~\eqref{e:CouplingConstant}. 

Let us turn to the first summand in~\eqref{e:TwoSummands}. We have 
\begin{equs}[e:firstterm]
\Ex \Big[ \sup_{s \leq t}&\Big| \int_0^s  H^{N,M} [\tw_r] (\phi) \dd r \Big|^p \Big]^{\frac1p}\leq  \Ex \Big[ \Big(\int_0^t  |H^{N,M} [\tw_r] (\phi)| \dd r \Big)^p \Big]^{\frac1p}\\
&\leq t^{1-\frac1p}\Ex \Big[\int_0^t  |H^{N,M} [\tw_r] (\phi)|^p \dd r  \Big]^{1/p}=t \E[|H^{N,M} [\mu^M] (\phi)|^p]^{\frac1p}\\&\lesssim t \E[|H^{N,M} [\mu^M] (\phi)|^2]^{\frac12}=\sqrt{2} t \|\fh^{N,M}_{\phi}\|_{\fock_2}
\end{equs}
where, from the first to the second line we used Jensen's inequality, from the second to the third 
Gaussian hypercontractivity~\cite[Theorem 1.4.1]{nualartMalliavinCalculusRelated2006} and 
the last step is a consequence of~\eqref{e:wcbreakdown} and the fact that, as shown in the proof of 
Lemma~\ref{l:CaosNnlin}, $H^{N,M} [\mu^M] (\phi)=I_2(\fh^{N,M}_{\phi})$ 
for $\fh^{N,M}_{\phi}$ satisfying~\eqref{e:PoissonSol}. 
In turn, the norm of $\fh^{N,M}_{\phi}$ can be estimated via
\begin{equs}
\|&\fh^{N,M}_{\phi}\|_{\fock_2}^2=\frac{\l^2}{M^4}\sum_{\ell,m\in\Z^2_M} |\ell|^2|m|^2\frac{(\mathcal{K}_{\ell,m}^{N})^2}{(1 +\frac12(|\ell|^{2\theta}+|m|^{2\theta}))^2} |\phi_{-\ell-m}|^2\\
&\lesssim \frac{\l^2}{M^2}\sum_{k\in\Z^2_M} |k|^4 |\phi_k|^2 \frac{1}{M^2}\sum_{\ell+m=k}\frac{(\molf_\ell)^2}{(1 +\frac12(|\ell|^{2\theta}+|m|^{2\theta}))^2}\\
&\lesssim \frac{\l^2}{M^2}\sum_{k\in\Z^2_M} |k|^4 |\phi_k|^2 \int_{|x|\leq N}\frac{\dd x}{(1+|x|^{2\theta})^2}\lesssim\|\phi\|_{\dot H^2(\T^2_M)}^2\times
\begin{cases}
\l^2\,,&\text{if $\theta>\tfrac12$,}\\
\l^2\log N \,,&\text{if $\theta=\tfrac12$,}\\
\l^2 N^{2-4\theta} \,,&\text{if $\theta<\tfrac12$,}\\
\end{cases}
\end{equs}
and, for any value of $\theta\in(0,1]$ the right hand side is bounded above by $N^{2\theta-2}\|\phi\|_{\dot H^2(\T^2_M)}^2$.  
\end{proof}

\subsection{The regularised fractional Vorticity equation on $\R^2$}

In this section, we study the regularised fractional vorticity equation~\eqref{e:vsnsN} on the full space $\R^2$.
Our goal is to show, on the one hand that, for $N\in\N$ fixed, it admits a solution and on the other
that such a solution has an invariant measure $\mu$ satisfying~\eqref{e:mu} 
and therefore complete the proof of Theorem~\ref{thm:introSNSstation}. 
\medskip

Throughout this section, $N\in\N$ will be fixed.  For $T>0$ and $\theta\in(0,1]$, we say that
$\omega^N\in C([0,T],\CS'(\R^2))$ is a {\it weak solution}
of~\eqref{e:vsnsN} starting at $\omega_0\in\CS'(\R^2)$ if for all $\phi\in\CS(\R^2)$ 
\begin{equ}[e:SNSweak]
	\omega_t^N(\phi)-\omega_0(\phi)=\half \int_0^t \omega_s^N(-(-\Delta)^\theta\phi)\dd s +\l \int_0^t \nnlin [\wN_s] (\phi)\dd s -
	M_t(\phi)\,.
\end{equ}
where $\nnlin$ is defined according to~\eqref{e:SNSnonlin} and $M_{\cdot}(\cdot)$ is a continuous Gaussian process
whose covariance is given by
\begin{equ}[e:Cov]
	\Ex[M_t(\phi) M_s(\psi)]=(t\wedge s) \langle \phi,\psi\rangle_{\dot{H}^{1+\theta}(\R^2)}\,,\qquad \phi,\psi\in \dot{H}^{1+\theta}(\R^2)
\end{equ}
(so that, formally, ``$M_t(\phi)=\int_0^t \xi (\dd s, (-\Delta)^{\frac{1+\theta}{2}}\phi)$'' 
for a space-time white noise $\xi$ on $\R_+\times\R^2$).
Further, if $\omega_0$ is distributed according to $\mu$ in~\eqref{e:mu}, 
then we will say that the solution is {\it stationary}.

Let us introduce the operator $\ngen$ which is nothing but the $\R^2$ counterpart of $\tgen$ in~\eqref{e:SNSgent} 
and formally represents the generator of~\eqref{e:vsnsN}. 
Once again, it can be written as the sum of two operators, i.e. $\ngen=\cL_\theta+\ngena$, 
whose action of cylinder functions  $F(\omega)=\f{\omega}$  is given by 
\begin{equs}
	\cL_\theta F(\w) &\eqdef \half \sum_{i=1}^n \w (-(-\Delta)^{\theta} \phi_i )  \partial_i f  +  \half\sum_{i,j=1}^n \qvar{ \phi_i, \phi_j}_{\dot H^{1+\theta}(\R^2)}\, \partial^2_{i,j} f\,,\label{e:SNSgens}\\
	\ngena F(\w) &\eqdef - \l \sum_i^n \nnlin [\w](\phi_i)\,\partial_i f .\label{e:SNSgena}
\end{equs}
%
Note that, thanks to the regularisation of the non-linearity (see Assumption~\ref{a:mol}), 
both $\cL_0 F[\w]$ and $\ngena F[\w]$ are well-defined 
for any cylinder function $F$. 

In the following definition, we present the martingale problem associated to $\ngen$.

\begin{definition}\label{d:martprob}
	Let $T>0$, $\Omega=C([0,T], \cS'(\R^2))$ and $\CG=\CB(C([0,T], \cS'(\R^2)))$ 
	the canonical Borel $\sigma$-algebra on it.
	Let $\theta\in(0,1]$, $N\in\N$ and $\mu$ be a random field on $\CS'(\R^2)$.
	We say that a probability measure $\mathbf{P}^N$ on $(\Omega,\CG)$ {\it solves the cylinder martingale problem for $\ngen$
			with initial distribution $\mu$}, if for all cylinder functions $F$
	the canonical process $\omega^N$ under $\mathbf{P}^N$ is such that
	\begin{equ}[e:Mart]
		\cM_t(F)\eqdef F(\omega_t^N) - F(\mu) -\int_0^t \ngen F(\omega^N_s)\dd s
	\end{equ}
	is a continuous martingale.
\end{definition}

As a first result, we determine the connection between the martingale
problem in Definition~\ref{d:martprob} and weak solutions of~\eqref{e:vsnsN}.

\begin{proposition}\label{p:SNSstation}
	Let $\theta\in(0,1]$, $N\in\N$ and $\mu$ be a random field on $\CS'(\R^2)$.
	Then, $\mathbf{P}^N$ is a solution to the cylinder martingale problem for $\ngen$ with initial distribution $\mu$
	if and only if the canonical process $\omega^N$ under $\mathbf{P}^N$ is a weak solution of~\eqref{e:vsnsN}. 
\end{proposition}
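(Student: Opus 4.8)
The plan is to prove the two implications separately, using that for fixed $N$ the nonlinearity $\nnlin[\w](\phi)$ is, by Assumption~\ref{a:mol} and Remark~\ref{rem:a}, a genuine continuous functional of $\w\in\cS'(\R^2)$, so that every drift term below is well-defined and all local martingales encountered are true martingales under the (stationary, Gaussian) law. The bridge between the two formulations is testing the martingale problem against suitably chosen cylinder functions in one direction, and It\^o's formula in the other.

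For the implication ``martingale problem $\Rightarrow$ weak solution'', I would first apply \eqref{e:Mart} to the linear cylinder function $F[\w]=\w(\phi)$, $\phi\in\cS(\R^2)$. Since $\ngen F(\w)$ then coincides with the drift on the right-hand side of \eqref{e:SNSweak} (immediate from \eqref{e:SNSgens}--\eqref{e:SNSgena}, as the Hessian term vanishes for $f(x)=x$), setting $M_t(\phi)\eqdef-\cM_t(F)$ rearranges \eqref{e:Mart} into exactly \eqref{e:SNSweak} and exhibits $M_\cdot(\phi)$ as a continuous martingale. It remains to identify its law. To this end I would test \eqref{e:Mart} against the quadratic function $G[\w]=\w(\phi)^2$: here $\ngen G(\w)=2\w(\phi)\,\ngen F(\w)+\|\phi\|^2_{\dot H^{1+\theta}(\R^2)}$, the last constant coming from $\tfrac12\partial^2_{11}f\cdot\qvar{\phi,\phi}_{\dot H^{1+\theta}(\R^2)}$ in \eqref{e:SNSgens}. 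Comparing $\cM_t(G)$ with the It\^o expansion of $\w_t(\phi)^2$ built from the decomposition \eqref{e:SNSweak} just obtained, the terms involving $\ngen F$ and the stochastic integral $\int_0^t 2\w_s(\phi)\,\dd M_s(\phi)$ cancel, leaving that $\qvar{M(\phi)}_t-t\,\|\phi\|^2_{\dot H^{1+\theta}(\R^2)}$ is a continuous finite-variation martingale starting at $0$, hence identically zero. Polarising with $G[\w]=\w(\phi)\w(\psi)$ gives the cross-bracket $\qvar{M(\phi),M(\psi)}_t=t\,\qvar{\phi,\psi}_{\dot H^{1+\theta}(\R^2)}$.

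Since $M_t$ is linear in its argument, every finite linear combination $\sum_i a_i M_t(\phi_i)=M_t(\sum_i a_i\phi_i)$ is again a continuous martingale with the deterministic bracket $t\,\|\sum_i a_i\phi_i\|^2_{\dot H^{1+\theta}(\R^2)}$; by L\'evy's characterisation each such combination is a time-changed Brownian motion and therefore Gaussian, so the whole family $\{M_t(\phi)\}$ is jointly Gaussian with covariance \eqref{e:Cov}, which is precisely what the definition of weak solution requires. For the converse, given a weak solution I would apply It\^o's formula to $t\mapsto f(\w_t(\phi_1),\dots,\w_t(\phi_n))$ using the decomposition \eqref{e:SNSweak} for each coordinate. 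The first-order (drift) contributions reproduce $\sum_i\ngen(\w\mapsto\w(\phi_i))\,\partial_i f$, while the quadratic covariations $\dd\qvar{M(\phi_i),M(\phi_j)}_t=\qvar{\phi_i,\phi_j}_{\dot H^{1+\theta}(\R^2)}\,\dd t$ supplied by \eqref{e:Cov} reproduce exactly the Hessian term $\tfrac12\sum_{i,j}\qvar{\phi_i,\phi_j}_{\dot H^{1+\theta}(\R^2)}\partial^2_{i,j}f$ in $\cL_\theta$. Collecting these shows $\cM_t(F)=\sum_i\int_0^t\partial_i f\,\dd M_s(\phi_i)$, a continuous martingale, so the martingale problem holds.

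I expect the \textbf{main obstacle} to be the identification of the law of $M$ in the first implication: knowing $M_\cdot(\phi)$ is a continuous martingale with the correct deterministic bracket does not by itself give the precise Gaussian structure of \eqref{e:Cov}, and the clean resolution above relies on the \emph{linearity of $M$ in $\phi$} together with L\'evy's characterisation to upgrade to joint Gaussianity. A secondary point is the rigorous justification of the infinite-dimensional It\^o formula and of the cancellation in the bracket computation; both are routine here because $N$ is fixed, each test function is held fixed, and by Assumption~\ref{a:mol} the map $s\mapsto\nnlin[\w_s](\phi)$ is continuous, so no renormalisation or extra integrability input beyond stationarity is needed.
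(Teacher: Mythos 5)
Your proposal is correct and follows essentially the same route as the paper: the forward direction via It\^o's formula, and the converse via testing the martingale problem against linear and quadratic cylinder functions to identify the drift and the deterministic bracket of $M_\cdot(\phi)$, then concluding Gaussianity from the deterministic quadratic variation. The only cosmetic difference is that you invoke L\'evy's characterisation on linear combinations $M_t(\sum_i a_i\phi_i)$ to get joint Gaussianity, whereas the paper cites the Ethier--Kurtz theorem on continuous martingales with deterministic bracket; these are interchangeable, and your linearity remark is if anything a slightly cleaner way to obtain the \emph{joint} Gaussian structure required by~\eqref{e:Cov}.
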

\begin{proof}
	Notice first that if $\w^N$ is a weak solution of~\eqref{e:vsnsN}, then for any cylinder function $F$, 
	the right hand side of~\eqref{e:Mart} is a martingale by It\^o's formula. Hence, the law of $\w^N$ solves the martingale 
	problem of Definition~\ref{d:martprob}. 
	In order to show that the converse also holds, we follow the strategy of~\cite[Lemma 2.7]{Funaki.QuastelKPZ2015}.
	Let $\mathbf{P}^N$ be a solution to the martingale problem and $\wN$ the canonical process with respect to $\mathbf{P}^N$.
	Let $\phi\in\cS(\R^2)$ and $F_\phi$ be the linear cylinder function defined as $F_\phi(\wN)\eqdef \wN(\phi)$.
	In view of~\eqref{e:Mart}, $\wN$ satisfies
	\begin{equs}[e:Fphi]
		\wN_t(\phi)-\mu(\phi)&=\int_0^t \ngen \w_s (\phi) \dd s +\cM_t(F_\phi)\\
		&=\half \int_0^t \omega_s^N(-(-\Delta)^\theta\phi)\dd s +\l \int_0^t \nnlin [\wN_s] (\phi)\dd s +\cM_t(F_\phi)
	\end{equs}
	the second step being a consequence of the definition of $\ngen$ in~\eqref{e:SNSgens} and~\eqref{e:SNSgena}, and
	where $\cM_t(F_\phi)$ is a continuous martingale. We are left to show that
	for all $\phi$, $\cM_t(F_\phi)$ is Gaussian
	and has covariance given as in~\eqref{e:Cov}.
	To do so, let $\phi,\,\psi\in\cS(\R^2)$,
	and consider the quadratic cylinder function $F_{\phi,\psi}(\wN)\eqdef \wN(\phi)\,\wN(\psi)$.
	Exploiting~\eqref{e:Mart} once more, we see that
	\begin{equ}[e:Fphipsi]
		\cM_t(F_{\phi,\psi})= \wN_t (\phi) \wN_t (\psi)  -\mu(\phi) \mu (\psi) - \int_0^t \ngen F_{\phi,\psi}(\wN_s) \dd s
	\end{equ}
	is a martingale. Let $b_s(\phi)\eqdef \ngen \wN_s (\phi)$ and 
	notice that~\eqref{e:SNSgens} and~\eqref{e:SNSgena} give
	\begin{equ}
		\ngen F_{\phi,\psi}(\wN_s) = \wN_s (\phi) b_s( \psi) +\wN_s (\psi) b_s( \phi) + \qvar{\phi ,\psi}_{\dot{H}^{1+\theta}(\R^2)}\,,
	\end{equ}
	which, once plugged into~\eqref{e:Fphipsi}, provides
	\begin{equs}[e:MartDecomp]
		\cM_t(F_\phi)\cM_t(F_\psi)&-t\qvar{\phi ,\psi}_{\dot{H}^{1+\theta}(\R^2)}\\
		=& \cM_t(F_{\phi,\psi}) - \int_0^t \Big(b_s(\psi) \delta_{s,t}\wN_\cdot (\phi) + b_s(\phi)  \delta_{s,t}\wN_\cdot (\psi)\Big)\dd s\\
		&\qquad - \mu(\phi) \cM_t (F_\phi) - \mu(\psi) \cM_t (F_\phi) + \int_0^t \int_0^t b_s(\phi) b_{\bar{s}} (\psi) \dd s \dd \bar{s}\\
		=& \cM_t(F_{\phi,\psi}) - \int_0^t \Big(b_s(\psi) \int_s^t\dd \cM_{\bar s}(\phi)+ b_s(\phi)  \int_s^t\dd \cM_{\bar s}(\phi)\Big)\dd s\\
		&\qquad - \mu(\phi) \cM_t (F_\phi) - \mu(\psi) \cM_t (F_\phi)\\
		=&\cM_t(F_{\phi,\psi}) - \int_0^t \Big(\int_{0}^{\bar s}b_s(\psi)\dd s\Big) \dd \cM_{\bar s}(\phi)+ \int_0^t \Big(\int_{0}^{\bar s}b_s(\phi)\dd s\Big) \dd \cM_{\bar s}(\psi)\\
		&\qquad - \mu(\phi) \cM_t (F_\phi) - \mu(\psi) \cM_t (F_\phi)
	\end{equs}
	where we introduced the notation $\delta_{s,t}f_\cdot\eqdef f(t)-f(s)$ and exploited~\eqref{e:Fphi} in the second equality.
	Now, all the terms at the right hand side are martingales so that, by definition, $t \qvar{\phi ,\psi}_{\dot{H}^{1+\theta}(\R^2)}$
	is the quadratic covariation of $\cM_t(F_\phi)$ and $\cM_t(F_\psi)$ and clearly~\eqref{e:Cov} holds. 
	For Gaussianity, taking $\psi=\phi$ in~\eqref{e:MartDecomp}, 
	we deduce that $\cM_t(F_\phi)$ is a continuous martingale with deterministic quadratic variation
  which, in view of~\cite[Theorem 7.1.1]{Ethier.KurtzMarkov2009}, implies that, for all $\phi$, $\cM_t(F_\phi)$ is
	Gaussian with independent increments so that the proof is concluded. 
\end{proof}

We now show that the martingale problem of Definition~\ref{d:martprob} starting from $\mu$ as in~\eqref{e:mu} 
admits a solution. Together with the previous result, this implies
the existence of a stationary weak solution to~\eqref{e:vsnsN} whose invariant measure is $\mu$
thus completing the proof of Theorem~\ref{thm:introSNSstation}.

\begin{theorem}\label{thm:InvR2}
	Let $N\in\N$ be fixed, $\theta\in(0,1]$ and $\mu$ the Gaussian process with covariance given by~\eqref{e:mu}.
	The cylinder martingale problem of Definition~\ref{d:martprob} for $\ngen$ with initial distribution $\mu$ 
	has a solution $\mathbf{P}^N$. Further, the canonical process $\w^N$ under $\mathbf{P}^N$ has invariant measure $\mu$.
\end{theorem}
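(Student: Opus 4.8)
The plan is to obtain the solution $\mathbf{P}^N$ on $\R^2$ as a limit of the periodic solutions on $\T^2_M$ constructed in Section~\ref{s:SNSperiodic}, following the strategy of~\cite[Lemma 2.7]{Funaki.QuastelKPZ2015} that was already referenced. Concretely, for each $M\in\N$ let $\mathbf{P}^{N,M}$ denote the law on $C([0,T],\cS'(\R^2))$ of the stationary periodic solution $\omega^{N,M}$ of~\eqref{e:vsnsNT} (extended periodically, or cut off suitably, so as to be viewed as a random element of $\cS'(\R^2)$), started from the Gaussian field $\mu^M$. By Proposition~\ref{p:SNStorinvar} each $\mathbf{P}^{N,M}$ is stationary with one-time marginal $\mu^M$, and $\mu^M\to\mu$ in $\cS'(\R^2)$ as $M\to\infty$ since their covariances~\eqref{e:mu} agree in the $\dot H^1$ pairing in the limit. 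The first step is therefore to establish tightness of the family $\{\mathbf{P}^{N,M}\}_M$ on $C([0,T],\cS'(\R^2))$.

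For tightness I would test against a fixed $\phi\in\cS(\R^2)$ and use the decomposition~\eqref{e:SNSweak}: the time-integral of $\tfrac12\omega^{N,M}(-(-\Delta)^\theta\phi)$ and of $\lambda\nnlin[\omega^{N,M}](\phi)$ are controlled in every $L^p$, uniformly in $M$ (and $N$), by the energy estimates~\eqref{e:Delta} and~\eqref{e:nonlinbound} of Proposition~\ref{p:EnergyEstimates}, while the martingale part $M_t(\phi)$ has the explicit Gaussian covariance~\eqref{e:Cov}. Together with stationarity of the marginal $\mu^M$ (whose moments are uniformly bounded), these bounds give uniform Hölder-type control of $t\mapsto\omega^{N,M}_t(\phi)$, hence tightness in $C([0,T],\R)$ for each $\phi$ by the Kolmogorov criterion; one then upgrades to tightness in $C([0,T],\cS'(\R^2))$ by a Mitoma-type argument, testing against a countable dense family of Schwartz functions and using that the relevant $\cS'$-norms are controlled. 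I extract a subsequence converging weakly to some $\mathbf{P}^N$.

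The second step is to identify the limit $\mathbf{P}^N$ as a solution of the cylinder martingale problem for $\ngen$. Here I pass to the limit in the defining martingale relation~\eqref{e:Mart} for cylinder functions $F=\f{\cdot}$. The key points are: the one-time marginals converge to $\mu$ (so $F(\mu^M)\to F(\mu)$ and the stationary marginal of the limit is $\mu$); the linear and nonlinear drift terms in $\ngen F$ converge because $\nnlin$ is built from the fixed smooth kernels $K\ast\mol_\cdot$ and $\mol_\cdot$ of Assumption~\ref{a:mol}, which are genuine Schwartz functions so that $\nnlin[\cdot](\phi_i)$ is a continuous functional on $\cS'$ and periodisation effects disappear as $M\to\infty$; and the martingale property survives passage to the limit, which requires uniform integrability supplied by the uniform $L^p$ bounds from Proposition~\ref{p:EnergyEstimates}. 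Since by Proposition~\ref{p:SNSstation} the cylinder martingale problem is equivalent to being a weak solution of~\eqref{e:vsnsN}, this yields the desired stationary weak solution with invariant measure $\mu$, completing the proof of Theorem~\ref{thm:introSNSstation}.

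The main obstacle is the limit-identification of the drift, and in particular handling the nonlinearity on the full space: one must verify that the periodic non-linearities $\cN^{N,M}[\omega^{N,M}](\phi)$, whose Fourier representation~\eqref{e:nonlintorus} involves Riemann sums over $\Z^2_M$, converge to the $\R^2$ object $\nnlin[\omega^N](\phi)$ defined by the integral~\eqref{e:SNSnonlin}, uniformly enough to exchange limit and time-integral. The uniform (in $M$) estimate~\eqref{e:nonlinbound}, the compact Fourier support of $\molf$ (Assumption~\ref{a:mol}), and the explicit quadratic/second-chaos structure of $\nnlin[\mu^M](\phi)$ from Lemma~\ref{l:CaosNnlin} are precisely what make this convergence tractable; the remaining care is bookkeeping the convergence $\mu^M\to\mu$ at the level of covariances so that the Gaussian quadratic-variation of $\cM_t(F_\phi)$ matches~\eqref{e:Cov} in the limit.
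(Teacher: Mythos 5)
Your proposal is correct and follows essentially the same route as the paper: tightness of the periodically extended stationary solutions via the It\^o-trick energy estimates of Proposition~\ref{p:EnergyEstimates} together with Kolmogorov and Mitoma's criteria (this is Lemma~\ref{l:tightvsnsNT}), followed by passage to the limit in the cylinder martingale relation, with the nonlinear drift handled through the representation of $\nnlin$ in terms of the smooth kernels $K\ast\mol_\cdot$ and $\mol_\cdot$ guaranteed by Assumption~\ref{a:mol}, and invariance of $\mu$ read off from $\mu^M\to\mu$. The only cosmetic difference is that the paper makes the limit-identification via Skorokhod representation, almost sure convergence and dominated convergence (using Gaussianity of the marginals for the moment bounds), where you invoke uniform integrability from the uniform $L^p$ estimates -- the same idea.
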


The proof of the previous theorem exploits the Galerkin approximation $\w^{N,M}$ of~\eqref{e:vsns}
studied in the previous section. In the next lemma, we show that the sequence is tight in $M$ (for $N$ fixed).

\begin{lemma}\label{l:tightvsnsNT}
	Let $N\in\N$ be fixed, $\theta\in(0,1]$ and $T>0$. 
	With a slight abuse of notation, for all $M\in\N$, let $\omega^{N,M}$ denote the
	periodically extended version of the stationary solution to~\eqref{e:vsnsNT} on $\T^M$.
	Then, the sequence $\{\tw \}_{M \in \N} $ is tight in $C([0,T], \cS' (\R^2))$.
\end{lemma}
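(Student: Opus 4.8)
The plan is to reduce tightness in the distribution-valued path space to tightness of the real-valued projections by Mitoma's criterion, and then to obtain the latter from the Kolmogorov tightness criterion, with the necessary moment bounds supplied by Proposition~\ref{p:EnergyEstimates}. Concretely, Mitoma's tightness criterion for $\cS'(\R^2)$-valued continuous processes asserts that the laws of $\{\omega^{N,M}\}_M$ are tight in $C([0,T],\cS'(\R^2))$ precisely when, for every fixed $\phi\in\cS(\R^2)$, the laws of the scalar processes $\{t\mapsto\omega^{N,M}_t(\phi)\}_M$ are tight in $C([0,T],\R)$. Since $N$ is fixed throughout, this localises the problem to a single scalar process per test function. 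A preliminary point to record is that, because $\omega^{N,M}$ is the $2\pi M$-periodic extension of a field on $\T^2_M$, testing against $\phi\in\cS(\R^2)$ amounts to pairing on the torus with the periodisation $\phi^M(x)\eqdef\sum_{z}\phi(x+2\pi M z)$. Its torus Sobolev norms are Riemann sums $\|\phi^M\|_{\dot H^s(\T^2_M)}^2=\tfrac{1}{M^2}\sum_{k\in\Z^2_M}|k|^{2s}|\hat\phi(k)|^2$ of the corresponding $\R^2$ integrals, and since $\hat\phi$ is Schwartz they are bounded uniformly in $M$ by $C_s\|\phi\|_{\dot H^s(\R^2)}^2$. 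This lets me transfer the torus estimates of Proposition~\ref{p:EnergyEstimates} into bounds that are uniform in $M$ and expressed through $\R^2$ norms of the fixed test function.

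For the scalar tightness I would apply the Kolmogorov criterion, which needs tightness of the time-zero marginals together with a uniform increment bound $\mathbf{E}[|\omega^{N,M}_t(\phi)-\omega^{N,M}_s(\phi)|^p]\le C|t-s|^{1+\beta}$ for some $p>2$ and $\beta>0$. For the marginals, stationarity gives that $\omega^{N,M}_0(\phi)=\mu^M(\phi^M)$ is a centred Gaussian of variance $\|\phi^M\|_{\dot H^1(\T^2_M)}^2\le C\|\phi\|_{\dot H^1(\R^2)}^2$ by \eqref{e:mu}, and a family of centred Gaussians with uniformly bounded variance is tight.

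For the increment bound I would use the weak formulation of~\eqref{e:vsnsNT} to decompose, for $0\le s\le t\le T$,
\[
\omega^{N,M}_t(\phi)-\omega^{N,M}_s(\phi)=\tfrac12\int_s^t\omega^{N,M}_r\!\big(-(-\Delta)^\theta\phi^M\big)\,\dd r-\l\int_s^t\cN^{N,M}[\omega^{N,M}_r](\phi^M)\,\dd r+\delta_{s,t}\cM_\cdot\,,
\]
where $\delta_{s,t}\cM_\cdot$ is the increment of the martingale part, with quadratic variation $\langle\cM\rangle_t=t\,\|\phi^M\|_{\dot H^{1+\theta}(\T^2_M)}^2$ by~\eqref{e:Cov}. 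I would estimate the three contributions separately. By stationarity and the Markov property the increment over $[s,t]$ has the same law as the corresponding quantity over $[0,t-s]$, so Proposition~\ref{p:EnergyEstimates} applies directly: \eqref{e:Delta} controls the dissipative term by $C|t-s|^{1/2}\|\phi\|_{\dot H^{1+\theta}(\R^2)}$, and \eqref{e:nonlinbound} controls the nonlinear term by $C_N|t-s|^{1/2}\|\phi\|_{\dot H^2(\R^2)}$ (using $t\vee t\phalf=t\phalf$ on short intervals and that $N$ is fixed), both uniformly in $M$. The martingale increment is centred Gaussian with variance $(t-s)\|\phi^M\|_{\dot H^{1+\theta}(\T^2_M)}^2$, so its $p$-th moment equals $C_p|t-s|^{p/2}\|\phi^M\|_{\dot H^{1+\theta}(\T^2_M)}^p$. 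Summing these and choosing any $p>2$ yields $\mathbf{E}[|\omega^{N,M}_t(\phi)-\omega^{N,M}_s(\phi)|^p]\le C_{N,p,\phi}|t-s|^{p/2}$ with $p/2>1$, which is exactly the Kolmogorov bound; the criterion then gives tightness of each scalar process, and Mitoma's criterion upgrades this to tightness in $C([0,T],\cS'(\R^2))$.

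The main obstacle, and the step deserving most care, is the passage between the torus and the full plane: the estimates of Proposition~\ref{p:EnergyEstimates} are phrased in terms of torus Sobolev norms of a function living on $\T^2_M$, whereas the test function is a fixed element of $\cS(\R^2)$ as $M\to\infty$. Controlling the periodised norms $\|\phi^M\|_{\dot H^s(\T^2_M)}$ uniformly in $M$—and, relatedly, checking that the Gaussian marginal and the martingale covariance are uniformly bounded—is precisely what renders the constants in both the marginal and increment estimates independent of $M$. Once this uniformity is secured, the remaining arguments are routine applications of the Kolmogorov and Mitoma criteria.
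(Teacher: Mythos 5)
Your proposal follows essentially the same route as the paper: Mitoma's criterion to reduce to scalar projections, the weak formulation to split the increment into dissipative, nonlinear and martingale parts, Proposition~\ref{p:EnergyEstimates} for the first two, a direct Gaussian computation for the third, and the observation that the torus Sobolev norms are Riemann-sum approximations of the $\R^2$ norms to get uniformity in $M$. The extra care you take with the periodisation of the test function and the tightness of the time-zero marginals is consistent with (and slightly more explicit than) the paper's argument, so the proof is correct.
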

\begin{proof}
	Thanks to \cite{mitomaTightnessProbabilities1983},  it suffices to show that for all $\phi \in\cS ( \R^2)$, 
	the sequence $\{t\to\wN_t(\phi)\}_N$ is tight. To do so, we will exploit Kolmogorov's criterion, 
	for which we need to prove that there exist $\alpha>0$ and $p>1$ such that  
	for all $0\leq s<t\leq T$ we have
	\begin{equs}
		\label{e:SNStightM}
		\Ex \left[ | \tw_t(\phi) - \tw_s ( \phi) |^p  \right]^{1/p} \lesssim_\phi (t-s)^{\alpha} \, ,
	\end{equs}
	where the constant hidden into ``$\lesssim$'' depends on $\phi$. 
	Since $\tw$ is Markov and stationary, it is enough to show~\eqref{e:SNStightM} for $s=0$. 
	Notice first that, by construction,
	the time increment of $\tw$ satisfies
	\begin{equs}
		\tw_t&(\phi) - \mu^M ( \phi)\\
		&=\frac12\int_0^t \tw_s ( -(-\Delta)^\theta \phi) \dd s - \l \int_0^t\nnlin [\tw_s] (\phi) \dd s + \int_0^t \xi^M ( \dd s, c  \phi) 
	\end{equs}
	and we will separately focus on each of the terms at the right hand side.
	Gaussian hypercontractivity~\cite[Theorem 1.4.1]{nualartMalliavinCalculusRelated2006} and the definition of $\xi$
	imply that the last term can be bounded as
	\begin{equs}[e:SNSxibound]
		\Ex \Big[\Big|&\int_0^t \xi^M ( s, (-\Delta)^\frac{1+\theta}{2}   \phi) \dd s \Big|^p \Big]^{1/p} \lesssim  \Ex  \Big[\Big|\int_0^t \xi^M ( s, (-\Delta)^\frac{1+\theta}{2}  \phi)\dd s  \Big|^2 \Big]\phalf  \\
		&=  \Big(\int_0^t \qvar{(-\Delta)^\frac{1+\theta}{2} \phi , (-\Delta)^\frac{1+\theta}{2} \phi  }_{L^2 ( \T^2_M)}\dd s\Big)^{\frac12}  =  t^{\frac12}  \| \phi \|_{\dot{H}^{1+\theta} ( \T^2_M ) }\lesssim  t^{\frac12}  \| \phi \|_{\dot{H}^{1+\theta} ( \R^2 ) }\,,
	\end{equs}
	where in the last step, we simply used the fact that the $\dot{H}^{1+\theta} ( \T^2_M )$-norm 
	is simply a Riemann-sum approximation of the $\dot{H}^{1+\theta} ( \R^2 )$ norm. 
	For the remaining two terms, we exploit Lemma \ref{p:EnergyEstimates} and the same argument as above. 
	Collecting what deduced so far, we see that~\eqref{e:SNStightM} holds for all $\phi$, 
	any $p\geq 2$ and $\alpha=1/2$, 
	so that, tightness of the sequence $\{\wN\}_N$ follows at once by Kolmogorov's criterion 
	and~\cite{mitomaTightnessProbabilities1983}. 
\end{proof}

We are now ready to complete the proof of Theorem~\ref{thm:InvR2}.

\begin{proof}[of Theorem~\ref{thm:InvR2}]
Let $\mathbf{P}^{N,M}$ denote the law of the periodically extended version of the 
stationary solution $\tw$ of~\eqref{e:vsnsNT} 
on $C([0,T],\cS'(\R^2))$. 
Since by Lemma~\ref{l:tightvsnsNT}, the sequence $\{\mathbf{P}^{N,M}\}_M$ is tight, we can extract, 
via Prokhorov's theorem, a weakly converging subsequence that, slightly abusing the notation, 
we will still denote by $\{\mathbf{P}^{N,M}\}_M$. Let $\mathbf{P}^N$ be its limit. 
Skorokhod's representation theorem ensures that we can realise the sequence on a proper 
probability space in such a way that 
$\{ \tw \}_M$ converges to $\w^N$, $\mathbf{P}^N$ almost surely in $C([0,T], \cS' (\R^2))$ as $M \rightarrow \infty$. 
We now want to show that $\mathbf{P}^N$ is a solution to the martingale problem for $\ngen$, which amounts to verify that
for any cylinder function $F$ the right hand side of~\eqref{e:Mart} is a continuous martingale. 

As a preliminary step, note that since $\tw\to\w^N$ almost surely in $C([0,T], \cS' (\R^2))$, then, for all $t$, 
$\tw_t\to\w^N_t$ almost surely in $\cS'(\R^2)$. By assumption, $\tw_t$ is distributed according to $\mu^M$ 
and $\mu^M$ converges to $\mu$. Hence $\w^N_t$ is distributed according to $\mu$. 
In other words, $\mu$ is an invariant measure for $\w^N$ and,
as $\mu$ is Gaussian, for any cylinder function $G$, $G(\w^N_t)$ has finite moments 
of all orders. 

Let $\phi_1,\dots,\phi_n\in\cS(\R^2)$ and $F(\omega)=\f{\omega}$ 
be a cylinder function on $\cS'(\R^2)$. By It\^o's formula, for all $t\in[0,T]$,
\begin{equ}
F(\tw_t)  - F(\mu^M)- \int_0^t \tgen F(\tw_s) \dd s\, ,
\end{equ}
is a square-integrable continuous martingale. Therefore, by standard martingale convergence arguments,
 the result follows once we show that 
\begin{equ}[e:Conv]
\Big(F(\tw_t)  - F(\mu^M)- \int_0^t \tgen F(\tw_s) \dd s\Big)- \Big(F(\w_t^{N})-F(\mu)  - \int_0^t \ngen F(\w_s^{N}) \dd s\Big)
\end{equ}
goes to $0$ in, say, mean square with respect to $\mathbf{P}^N$. 
We will first prove that~\eqref{e:Conv} converges to $0$ almost surely.
Since $\tw\to\w^N$ almost surely in $C([0,T], \cS' (\R^2))$, then almost surely for all $r\in[0,T]$ and $n\in\N$ both 
\begin{equs}[e:BasicConv]
\partial^{(n)}\f{\tw} &\to \partial^{(n)}\f{\w^N_r}\,,\\
\tw_r(-(-\Delta)^\theta\phi)&\to\w^N_r(-(-\Delta)^\theta\phi)
\end{equs} 
hold. Further, for every $i,j=1,\dots n$, 
$\qvar{\phi_i,\phi_j}_{\dot H^{1+\theta}(\T^2_M)}\to\qvar{\phi_i,\phi_j}_{\dot H^{1+\theta}(\R^2)}$ 
deterministically as the $\dot H^{1+\theta}(\T^2_M)$-norm is a Riemann-sum approximation 
of the $\dot H^1(\R^2)$-norm.
Hence, by the definitions of $\cL_0^M$ and $\cL_0$ in~\eqref{e:SNSgenst} and~\eqref{e:SNSgens} respectively, 
it follows that almost surely
\begin{equ}
F(\tw_r)\to F(\w^N_r),\quad r\in\{0,t\}\qquad\text{ and }\qquad\int_0^t\cL_0^MF(\tw_s)\dd s\to \int_0^t\cL_0 F(\w^N_s)\dd s\,.
\end{equ}
In light of~\eqref{e:BasicConv}, to show that the same convergence holds for the term containing
$\tgena F(\tw_r)$ and $\ngena  F(\w^N_r)$, 
it suffices to argue that almost surely, for all $i=1,\dots,n$ and $r\in[0,T]$, 
$\cN^{N,M}[\tw_r](\phi_i)\to \nnlin [\w^N_r](\phi_i)$. This in turn is a direct consequence of 
the representation~\eqref{e:NonlinNew} and the fact that the almost sure convergence of 
$\tw$ to $\w^N$ in $C([0,T], \cS' (\R^2))$ ensures that 
both $\tw(K\ast\mol_{\cdot})\to \wN(K\ast\mol_{\cdot})$ and $\tw(\mol_{\cdot})\to \wN(\mol_{\cdot})$.  
Indeed, our choice of the mollifier guarantees that Fourier transform of $\mol$ is supported away from the origin 
so that $K\ast\mol_{\cdot}\in \cS(\R^2)$. 

In conclusion,~\eqref{e:Conv} converges to $0$ almost surely. Moreover, 
each of its summands has finite moments of all orders as for all $r\in[0,T]$ the distribution of 
$\tw_r$ and $\w^N_r$ is Gaussian. 
Therefore, by the dominated convergence theorem,~\eqref{e:Conv} converges to $0$ 
in mean square and the proof is concluded. 
\end{proof}
%
%
%
%

\section{The Vorticity equation on the real plane}

Throughout this section, we will be working with a solution $\mathbf{P}^N$ of the martingale problem 
for $\ngen$ with initial distribution $\mu$, whose canonical process $\w^N$ is, by Proposition~\ref{p:SNSstation}, 
a stationary weak solution of the fractional regularised vorticity equation~\eqref{e:SNSweak} on $\R^2$. 

The goal is to control the behaviour of $\w^N$ in the limit $N\to\infty$. 
To do so, we first need to deepen our understanding
of the generator $\ngen$ and, in particular, determine how it acts on random variables in $L^2(\mu)$.

\subsection{The operator $\ngen$} \label{s:contgen}

This section is devoted to the study of the properties of the operator $\ngen$ on $L^2(\mu)$, 
($\mu$ being the Gaussian process with covariance~\eqref{e:mu}) which is given by the sum of 
$\cL_0$ and $\ngena$ defined in~\eqref{e:SNSgens} and~\eqref{e:SNSgena}, respectively. 
Recall that, as remarked in Section~\ref{sec:Mall}, there exists an isomorphism $I$ between $L^2(\mu)$ and 
the Fock space $\fock$. With a slight abuse of notation, from here on we will denote with the same symbol 
any operator $\CO$ acting on $L^2(\mu)$ and the corresponding operator acting instead on $\fock$, 
where by ``corresponding'' we mean any operator $\mathfrak O$ such that $\CO I(\phi)= I(\mathfrak O\phi)$ 
for all $\phi\in\fock$.

\begin{proposition}\label{p:SNSstraightoperators}
Let $\mu$ be the Gaussian process whose covariance function is given by~\eqref{e:mu}. 
Then, for any $\theta\in(0,1]$, the operator $\cL_\theta$ is symmetric on $L^2(\mu)$, and 
for each $n$, it maps $\wc_n$ to itself. Further, for any $f\in \fock_n$, 
$\cL_\theta f=-\tfrac12 (-\Delta)^\theta f$ so that the Fourier transform of the left hand side equals
\begin{equ}[e:gensf]
\cF(\cL_\theta f)(k_{1:n})=-\tfrac12|k_{1:n}|^{2\theta} \hat f(k_{1:n})\,,\qquad \text{for all $k_{1:n}\in (\R^2)^n$,}
\end{equ}
where $|k_{1:n}|^{2\theta}\eqdef |k_1|^{2\theta}+\dots+|k_n|^{2\theta}$. 
Instead, the operator $\ngena$ is anti-symmetric on $L^2(\mu)$ and it can be written as the sum 
of two operators $\ngenap$ and $\ngenam$, the first mapping $\wc_n$ to $\wc_{n+1}$ while the second 
$\wc_n$ to $\wc_{n-1}$. 
Moreover, the adjoint of $\ngenap$ is $-\ngenam$ and 
for any $f\in \fock_n$ 
the Fourier transform of their action on $f$ is given by 
\begin{align}
\F( \ngenap f ) (k_{1:n+1}) &= \l n \K{k_1,k_2} \hat{f} (k_1 + k_2, k_{3:n+1} )  \label{e:genapf} \\
\F( \ngenam f ) (k_{1:n-1} ) &= 2\l n (n-1)  \int_{\R^2} \molf_{p,k_1-p} \frac{(k_1^{\perp} \cdot p)(k_1\cdot (k_1-p))}{|k_1|^2} \hat f(p,k_1-p,k_{2:n-1})\dd p \label{e:genamf} 
\end{align}
where $\CK^N$ was defined in~\eqref{e:nonlintorusCoeff} and $k_{1:n+1}\in (\R^2)^{n+1}$. 
Strictly speaking the functions at the right hand side need to be symmetrised 
with respect to all permutations of their arguments.   
\end{proposition}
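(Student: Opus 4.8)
The plan is to treat the diffusive part $\cL_\theta$ and the transport part $\ngena$ separately, reading every operator off on the Fock space $\fock=\oplus_n\fock_n$ through the isomorphism $I$. For $\cL_\theta$ the point is that it is the second quantisation of the one-particle operator $-\tfrac12(-\Delta)^\theta$ on $\fock_1=\dot H^1(\R^2)$: by the $\R^2$, general-$n$ analogue of \cite[Lemma~2.3]{Gubinelli.TurraHyperviscous2020} (already invoked for $n=2$ in the proof of Lemma~\ref{l:CaosNnlin}) one has $\cL_\theta I_n(f)=I_n(-\tfrac12(-\Delta)^\theta f)$ for $f\in\fock_n$, where on $\fock_n$ the operator $(-\Delta)^\theta$ is understood as $\sum_{i=1}^n(-\Delta_{x_i})^\theta$. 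In Fourier this is multiplication by $-\tfrac12|k_{1:n}|^{2\theta}$, which is exactly \eqref{e:gensf}; alternatively the identity drops out of evaluating \eqref{e:SNSgens} on the generators $H_n(\mu(h))$ with $\|h\|_{\dot H^1}=1$ and using $H_n'=nH_{n-1}$. Since the multiplier is real and even, $\cL_\theta$ preserves each $\wc_n$ and is symmetric on it, hence symmetric on $L^2(\mu)=\oplus_n\wc_n$, with $\cL_\theta 1=0$.

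For $\ngena$ I would start from the $\R^2$-version of the Fourier representation \eqref{e:SNSgenat}, namely $\ngena F=-\l\int\int \K{i,j}\,\mu_i\mu_j\,D_{-i-j}F\,\dd i\,\dd j$, applied to $F=I_n(f)$. The Malliavin derivative $D_{-i-j}$ lowers the chaos by one (with a factor $n$), and writing $\mu_i=I_1(h_i)$, $\mu_j=I_1(h_j)$ and expanding the triple product $I_1(h_i)\,I_1(h_j)\,I_{n-1}(\cdot)$ via \eqref{e:ProdI} shows that $\ngena I_n(f)$ a priori has components in $\wc_{n+1}$, $\wc_{n-1}$ and $\wc_{n-3}$. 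The $\wc_{n-3}$ component arises by contracting both factors of $\mu_i\mu_j$ against the argument-reduced $f$ and carries the three momenta $i,j,-i-j$; since $\hat f$ is symmetric one may replace $\K{i,j}$ by its symmetrisation over these three momenta, which vanishes by the same divergence-free cancellation that underlies Lemma~\ref{l:H-1invariance}. Thus only the $\wc_{n+1}$- and $\wc_{n-1}$-parts survive, and I \emph{define} these to be $\ngenap I_n(f)$ and $\ngenam I_n(f)$, so that $\ngena=\ngenap+\ngenam$ with the asserted mapping properties.

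The explicit kernels \eqref{e:genapf}--\eqref{e:genamf} then follow by bookkeeping. For $\ngenap$ one keeps the fully uncontracted term: relabelling $(i,j)\mapsto(k_1,k_2)$ and symmetrising leaves the coefficient $\l\,n\,\K{k_1,k_2}$ acting on $\hat f(k_1+k_2,k_{3:n+1})$. For $\ngenam$ one keeps the single-contraction terms: one of the two momenta $i,j$ is paired, through the $\dot H^1$ inner product, against an argument of $\hat f$, leaving the single integral over $p$, the weight $(k_1^\perp\cdot p)(k_1\cdot(k_1-p))/|k_1|^2$ that results from $\K{p,k_1-p}$ together with the $\dot H^1$ pairing, and the constant $2\l\,n(n-1)$. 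I expect this to be the main obstacle: the computation is shallow but very error-prone, because $\mu$ lives on $\dot H^1$ rather than $L^2$, so every Fourier-mode derivative and every contraction hides factors of $|k|^2$ (and the inner product on $\wc_n$ carries the weight $n!$), all of which must be tracked consistently to land precisely on \eqref{e:genapf}--\eqref{e:genamf} after the required symmetrisation of the arguments.

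For antisymmetry and the adjoint relation I would argue abstractly rather than from the kernels. Since $\ngena$ is a first-order differential operator it obeys the Leibniz rule $\ngena(F^2)=2F\,\ngena F$; combining stationarity of $\mu$ (Theorem~\ref{thm:InvR2}, whence $\E[\ngen G]=0$ for every cylinder $G$) with $\E[\cL_\theta G]=\qvar{1,\cL_\theta G}=\qvar{\cL_\theta 1,G}=0$ gives $\E[\ngena G]=0$, so that $\qvar{\ngena F,F}_{L^2(\mu)}=\tfrac12\E[\ngena(F^2)]=0$ and therefore $\ngena^*=-\ngena$. Given the grading just established, testing against $F\in\wc_n$ and $G\in\wc_{n+1}$ forces $\qvar{\ngenap F,G}=-\qvar{F,\ngenam G}$, i.e. $(\ngenap)^*=-\ngenam$; evaluating both sides directly from \eqref{e:genapf}--\eqref{e:genamf} gives an independent check of the constants.
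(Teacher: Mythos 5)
Your proposal is correct, and for the core of the statement it follows the paper's own route: both arguments reduce $\ngena I_n(f)$ (by polarisation, to $f=\otimes^n\phi$) to the product of a second-chaos element with an $(n-1)$-th chaos element, expand via \eqref{e:ProdI}, identify the $\wc_{n+1}$- and $\wc_{n-1}$-components with $\ngenap$ and $\ngenam$, and kill the $\wc_{n-3}$-component by the divergence-free cancellation of Lemma~\ref{l:H-1invariance} --- in the paper this is made precise by recognising the double contraction as $\qvar{\nnlin[(-\Delta)\phi],(-\Delta)\phi}_{\dot H^{-1}(\R^2)}$; note that what must be symmetrised over the three momenta is $|i|^2|j|^2\K{i,j}$, i.e.\ the kernel \emph{together with} the $\dot H^1$ contraction weights, which is a fully symmetric multiple of $\K{i,j}/|i+j|^2$, whereas the symmetrisation of $\K{i,j}$ alone is a different (and not obviously vanishing) quantity. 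Where you genuinely diverge is the antisymmetry/adjoint step: the paper proves $(\ngenap)^*=-\ngenam$ first, by an explicit change of variables in the Fourier representation of $\qvar{\ngenap f,g}$, and obtains antisymmetry of $\ngena$ as a corollary, whereas you derive antisymmetry abstractly from the Leibniz rule and the invariance of $\mu$ (Theorem~\ref{thm:InvR2}, established earlier in the paper, so there is no circularity) and then recover $(\ngenap)^*=-\ngenam$ from the chaos grading. Your route is shorter and conceptually cleaner, but the paper's direct computation doubles as a consistency check on the constants and weights in \eqref{e:genapf}--\eqref{e:genamf}, which are used quantitatively in Lemma~\ref{l:OpBounds}; your abstract argument certifies the adjoint relation for the operators \emph{defined} as chaos components of $\ngena$, and hence validates the displayed kernels only to the extent that they were correctly computed in the first place --- a bookkeeping step you rightly flag as error-prone and do not carry out (the paper is comparably terse there). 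One cosmetic caveat: on $\R^2$ the pointwise Fourier modes $\mu_i$ from which you start are only formal; the paper sidesteps them by working with $\nnlin[\mu](\phi)=I_2(\fn^N_\phi)$ via the pairing representation \eqref{e:NonlinNew}.
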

\begin{proof}
The properties of $\cL_\theta$, including~\eqref{e:gensf}, were shown in a number of references, 
see e.g.~\cite[Ch. 2.4]{GPlecnotes} or~\cite[Lemma 3]{Gubinelli.TurraHyperviscous2020}, and therefore we omit their proof. 
Concerning $\ngena$, let $F(\mu)=\f{\mu}$ be a generic cylinder function. By~\eqref{e:SNSgena}, we have 
\begin{equs}[e:A]
\ngena F(\mu)&=-\l \sum_i \nnlin[\mu](\phi_i)\partial_i f = -\l \nnlin [\mu] \Big( \sum_i \partial_i f \phi_i   \Big) \\
&=  -\l \nnlin [ \mu] (DF)=-\l\int_{\R^2} \nnlin [ \mu] (x) D_xF \dd x
\end{equs}
where we exploited the definition of the Malliavin derivative in~\eqref{e:MalliavinD}. 

Let us first show the decomposition in $\ngenap$ and $\ngenam$ in~\eqref{e:genapf} and~\eqref{e:genamf}, respectively. 
By polarisation it suffices to take $F(\mu)=I_n(f)$ 
for $f$ of the form $\otimes^{n}\phi$ and $\phi\in\dot H^1(\R^2)$. 
Note that the Malliavin derivative of $F$ satisfies
\begin{equ}
D_x F(\mu)=n I_{n-1}\big(\otimes^{n-1}\phi\big) \phi(x)
\end{equ}
(see e.g.~\cite[proof of Lemma 3.5]{Cannizzaro.etal2D2021}). Therefore, plugging the previous into~\eqref{e:A}, we get
\begin{equ}
\ngena F(\mu)=-\l\int_{\R^2} \nnlin [ \mu] (x) D_xF \dd x=-n \l \nnlin [ \mu] (\phi) I_{n-1}\big(\otimes^{n-1}\phi\big)\,.
\end{equ}
Arguing as in the proof of Lemma~\ref{l:CaosNnlin}, it is not hard to see that $\nnlin [ \mu] (\phi)\in \wc_2$ 
and $\nnlin [ \mu] (\phi)= I_2(\fn^{N}_{\phi})$, 
the Fourier transform of $\fn^{N}_{\phi}$ being given by the right hand side of~\eqref{e:KernelNnlin} 
(though for $\ell,m\in\R^2$). 
Therefore, 
\begin{equs}[e:AIexp]
\ngena F(\mu)= -n \l I_2(\fn^{N}_{\phi}) I_{n-1}\big(\otimes^{n-1}\phi\big)=&-n\l I_{n+1}(\fn^{N}_{\phi}\otimes_0 \otimes^{n-1}\phi)\\
& -2n(n-1)\l I_{n-1}(\fn^{N}_{\phi}\otimes_1 \otimes^{n-1}\phi) \\
&-n(n-1)(n-2)\l I_{n-3}(\fn^{N}_{\phi}\otimes_2 \otimes^{n-1}\phi)
\end{equs}
where the last equality is a consequence of~\eqref{e:ProdI}. 
It is not hard to see, by taking Fourier transforms and applying Plancherel's identity, that the 
first term indeed equals $\ngenap I_n(f)$, while the second $\ngenam I_n(f)$, so that in particular 
$\ngenap$ and $\ngenam$ map $\wc_n$ into $\wc_{n+1}$ and $\wc_{n-1}$ respectively. 
We claim that instead the last term vanishes. Indeed by~\eqref{e:contraction}, we have 
\begin{equ}
\fn^{N}_{\phi}\otimes_2 \otimes^{n-1}\phi(x_{1:n-3})=\prod_{i=1}^{n-3}\phi(x_i)\int_{(\R^2)^2}\qvar{\nabla \fn^{N}_{\phi}(x,y), \nabla \phi(x)\phi(y)}\dd x\dd y
\end{equ}
Applying Plancherel's identity and the definition of $\fn^{N}_{\phi}(x,y)$, we see that the integral above equals
\begin{equs}
&\int_{(\R^2)^2}|k_1|^2|k_2|^2\hat\fn^{N}_{\phi}(k_1,k_2)\phi_{k_1}\phi_{k_2}\dd k_1\dd k_2\\
&=\int_{(\R^2)^2}|k_1|^2|k_2|^2\mathcal{K}_{k_1,k_2}^{N} \phi_{-k_1-k_2} \phi_{k_1}\phi_{k_2}\dd k_1\dd k_2
=\qvar{\cN^N[(-\Delta\phi)], (-\Delta)\phi}_{\dot H^{-1}(\R^2)} 
\end{equs}
and the right hand side is equal to $0$ by Lemma~\ref{l:H-1invariance}. 

We now show that $\ngenap$ is the adjoint of $-\ngenam$. 
For $F = \sum_n I_n(f_n)$ and $G = \sum_n I_n(g_n)$ we have 
\begin{equs}
  &\Ex \left[ \ngenap F G \right] = \sum_{n,m} \Ex \left[ I_{n+1} (\ngenap f_n) I_m(g_{m}) \right] = \sum_n (n+1)! \qvar{\ngenap f_n,g_{n+1}}_{\Gamma L^2_{n+1}}\\
  &\Ex \left[  F \ngenam G \right] = \sum_{n,m} \Ex \left[ I_{n} ( f_n) I_{m-1}(\ngenam g_{m}) \right] = \sum_n n! \qvar{ f_n,\ngenam g_{n+1}}_{\Gamma L^2_{n+1}}\,,
\end{equs}
which is a consequence of orthogonality of different Wiener-chaoses. Therefore, to prove that the two 
right hand sides above are indeed equal, it suffices to verify that 
\begin{equ}
(n+1)\qvar{\ngenap f_n,g_{n+1}}_{\Gamma L^2_{n+1}} = -\qvar{ f_n,\ngenam g_{n+1}}_{\Gamma L^2_{n}}\,.
\end{equ} 
By \eqref{e:genapf} (modulo permutations), the left hand side is given by
\begin{equs}[e:aplusasym]
	&4\pi \l n (n+1) \int \left(  \Pi_{i=1}^{n+1} |k_i|^2  \right) \K{k_1,k_2} \hat{f} (k_1 + k_2, k_{3:n+1} ) \hat{g}_{n+1}(k_{1:n+1}) \dd k_{1:n+1}\, .
\end{equs}
Then, by a simple change of variables the previous integral is
\begin{equs}
	&\int \left(\Pi_{i=1, i \neq 2}^{n+1} |k_i|^2 \right) |k_2^\prime - k_1|^2 \K{k_1,k_2^\prime - k_1} \hat{f} (k_2^\prime, k_{3:n+1} ) \hat{g}_{n+1}(k_1, k_2^\prime - k_1 , k_{3:n+1}) \dd k_1 \dd k_2^\prime \dd k_{3:n+1}\\
	&=\int \left(\Pi_{i=1}^{n} |k_i|^2 \right) \frac{|k_1 - p|^2 |p|^2}{|k_1|^2} \K{p,k_1 - p} \hat{f} (k_{1:n}) \hat{g}_{n+1}(p, k_1 - p , k_{2:n}) \dd p \dd k_{1:n}\\
	&=\frac{1}{2\pi} \int \left(\Pi_{i=1}^{n} |k_i|^2 \right)  \hat{f} (k_{1:n}) \int \molf_{p,k_1-p} \frac{(p^\perp \cdot k_1)(k_1 \cdot (k_1-p))}{|k_1|^2} \hat{g}_{n+1}(p, k_1 - p , k_{2:n}) \dd p \dd k_{1:n} \\
	&=-\frac{1}{2\pi} \int \left(\Pi_{i=1}^{n} |k_i|^2 \right)  \hat{f} (k_{1:n}) \int \molf_{p,k_1-p} \frac{(p \cdot k_1^\perp)(k_1 \cdot (k_1-p))}{|k_1|^2} \hat{g}_{n+1}(p, k_1 - p , k_{2:n}) \dd p \dd k_{1:n} \, ,
\end{equs}
from which the result follows. 
Further, as an immediate corollary of $(\ngenap)^*=-\ngenam$, we also deduce that $\ngena$ is antisymmetric 
so that the proof of the statement is completed. 
\end{proof}

\subsection{Tightness and upper bound}\label{s:TightUp}

Following techniques similar to those exploited in Section~\ref{s:SNSperiodic}, we establish tightness 
for the sequence $\{\w^N\}_N$ of solutions to the stationary regularised vorticity equation 
under assumption~\eqref{e:CouplingConstant}. 
For $\theta=1$, we also derive an order one upper bound on the integral in time of the non-linearity.

\begin{theorem}\label{t:tightvsnsN}
Let $\theta\in(0,1]$. 
For $N\in\N$, let $\w^N$ be a stationary solution to~\eqref{e:SNSweak} on $\R^2$ with coupling constant $\l$ 
chosen according to~\eqref{e:CouplingConstant}, started from the 
Gaussian process $\mu$ with covariance given by~\eqref{e:mu}. For $\phi\in\cS(\R^2)$ and $t\geq 0$,  
set 
\begin{equ}[e:IntNonlin]
\cB^N_t(\phi)\eqdef \l\int_0^t\nnlin_t[\w^N_s](\phi)\dd s\,.
\end{equ}
Then, for any $T>0$, the couple $(\w^N, \cB^N)$ is tight in the space $C([0,T],\cS'(\R^2))$. 
Moreover, for $\theta=1$, any limit point $(\w,\cB)$ is such that for all $p\geq 2$ 
there exists a constant $C=C(p)$ such that 
for all $\phi\in\cS(\R^2)$
\begin{equ}[e:UpperB]
\mathbf{E}\Big[\Big|\cB_t(\phi)\Big|^p\Big]^\frac1p\leq C (t\vee t^\frac12)\|\phi\|_{\dot H^2(\R^2)}\,,
\end{equ} 
while, for $\theta\in(0,1)$, for all $p\geq 2$ and $\phi\in\cS(\R^2)$ 
\begin{equ}[e:LimNtheta]
\lim_{N\to\infty} \mathbf{E}\Big[\sup_{s\leq t}\Big|\cB^N_s(\phi)\Big|^p\Big]^\frac1p=0\,.
\end{equ}
%
%
%
\end{theorem}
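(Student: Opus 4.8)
The plan is to transfer the $M$- and $N$-uniform estimates of Proposition~\ref{p:EnergyEstimates} from the torus to $\R^2$, and then to read off all three assertions from them. For fixed $N$, recall from the construction in Theorem~\ref{thm:InvR2} that (after a Skorokhod representation) the periodically extended stationary solutions $\tw$ converge almost surely in $C([0,T],\cS'(\R^2))$ to $\wN$ as $M\to\infty$, and that along this limit the $\dot H^s(\T^2_M)$-norms in~\eqref{e:Delta}--\eqref{e:nonlinbound} converge to the corresponding $\dot H^s(\R^2)$-norms. Writing $\cB^{N,M}_s(\phi)\eqdef\l\int_0^s\cN^{N,M}[\tw_r](\phi)\dd r$ and using the representation~\eqref{e:NonlinNew} exactly as in the proof of Theorem~\ref{thm:InvR2}, the almost sure convergence of $\tw$ to $\wN$ forces $\cB^{N,M}_\cdot(\phi)\to\cB^N_\cdot(\phi)$ in $C([0,T])$. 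Since the constants in Proposition~\ref{p:EnergyEstimates} do not depend on $M$ or $N$, Fatou's lemma then yields, for every $p\geq 2$, a constant $C=C(p)$ independent of $N$ such that for all $\phi\in\cS(\R^2)$ and $t\in[0,T]$,
\begin{equs}[e:R2est]
\mathbf{E}\Big[\Big|\int_0^t\wN_r(-(-\Delta)^\theta\phi)\dd r\Big|^p\Big]^{1/p}&\leq C\,t^{1/2}\,\|\phi\|_{\dot H^{1+\theta}(\R^2)}\,,\\
\mathbf{E}\Big[\sup_{s\leq t}\big|\cB^N_s(\phi)\big|^p\Big]^{1/p}&\leq C\,N^{\theta-1}(t\vee t^{1/2})\,\|\phi\|_{\dot H^{2}(\R^2)}\,.
\end{equs}

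For tightness I would combine~\eqref{e:R2est} with a bound on the Gaussian part. From the weak formulation~\eqref{e:SNSweak}, the increment $\wN_t(\phi)-\wN_0(\phi)$ is the sum of the drift term, of $\cB^N_t(\phi)$, and of $-M_t(\phi)$; the last is Gaussian with covariance~\eqref{e:Cov}, so by Gaussian hypercontractivity its $p$-th moment is controlled by $t^{1/2}\|\phi\|_{\dot H^{1+\theta}(\R^2)}$, exactly as in~\eqref{e:SNSxibound}. By stationarity and the Markov property of $\wN$ the same bounds control the time increments $\wN_t(\phi)-\wN_s(\phi)$ and $\cB^N_t(\phi)-\cB^N_s(\phi)$ by $C((t-s)\vee(t-s)^{1/2})\|\phi\|_{\dot H^{2}(\R^2)}$, uniformly in $N$ since $N^{\theta-1}\leq 1$ for $\theta\in(0,1]$. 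As the exponent $1/2$ dominates for small increments, Kolmogorov's continuity criterion (with $p$ large, $\alpha=1/2$) gives estimates of the form~\eqref{e:SNStightM} for both $\wN(\phi)$ and $\cB^N(\phi)$; tightness of the couple $(\wN,\cB^N)$ in $C([0,T],\cS'(\R^2))$ then follows from~\cite{mitomaTightnessProbabilities1983}, precisely as in Lemma~\ref{l:tightvsnsNT}.

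The last two assertions are immediate corollaries of the second bound in~\eqref{e:R2est}. For $\theta=1$ one has $N^{\theta-1}=1$, so $\mathbf{E}[|\cB^N_t(\phi)|^p]^{1/p}\leq C(t\vee t^{1/2})\|\phi\|_{\dot H^2(\R^2)}$ uniformly in $N$; passing to a limit point $(\w,\cB)$ through a further Skorokhod representation and applying Fatou's lemma at the fixed time $t$ yields~\eqref{e:UpperB}. For $\theta\in(0,1)$ instead $N^{\theta-1}\to0$, so the right-hand side of the supremum bound tends to $0$ as $N\to\infty$, which is exactly~\eqref{e:LimNtheta}.

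I expect the main obstacle to be the first step, i.e. the careful passage of the torus estimates to $\R^2$: one must justify interchanging the $M\to\infty$ limit with the time supremum and the $p$-th moment. The cleanest route is to include $\cB^{N,M}$ alongside $\tw$ in the Skorokhod coupling of Theorem~\ref{thm:InvR2}, deduce almost sure convergence of $\sup_{s\leq t}|\cB^{N,M}_s(\phi)|$, and then invoke Fatou; the convergence of the nonlinear pairing is guaranteed by the representation~\eqref{e:NonlinNew} together with the fact that $K\ast\mol_\cdot\in\cS(\R^2)$ under Assumption~\ref{a:mol}. Everything else reduces to the uniform estimates already established in Proposition~\ref{p:EnergyEstimates}.
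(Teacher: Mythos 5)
Your argument is correct and arrives at the same key uniform estimate \eqref{e:UBtheta} as the paper, but by a different mechanism. The paper does not revisit the torus at this stage: it asserts that the It\^o trick (Lemma~\ref{l:itotrick}), Lemma~\ref{l:CaosNnlin} and Proposition~\ref{p:EnergyEstimates} hold \emph{mutatis mutandis} on $\R^2$ (drop the superscript $M$, replace Riemann sums by integrals) and applies them directly to the stationary solution $\w^N$ built in Theorem~\ref{thm:InvR2}; tightness and the two conclusions then follow, exactly as in your last two paragraphs, from Mitoma's and Kolmogorov's criteria together with the dichotomy $N^{\theta-1}=1$ for $\theta=1$ versus $N^{\theta-1}\to 0$ for $\theta<1$. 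You instead keep all the stochastic-analysis input on $\T^2_M$, where it is rigorously established, and transport the bounds to $\R^2$ through the Skorokhod coupling of Theorem~\ref{thm:InvR2} plus Fatou. What your route buys is that you never have to justify the forward/backward martingale decomposition behind Lemma~\ref{l:itotrick} for the full-space process, which is only defined as a solution of a martingale problem — a point the paper leaves implicit. What it costs is the limit interchange you correctly flag: the pointwise-in-time a.s.\ convergence $\cN^{N,M}[\tw_r](\phi)\to\nnlin[\w^N_r](\phi)$ established in the proof of Theorem~\ref{thm:InvR2} must be upgraded to a.s.\ convergence of $\sup_{s\leq t}|\cB^{N,M}_s(\phi)|$ (e.g.\ by dominating the integrand in $r$, using the representation~\eqref{e:NonlinNew} and the local uniformity in time of the convergence in $C([0,T],\cS'(\R^2))$), together with the convergence of the $\dot H^s(\T^2_M)$-norms to their $\R^2$ counterparts; these additions are of the same order of difficulty as steps the paper already carries out there. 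Both routes conclude identically from \eqref{e:R2est}.
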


\begin{remark}\label{rem:UB}
For $\theta=1$, the previous theorem proves both the tightness of the sequence $\{(\wN, \cB^N)\}_N$ 
stated in Theorem~\ref{thm:Main} and 
the upper bound in~\eqref{e:UpperLowerB}. The latter can be directly verified by 
considering~\eqref{e:UpperB} with $p=2$ and applying 
the Laplace transform at both sides. 
\end{remark}

\begin{proof}
The proof follows the same steps and computations performed in Section~\ref{s:invmeas} 
for Lemma~\ref{l:tightvsnsNT}. 
More precisely, the statements of Lemma~\ref{l:itotrick} (the It\^o trick), Proposition~\ref{p:EnergyEstimates} 
and Lemma~\ref{l:CaosNnlin} 
hold {\it mutatis mutandis} in the non-periodic case - it suffices to remove the superscripts $M$, 
replace every instance of $\T^2_M$ with $\R^2$ and  
substitute the weighted Riemann-sums with integrals. 
Hence, we deduce that for any $\phi\in\cS(\R^2)$ and any $p\geq 2$
\begin{equ}[e:UBtheta]
\mathbf{E}\Big[\sup_{s\leq t}\Big|\cB^N_s(\phi)\Big|^p\Big]^\frac1p\lesssim N^{2\theta-2} (t\vee t^\frac12)\|\phi\|_{\dot H^2(\R^2)}\,.
\end{equ}
which implies tightness for $\cB^N$ for $\theta\in(0,1]$ by Mitoma's and Kolmogorov's criteria, 
and~\eqref{e:LimNtheta} for $\theta\in(0,1)$ and~\eqref{e:UpperB} for $\theta=1$. 
Moreover, arguing as in the proof of Lemma~\ref{l:tightvsnsNT}, one sees that~\eqref{e:SNStightM} 
holds for $\wN$. By invoking once more Mitoma's and Kolmogorov's criteria we
conclude that tightness holds also for $\wN$. 
\end{proof}

\subsection{Lower bound on the nonlinearity for $\theta=1$}\label{s:lowerbound}

As shown in Theorem~\ref{t:tightvsnsN}, the choice of the coupling constant $\l$ in~\eqref{e:CouplingConstant} 
ensures tightness of the sequence $\{\w^N\}_N$ of stationary solutions to~\eqref{e:SNSweak} on $\R^2$ 
and, for $\theta=1$, provides an upper bound on the integral in time of the non-linearity. 
In the proposition below, we determine a matching (up to constants) lower bound on its Laplace transform 
thanks to which the proof of Theorem~\ref{thm:Main} is complete.

\begin{proposition}\label{p:lowerboundsns}
In the same setting as Theorem~\ref{t:tightvsnsN}, let $\theta=1$ and $\cB$ be any limit point of the sequence 
$\cB^N$ in~\eqref{e:IntNonlin}. Then, there exists a constant $C>0$ such that 
for all $\kappa>0$ and $\phi\in\cS(\R^2)$ the lower bound in~\eqref{e:UpperLowerB} holds. 
\end{proposition}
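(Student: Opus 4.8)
The plan is to convert the Laplace-transformed second moment of the additive functional $\cB^N$ into a resolvent quadratic form for the generator $\ngen$, and then to bound the latter from below by a variational argument. Writing $g^N_\phi\eqdef\l\,\nnlin[\cdot](\phi)$, which by Lemma~\ref{l:CaosNnlin} is a mean-zero element of $\wc_2$, stationarity of $\w^N$ together with the semigroup representation of $\cB^N$ gives the identity
\begin{equ}
\int_0^\infty e^{-\la t}\,\Ex\big[|\cB^N_t(\phi)|^2\big]\,\dd t=\frac{2}{\la^2}\,\qvar{g^N_\phi,(\la-\ngen)^{-1}g^N_\phi}_{L^2(\mu)}\,.
\end{equ}
Since the upper bound in Theorem~\ref{t:tightvsnsN} furnishes $L^p$-moment bounds on $\cB^N_t(\phi)$ that are uniform in $N$ for some $p>2$, the family $\{|\cB^N_t(\phi)|^2\}_N$ is uniformly integrable; together with the convergence in law along the chosen subsequence this yields convergence of the second moments, and hence of their Laplace transforms. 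It therefore suffices to prove the lower bound in~\eqref{e:UpperLowerB} for $\cB^N$ uniformly in $N$ and then pass to the limit.

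By Proposition~\ref{p:SNSstraightoperators}, $\ngen=\cL_\theta+\ngena$ with $\cL_\theta$ symmetric and nonpositive and $\ngena$ antisymmetric, so the standard variational (sector) formula for the resolvent quadratic form applies and yields, for every test function $f$,
\begin{equ}
\qvar{g^N_\phi,(\la-\ngen)^{-1}g^N_\phi}\geq \frac{\qvar{g^N_\phi,f}^2}{\qvar{f,(\la-\cL_\theta)f}+\qvar{\ngena f,(\la-\cL_\theta)^{-1}\ngena f}}\,.
\end{equ}
The natural choice is the solution of the diagonal (Gaussian) problem, $f\eqdef(\la-\cL_\theta)^{-1}g^N_\phi\in\wc_2$, for which $\qvar{g^N_\phi,f}=\qvar{f,(\la-\cL_\theta)f}=D_N$, where $D_N\eqdef\qvar{g^N_\phi,(\la-\cL_\theta)^{-1}g^N_\phi}$. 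The lower bound then reduces to $D_N/(1+\mathrm{pen}_N/D_N)$ with $\mathrm{pen}_N\eqdef\qvar{\ngena f,(\la-\cL_\theta)^{-1}\ngena f}$, so it remains to bound $D_N$ from below and the penalty $\mathrm{pen}_N$ from above by a constant multiple of $D_N$, both uniformly in $N$ and $\la$.

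The quantity $D_N$ is explicit: since $\cL_\theta$ acts diagonally on $\wc_2$ by~\eqref{e:gensf} and $g^N_\phi=\l I_2(\fn^N_\phi)$ with kernel~\eqref{e:KernelNnlin}, a computation as in Lemma~\ref{l:CaosNnlin} represents $D_N$ as an integral of $\l^2(\K{\ell,m})^2|\ell|^2|m|^2|\phi_{-\ell-m}|^2/(\la+\tfrac12(|\ell|^{2\theta}+|m|^{2\theta}))$. For $\theta=1$ the contribution of the region where $\ell,m,\ell+m$ all lie in the annulus $\{2/N<|\cdot|<N/2\}$, on which Assumption~\ref{a:mol} guarantees $|\molf|\geq c_\rho$, produces a factor of order $\log N$ which, to leading order as $N\to\infty$, is insensitive to $\la$. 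Hence $D_N\geq c\,\l^2\log N\,\|\phi\|_{\dot H^2(\R^2)}^2=c\,\hat\lambda^2\,\|\phi\|_{\dot H^2(\R^2)}^2$ for $N$ large, uniformly in $\la$, using $\l^2\log N=\hat\lambda^2$ from~\eqref{e:CouplingConstant}; it is precisely here that the quantitative lower bound~\eqref{ass:rho} on $\molf$ is needed.

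Finally, and this is the crux of the argument, one must control the penalty. Splitting $\ngena f=\ngenap f+\ngenam f$ into its $\wc_3$ and $\wc_1$ components (Proposition~\ref{p:SNSstraightoperators}) and using that $\cL_\theta$ preserves each chaos, $\mathrm{pen}_N$ decomposes into a third- and a first-chaos contribution. The lower ($\wc_1$) term is subleading; the main difficulty is the raising ($\wc_3$) term $\qvar{\ngenap f,(\la-\cL_\theta)^{-1}\ngenap f}$. Here the explicit form~\eqref{e:genapf} of $\ngenap$ carries an extra factor $\l$, and the operator bounds of Lemma~\ref{l:OpBounds} show that the associated third-chaos integral grows at most like $\log N$; the extra $\l^2\sim 1/\log N$ then exactly absorbs this logarithm, so that $\mathrm{pen}_N\leq C\,D_N$ uniformly in $N$ and $\la$. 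I expect this third-chaos estimate — establishing that the raising part of the generator is dominated by the diagonal Dirichlet form uniformly in $N$, which is exactly where the weak-coupling scaling is essential — to be the main obstacle. Combining the three estimates gives $\qvar{g^N_\phi,(\la-\ngen)^{-1}g^N_\phi}\geq C^{-1}\hat\lambda^2\|\phi\|_{\dot H^2(\R^2)}^2$ for $N$ large, and multiplying by $2/\la^2$ and letting $N\to\infty$ yields the claimed lower bound for all $\la>0$.
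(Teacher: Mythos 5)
Your proposal follows essentially the same route as the paper: the resolvent identity from \cite[Lemma 5.1]{Cannizzaro.etal2D2021}, the variational formula restricted to a second-chaos test function solving the free Poisson equation $(\la-\cL_0)f=\l\fn^N_\phi$, the operator bounds of Lemma~\ref{l:OpBounds} to absorb the off-diagonal penalty into the Dirichlet form, the explicit $\log N$ divergence of the diagonal term cancelling $\lambda_{N,1}^2\sim 1/\log N$, and passage to the limit via uniform integrability. The only place you are slightly vaguer than the paper is the lower bound on $D_N$: restricting to the annulus alone is not quite enough since the kernel $\K{\ell,m}$ degenerates at certain angles, so one must (as the paper does with the sector $\CC^N_k$) also restrict the angular variable to keep $(\ell^\perp\cdot(\ell+m))(m\cdot(\ell+m))$ bounded below — a routine refinement of the step you describe.
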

\begin{proof}
For $N\in\N$, let $\cB^N$ be defined according to~\eqref{e:IntNonlin}, 
By \cite[Lemma 5.1]{Cannizzaro.etal2D2021}, for $N\in\N$ we have 
 \begin{equ}[e:laplaceqvar]
    \int_0^{\infty} e^{-\la t} \Ex \Big[ \Big| \cB^N_t(\phi) \Big|^2 \Big] \dd t = \frac{2}{\la^2} \E\Big[\nnlin [\mu] (\phi) (\la - \mathcal{L}^N )^{-1} \nnlin [\mu] (\phi) \Big] \,.
\end{equ}
Thanks to~\cite[Lemma 5.2]{Cannizzaro.etal2D2021} and the isometry $I$ introduced in Section~\ref{sec:Mall}, 
the right hand side above equals
\begin{equs}[e:variationalvsnsN]
&\frac{2}{\la^2 } \sup_{G\in L^2(\mu)} \left\{ 2\E[\lambda_{N,1} \nnlin [\mu] (\phi) G] - \E[G(\la-\cL_0)G ] - \E[\ngena G (\la - \cL_0)^{-1}\ngena G]  \right\}\\
&=\frac{2}{\la^2 } \sup_{g\in \fock} \left\{ 2\qvar{\lambda_{N,1} \fn^N_\phi, g}_{\fock} - \qvar{g,(\la-\cL_0)g }_{\fock} - \qvar{\ngena g, (\la - \cL_0)^{-1}\ngena g}_{\fock}  \right\}
\end{equs}
where $\fn^N_\phi$ is such that $\nnlin [\mu] (\phi)=I_2(\fn^N_\phi)$ and its Fourier transform is 
given by the right hand side of~\eqref{e:KernelNnlin} (for $\ell,m\in\R^2$). 
We can further lower bound~\eqref{e:variationalvsnsN} by restricting to $g$ to $\fock_2$ 
for which, by orthogonality of different chaoses and the properties of $\ngenap$ and 
$\ngenam$ determined in Proposition~\ref{p:SNSstraightoperators}, we have
\begin{equ}
\qvar{\ngena g, (\la - \cL_0)^{-1}\ngena g}_{\fock_2}=\qvar{\ngenap g, (\la - \cL_0)^{-1}\ngenap g}_{\fock_3}+\qvar{\ngenam g, (\la - \cL_0)^{-1}\ngenam g}_{\fock_1}\,.
\end{equ}
Summarising, the left hand side of~\eqref{e:laplaceqvar} is lower bounded by 
\begin{equs}[e:LB2]
\frac{2}{\la^2 } \sup_{g\in \fock_2}\Big\{& 2\qvar{\lambda_{N,1} \fn^N_\phi, g}_{\fock_2} - \qvar{g,(\la-\cL_0)g }_{\fock_2}  \\
&-\qvar{ g, -\ngenam(\la - \cL_0)^{-1}\ngenap g}_{\fock_2}-\qvar{ g, -\ngenap(\la - \cL_0)^{-1}\ngenam g}_{\fock_2}  \Big\}
\end{equs}
where we further exploited that the adjoint of $\ngenap$ is $-\ngenam$ and vice versa.

The operators $-\ngenam(\la - \cL_0)^{-1}\ngenap$ and $-\ngenap(\la - \cL_0)^{-1}\ngenam$, even though 
explicit, are difficult to handle since they are not diagonal in Fourier space, meaning that their 
Fourier transform cannot be expressed in terms of an explicit multiplier. 
Nevertheless, the following lemma, whose proof we postpone to the end of the section, 
ensures that they can be bounded by one.

\begin{lemma}\label{l:OpBounds}
There exists a constant $C>0$ independent of $N$ such that for any $g\in\fock_2$, 
the following bound hold 
\begin{equ}[e:MainOpBound]
\qvar{ g, -\ngenam(\la - \cL_0)^{-1} \ngenap g }_{\fock_2}\vee \qvar{g, -\ngenap(\la - \cL_0)^{-1} \ngenam g }_{\fock_2}\leq C \qvar{(-\cL_0)g , g}_{\fock_2}\, .
\end{equ}
\end{lemma}

Assuming the previous lemma holds, there exists a constant $c>1$ independent of $n$ 
such that~\eqref{e:LB2} is bounded below by 
\begin{equs}[e:LB3]
\frac{2}{\la^2 } \sup_{g\in \fock_2}\Big\{& 2\qvar{\lambda_{N,1} \fn^N_\phi, g}_{\fock_2} - \qvar{g,(\la-c\cL_0)g }_{\fock_2}  \Big\}\\
&=\frac{2}{\la^2 } \sup_{g\in \fock_2}\Big\{ \qvar{\lambda_{N,1} \fn^N_\phi, g}_{\fock_2} + \qvar{\lambda_{N,1} \fn^N_\phi - (\la-c\cL_0)g, g}_{\fock_2}  \Big\}\,.
\end{equs}
Now, in order to prove~\eqref{e:UpperLowerB}, it suffices to exhibit {\it one} $g$ for which the lower bound holds, 
and we choose it in such a way that the second scalar product in the supremum is $0$, 
i.e. we pick $g=\mathfrak{g}$, the latter being the unique solution to
\begin{equ}[e:NewPoissonLB]
\lambda_{N,1} \fn^N_\phi - (\la-c\cL_0)\mathfrak{g}=0\,.
\end{equ}
Notice that, by~\eqref{e:gensf}, $\mathfrak{g}$ has an explicit Fourier transform which is given by 
\begin{equ}
\hat{\mathfrak{g}}(k_{1:2})= \lambda_{N,1} \frac{\hat \fn_\phi(k_{1:2})}{\la +\frac{c}2|k_{1:2}|^2}\,.
\end{equ}
Plugging  $\mathfrak{g}$ into~\eqref{e:LB2} we obtain a lower bound of the type 
\begin{equs}[e:LB5]
\frac{2}{\la^2} &\qvar{\l \fn^N_\phi, \mathfrak{g}}_{\fock_2} =\frac{2\lambda_{N,1}^2}{\la^2} \int_{\R^4} |k_1|^2|k_2|^2\frac{|\hat \fn_\phi(k_{1:2})|^2}{\la +\frac{c}2|k_{1:2}|^2}\dd k_{1:2}\\
&=\frac{2}{\la^2} \int_{\R^2}\dd k |\phi_k|^2\Big(\lambda_{N,1}^2 \int_{\R^2}\dd k_{2} |k-k_2|^2|k_2|^2\frac{|\mathcal{K}_{k-k_2,k_2}^{N}|^2}{\la +\frac{c}2(|k-k_2|^2+|k_{2}|^2)}\Big)
\end{equs}
which is fully explicit and we are left to consider the inner integral. 
To do so, recall the definition of $\mathcal{K}^N$ in~\eqref{e:nonlintorusCoeff}. 
We restrict the integral over $k_2$ to the sector 
\begin{equ}
\CC^N_k\eqdef\{k_2\colon \theta_{k_2}\in \theta_k+(\pi/6, \pi/3)\quad\&\quad N/3\geq |k_2|\geq (2|k|)\vee 2/N\quad\&\quad |k|\leq \sqrt{N}\}
\end{equ}
where, for $j\in\R^2$, $\theta_j$ is the angle between the vectors $j$ and $(1,0)$. 
It is not hard to see that, on $\CC_k$, we have 
\begin{equs}
|\mathcal{K}_{k-k_2,k_2}^{N}|^2&=\frac{1}{2\pi} (\molf_{k-k_2,k_2})^2  \frac{|(k-k_2)^\perp\cdot k|^2|k_2\cdot k|^2}{|k_2|^4|k-k_2|^4}=\frac{1}{2\pi} (\molf_{k-k_2,k_2})^2\frac{|k_2\cdot k^\perp|^2|k_2\cdot k|^2}{|k_2|^4|k-k_2|^4}\\
&=\frac{1}{2\pi} (\molf_{k-k_2,k_2})^2\frac{|k|^4}{|k-k_2|^4} |\cos(\theta-\theta_k)|^2|\cos(\theta-\theta_{k^\perp})|^2\geq c_\rho \frac{|k|^4}{|k_2|^2|k-k_2|^2}
\end{equs}
for a constant $c_\rho$ depending only on $\rho$ but neither on $k$ nor $N$. 
In the last step, we used that by assumption~\eqref{ass:rho} on $\rho$, $|\hat\rho^N|$ is bounded below on 
$[2/N,N/2]$ by a constant independent of $N$ and that on $\CC^N_k$ we have 
\begin{equ}
\tfrac2N\leq |k|,|k_2|,|k-k_2|\leq \tfrac{N}2\,,\qquad\text{and}\qquad\tfrac32|k_2|\geq |k-k_2|\geq \tfrac12 |k_2|\,.
\end{equ}
Hence, the right hand side of~\eqref{e:LB5} is lower bounded, 
modulo a multiplicative constant only depending on $\rho$, by 
\begin{equ}[e:LB6]
\frac{2}{\la^2} \int_{2/N\leq|k|\leq\sqrt N}\dd k |k|^4 |\phi_k|^2\Big(\lambda_{N,1}^2 \int_{\CC^N_k} \frac{\dd k_{2}}{\la +|k_{2}|^2}\Big)\,.
\end{equ}
It remains to treat the quantity in parenthesis, for which we pass to polar coordinates and obtain
\begin{equ}
\lambda_{N,1}^2 \int_{\CC^N_k} \frac{\dd k_{2}}{\la +|k_{2}|^2}\geq\lambda_{N,1}^2 \int_{2\sqrt{N}}^{N/3}\frac{\rho\dd \rho}{\kappa +\rho^2}=\frac{\lambda}{2\log N}\log\Big(\frac{\kappa+N^2/9}{\kappa+4N}\Big)\gtrsim1\,.
\end{equ}
In conclusion, we have shown that for $N$ large enough
\begin{equ}[e:LowerBN]
\int_0^{\infty} e^{-\la t} \Ex \Big[ \Big| \cB^N_t(\phi) \Big|^2 \Big] \dd t \gtrsim \frac{1}{\la^2} \int_{2/N\leq|k|\leq\sqrt N}\dd k |k|^4 |\phi_k|^2\,,
\end{equ}
and it remains to pass to the limit as $N\to\infty$. Now, thanks to~\eqref{e:UpperB} and tightness of $\cB^N$, 
we can apply dominated convergence to the left hand side, while the integral at right hand side clearly converges to $\|\phi\|_{\dot H^2(\R^2)}^2$, so that the proof is completed. 
\end{proof}

\begin{proof}[of Lemma~\ref{l:OpBounds}]
We will exploit the Fourier representation of the operators $\ngenap$ and $\ngenam$ in 
Proposition~\ref{p:SNSstraightoperators}, which though still need to be symmetrised.
Let $\mathfrak{a}_+^N$ be the operator defined by the right hand side of~\eqref{e:genapf} and $S_3$ the 
set of permutations of $\{1,2,3\}$. 
Then, 
\begin{equs}[e:offdiags]
    	\qvar{ g, \ngenam &(\la - \ngensy)^{-1} \ngenap g }_{\fock_2} =\qvar{\ngenap g, (\la - \ngensy)^{-1} \ngenap g }_{\fock_3} \\ 
      &= \sum_{s,\bar{s} \in S_3} \int \frac{|k_1|^2 |k_2|^2 |k_3|^2 }{\la + \half |k_{1:3}|^2} \mathcal{F}(\mathfrak{a}_+^N g) (k_{s(1):s(3)} ) \mathcal{F}(\mathfrak{a}_+^N g) (k_{\bar s(1): \bar s (3)}) \dd k_{1:3} \\
      &\lesssim \int \frac{|k_1|^2 |k_2|^2 |k_3|^2 }{\la + \half |k_{1:3}|^2} \mathcal{F}(\mathfrak{a}_+^N g) (k_{1:3} )^2 \dd k_{1:3}
\end{equs}
where in the last step we simply applied Cauchy-Schwarz inequality. 
Now, we bound $|\mathcal{K}_{k_1,k_2}^{N}|\leq \molf_{k_2} |k_1+k_2|^2/(|k_1||k_2|)$ so that the right 
hand side above can be controlled via 
\begin{equs}[e:A+Bound]
\lambda_{N,1}^2 \int_{\R^6}  &\hat{g} (k_1 +k_2 , k_3 )^2   \hat\rho_{k_2}\frac{|k_3|^2 |k_1 + k_2|^4 }{\la + \half |k_{1:3}|^2 } \dd k_{1:3}\\
&\lesssim \int_{\R^4}  \dd k_{1:2}\Big(\prod_{i=1}^2|k_i|^2\Big)|k_1|^2|\hat{g} (k_1 , k_2 )|^2 \Big(\lambda_{N,1}^2\int_{\R^2} \frac{\hat\rho_{j} \dd j }{\la + |j|^2 } \Big)\\
&\lesssim \int_{\R^4}  \dd k_{1:2}\Big(\prod_{i=1}^2|k_i|^2\Big)|k_1|^2|\hat{g} (k_1 , k_2 )|^2\\
&=\frac12 \int_{\R^4}  \dd k_{1:2}\Big(\prod_{i=1}^2|k_i|^2\Big)(|k_1|^2+|k_2|^2)|\hat{g} (k_1 , k_2 )|^2=\qvar{(-\CL_0) g, g}_{\fock_2}
\end{equs}
where the second step follows by the fact that $\hat\rho_j\leq \1_{|j|\leq N}$ 
and the definition of $\lambda_{N,1}$ in~\eqref{e:CouplingConstant}, while the last by the symmetrisation of the integral. 

We now turn to the other term, which is 
\begin{equs}
&\qvar{ g, \ngenap (\la - \ngensy{0})^{-1} \ngenam g }_{\fock_2} \\
&=\qvar{\ngenam g, (\la - \ngensy)^{-1} \ngenam g }_{\fock_1}\lesssim\lambda_{N,1}^2  \int \frac{|k|^2}{\la + \half |k|^2 }  \F(\ngenam g)(k)^2  \dd k\\
&=\lambda_{N,1}^2\int_{\R^2} \dd k\frac{|k|^2}{\la + \half |k|^2 } \Big(\int_{\R^2} \molf_{p,k-p} \frac{(k^{\perp} \cdot p)(k\cdot (k-p))}{|k|^2} \hat g(p,k-p)\dd p\Big)^2\\
&\lesssim \lambda_{N,1}^2 \int_{\R^2} \dd k \Big(\int_{\R^2} \molf_{p} |p||k-p|  \hat g(p,k-p)\dd p\Big)^2\,.
\end{equs}
We now multiply and divide the integrand by $|p|$ and apply Cauchy-Schwarz, so that 
we obtain an upper bound of the form
\begin{equ}
\Big(\int_{\R^4} \dd k_{1:2}\Big(\prod_{i=1}^2|k_i|^2\Big)|k_1|^2\hat{g} (k_1 , k_2 )|^2\Big)\Big(\lambda_{N,1}^2\int_{\R^2}(\molf_{p})^2 \frac{\dd p}{|p|^2}\Big)
\end{equ}
from which~\eqref{e:MainOpBound} follows arguing as in~\eqref{e:A+Bound}. 
\end{proof}

\subsection{Triviality of the fractional vorticity equation for $\theta<1$}\label{s:triv}

In this last section, we complete the proof of Theorem~\ref{thm:theta<1} and show that the rescaled solution of the 
regularised fractional vorticity equation for $\theta\in(0,1)$ converges to the fractional stochastic heat equation 
obtained by simply setting the coupling constant $\lambda$ in~\eqref{e:vsnsN} to $0$. 

For the proof, recall that $\w$ is a stationary (analytically) weak solution of~\eqref{e:fracSHE} if 
for all $\phi\in\cS(\R^2)$, $\w$ satisfies
\begin{equ}
\w_t(\phi)=\mu(\phi)+\int_0^t \w_s(-(-\Delta)^\theta\phi)\dd s+\int_0^t \xi(\dd s, (-\Delta)^{\frac{1+\theta}{2}}\phi)
\end{equ}
where $\mu$ is the Gaussian process whose covariance is given by~\eqref{e:mu}. 
It is not hard to see that $\w$ admits a unique stationary weak solution. 
This is the only tool we need for the proof, which is then a simple corollary of Theorem~\ref{t:tightvsnsN}. 

\begin{proof}[of Theorem~\ref{thm:theta<1}]
For $N\in\N$, let $\wN$ be a stationary weak solution to~\eqref{e:vsnsN}, i.e. for all $\phi\in\cS(\R^2)$ 
$\wN$ satisfies
\begin{equ}
\omega_t^N(\phi)-\mu(\phi)=\half \int_0^t \omega_s^N(-(-\Delta)^\theta\phi)\dd s +\cB^N_t(\phi) -
	\int_0^t \xi(\dd s, (-\Delta)^{\frac{1+\theta}{2}}\phi)\,,
\end{equ}
where $\cB^N$ is defined according to~\eqref{e:IntNonlin}. 
By Theorem~\ref{t:tightvsnsN}, the sequence $(\wN,\cB^N)$ is tight in the space $C([0,T],\cS'(\R^2))$ 
and, thanks to~\eqref{e:LimNtheta}, $\cB^N\to 0$ as $N\to\infty$. Hence, it immediate 
to verify that every limit point of $\wN$ is a weak stationary solution 
of~\eqref{e:fracSHE}. Since the latter is unique, the result follows at once. 
\end{proof}

\bibliography{library}
\bibliographystyle{Martin}
\end{document}